\documentclass[11pt]{article}
\usepackage{times,dsfont,ifthen,mathrsfs,latexsym,amsfonts,color}
\usepackage{amsmath,amsthm,makeidx,fontenc,amssymb,bm,graphicx,psfrag,listings,curves,extarrows,enumitem}
\usepackage[
linktocpage=true,colorlinks,citecolor=magenta,linkcolor=blue,urlcolor=magenta]{hyperref}

\usepackage{cite}
\usepackage{geometry}
\usepackage{indentfirst}
\usepackage{amssymb}
\makeatletter

\newcommand{\Rmnum}[1]{\expandafter\@slowromancap\romannumeral #1@}
\makeatother

\usepackage[showonlyrefs]{mathtools}  
\mathtoolsset{showonlyrefs=true}

\newtheorem{theorem}{Theorem}[section]
\newtheorem{lemma}{Lemma}[section]

\newtheorem{proposition}{Proposition}[section]
\newtheorem{remark}{Remark}[section]

\newcommand{\R}{{\mathbb R}}
\newcommand{\RN}{{\mathbb R^n}}  
\newcommand{\bean}{\begin{eqnarray*}}
	\newcommand{\eean}{\end{eqnarray*}}

\newcommand{\sbr}[1]{\left(#1\right)}
\newcommand{\mbr}[1]{\left[#1\right]}
\newcommand{\lbr}[1]{\left\{#1\right\}}

\newcommand{\abs}[1]{\left\lvert#1\right\rvert}

\newcommand{\dx}{ ~\mathrm{d} x}
\newcommand{\nm}[1]{\Vert #1 \Vert}

\setitemize{itemindent=38pt,leftmargin=0pt,itemsep=-0.4ex,listparindent=26pt,partopsep=0pt,parsep=0.5ex,topsep=-0.25ex}

\numberwithin{equation}{section}

\begin{document}
	\theoremstyle{plain}

	\title{\bf    Uniqueness of bound states to the logarithmic  Schr\"odinger  equation
	}  
	
	\date{}
	
	\author{Tianhao Liu\thanks{ School of Mathematics, Statistics and Mechanics, Beijing University of Technology, Beijing 100124, China.
			Email: liuthmath@gmail.com},\;\;		
		Xin Sun\thanks{Department of Mathematical Sciences, Tsinghua University, Beijing 100084, China. 
			Email: Sun\_Xin2001@outlook.com},\;\;         
		Wenming Zou \thanks{Department of Mathematical Sciences, Tsinghua University, Beijing 100084, China.  Email: zou-wm@mail.tsinghua.edu.cn}
	}

	\maketitle
	\begin{center}
		\begin{minipage}{140mm}
			\begin{center}{\bf Abstract}\end{center}	\small 
			This paper studies the uniqueness of   bound states for the problem
			\begin{equation} 
				\Delta u + u\log  u ^2=0, \quad  u\in  H^1(\RN), \quad  n\geq 2,
			\end{equation}
			which arises from the logarithmic  Schr\"odinger   equation. We prove that for every  integer  $k\geq 1$, there exists  a  unique   radial solution $u(r)=u(|x|)$  that has exactly $k$ simple zeros for $r>0$.
			This resolves an open problem posed by Troy [{Arch. Ration. Mech. Anal.} 222 (2016), 1581--1600] and confirms the Berestycki-Lions conjecture  for the logarithmic nonlinearity. The proof combines the shooting method with suitable auxiliary functions introduced by Tang [{Invent. math.}  243 (2026), 245--291]. A major difficulty arises from the singular behavior of   the nonlinearity $f(u)=u \log u^2$ at origin. We overcome it by establishing  asymptotic convergence and sharp decay rates  at infinity for  any ground state or bound state. More precisely, every such solution satisfies
			\begin{equation}
				\lim_{r\to\infty}\frac{  u'(r)}{u(r) \sqrt{\abs{ \log u^2(r)}}}=\lim_{r\to\infty}\frac{u'(r)}{ru(r)} = -1, \quad \limsup_{r\to\infty}|u(r)|e^{(\frac12-\epsilon)r^2}<\infty, ~~\forall \epsilon \in ( 0,\frac{1}{2}).
			\end{equation}
			These asymptotic behaviors are of independent interest and may be useful for other problems involving logarithmic  nonlinearities. 
			\vskip0.13in
			
			{\bf Keywords:}  Logarithmic  Schr\"odinger  equation; Bound state; Uniqueness; Shooting method; Asymptotic  behavior
			\vskip0.1in
			{\bf MSC Classification:}  35B05, 34B40, 35J66, 35Q51
			
			\vskip0.23in					
		\end{minipage}
	\end{center}

	\section{Introduction}\label{SectionIntroduction}
	In this paper, we investigate  the uniqueness of  solutions for the following problem  \begin{equation} \label{equlog} 
		\Delta u + u\log  u ^2=0, \quad  u\in  H^1(\RN), \quad n\geq 1.
	\end{equation}
	A positive solution  $u\in H^1(\RN)$ of \eqref{equlog}  is called a ground state, while a sign-changing radial solution  $u\in H^1(\RN)$ is referred to as a bound state. It   is known from \cite{Bialynicki1976,Bialynicki1979} that there exists a ground state which is radially symmetric and positive in $\RN$, the so-called Gausson:
	\begin{equation}\label{Gausson}
		u_0(x)= \exp\sbr{-\frac{|x|^2}{2}+\frac{n}
			{2}}  , \quad  n\geq 1.
	\end{equation}

	\vskip 0.04in

	The  Gausson \eqref{Gausson} plays a central role in applications of the   time-dependent   logarithmic Schr\"{o}dinger   equation
	\begin{equation} \label{E2}
		i\frac{ \partial  \psi}{\partial t} =\Delta \psi  +\psi \log|\psi|^2, \quad  ~~\psi=\psi(x,t)\in\mathbb{C},  \quad  ~~(x,t)\in\RN\times\R^+,
	\end{equation} 
	which was    first introduced  in the seminal work of Bialynicki-Birula and Mycielski  \cite{Bialynicki1976} to describe   the separability of non-interacting subsystems in quantum mechanics. Since then,  it has attracted  considerable attention due to its applications in diverse areas of physics,   such as quantum mechanics, quantum optics, transport and diffusion
	phenomena, information theory, quantum gravity, and the theory
	of Bose-Einstein condensates (see \cite{TroyARMA2018}   and the references therein).  Moreover, as shown in \cite{TroyARMA2018},   equation \eqref{equlog} can be formally derived both   as the limit $p\to 1^+$  of the classical reaction-diffusion equation 
	\begin{equation}
		\frac{ \partial  u}{\partial t} =\Delta u+ |u|^{p-1}u- u,
	\end{equation}
	and also from the nonlinear Klein-Gordon equation
	\begin{equation}
		\frac{ \partial^2  u}{\partial t^2}=\Delta u + |u|^{p-1}u- u.
	\end{equation}

	\vskip 0.04in
	
	It is well known that, after a suitable scaling, the ground state of \eqref{equlog} serves as an extremal function for the logarithmic Sobolev inequality (see \cite[Theorem 8.14]{lieb-loss}):
	\begin{equation}\label{log Sobolev ineq}
		\int_{\RN} u^2 \log u^2 \dx\leq \frac{a^2}{\pi} \int_{\RN} |\nabla u| ^2   \dx+\mbr{\log\|u\|_{L^2(\RN)} ^2 -n(1+\log a)} \|u\|_{L^2(\RN)} ^2,
	\end{equation}
	which holds for any   function  $ u\in H ^1(\RN) $ with  $n\geq 2$  and any constant $ a>0$. In this respect, the uniqueness of ground states is essential  not only for the equation \eqref{equlog} itself but also for the sharpness of the logarithmic Sobolev inequality. A classical version involving the Gauss measure was first studied by Stam \cite{Stam} and further by Gross \cite{Gross-AJM1975}; we also refer to \cite{pino-jmpa2002,Beckner-1992,Brigati-jfa2024} for related developments.

	\vskip 0.12in

	The uniqueness of the ground state for $n\geq 1$ has been extensively investigated. For $n\geq 3$,   d'Avenia, Montefusco, and Squassina  \cite{avenia2014}  used the moving plane method \cite{GNN} to prove that any positive solution of \eqref{equlog} is radially symmetric about some point.  Combining this  with the uniqueness result of Serrin and Tang  \cite{SerrinTang2000}, they  concluded that positive solution $u_0(x)$   is the unique   ground state of \eqref{equlog} for  $n\geq 3$.    Independently, Troy \cite{TroyARMA2018} developed a novel comparison method, which combines  $u_0(r)$
	with energy estimates and associated Ricatti equation estimates, to establish the uniqueness  of ground state for    $2\leq n\leq 9$. The situation  $n=1$ is straightforward, because equation \eqref{equlog} is one-dimensional and then has the first integral 
	\begin{equation}
		\frac{{u'}^2 }{2}+  \frac{1}{2} u^2\sbr{\log  u ^2-1} =c,  
	\end{equation}
	where $c$ is a constant.  Since $u\in H^1(\R)$, we infer  from the asymptotic condition  $	(u(r),u'(r)) \to (0,0) $ as $ r\to \infty $ that  $c=0$. Then   it is straightforward to verify that $u_0$ is the unique ground state to \eqref{radialform}.

	\vskip 0.08in
	The existence of infinitely many  bound states of \eqref{equlog} was first established by d'Avenia et al. \cite{avenia2014} using variational methods.    For the  non-autonomous case we refer to \cite{alves-ji-cvpde2020,ji-szulkin2016,ss-2015,tanaka-zhang-cvpde,WangZhangZhang-ADE2024,Wang=ARMA=2019,LiuWeiZou2025,W.ShuaiNonlinearity2019} and the references therein.  In particular, by using the Nehari's ODE technique \cite{Nehari},  Shuai \cite{W.ShuaiNonlinearity2019}  proved  that for  any  $n\geq 2$,   problem  \eqref{equlog}   admits   sign-changing radial solutions $u(r)= u(|x|)$ with any prescribed number of zeros for $r>0$. These solutions are  high-energy bound states,  and satisfies the  initial value problem
	\begin{equation}\label{radialform}
		\begin{aligned} 
			&	u'' +\frac{n-1}{r}u'+ u \log u^2=0,  \quad r>0, \quad  n\geq 2,
			\\ & u(0)=\alpha>0, \quad 	u'(0)=0,
		\end{aligned}			
	\end{equation}
	with  the asymptotic condition
	\begin{equation}\label{decayassum}
		(u(r),u'(r)) \to (0,0)\quad \text{ as } r\to \infty,
	\end{equation}
	where throughout this paper,    $'$ denotes differentiation with respect to $r$. 
	
	\vskip 0.08in
	In \cite[Section 3]{TroyARMA2018}, Troy proposed the following open problem, which remains unresolved:
	\vskip 0.12in
	\noindent\textbf{Troy's open problem \cite{TroyARMA2018}:} \textit{When $n>1$ are sign-changing solutions of \eqref{radialform}--\eqref{decayassum} with prescribed numbers of zeros unique?  }

	\vskip 0.12in
	This uniqueness problem  is part of a broader investigation into radial solutions of semilinear elliptic equations. Indeed, over the past few decades, the   uniqueness of radial solutions to semilinear elliptic equations of the form
	\begin{equation}\label{equgeneral}
		\Delta u + f(u) = 0, \quad u \in H^1(\mathbb{R}^n),
	\end{equation}
	has been intensively studied  for a wide class of nonlinearities $f$.  A classical result by Gidas, Ni and Nirenberg \cite{GNN} shows that under suitable assumptions on $f$, all positive  ground states are radially symmetric. The existence problem for ground states and bound states has been extensively treated for various nonlinearities in the literature; see, for instance, \cite{MTW,Cortazar2015,Berestycki-1,Berestycki-2,JK,S1977,Ryder,Nehari1963}. 
	The study of the uniqueness of the ground state was initiated by Coffman \cite{Coffman1972}, who proved the uniqueness of the ground state for the  equation 
	\begin{equation}\label{equcubic}
		\Delta u -u+u^3=0, \quad u \in H^1(\mathbb{R}^3).
	\end{equation}  This line of research was subsequently extended by McLeod and Serrin \cite{MS1981,MS1987} to more general nonlinearities, including the following power-type case,  
	\begin{equation}\label{equpower}
		\Delta u -u+|u|^{p-1}u=0, \quad u \in H^1(\mathbb{R}^n),
	\end{equation}
	where they proved the uniqueness  of the ground state for  the following parameter ranges:
	\begin{equation}
		1<p<\infty, ~ \text{ for } ~  n=1,2; \quad  1<p\leq {n}/(n-2), ~ \text{ for } ~ n=3,4;  \quad  1<p\leq  {8}/{n}, ~ \text{ for } ~ 4<n<8.
	\end{equation}
	Subsequently, Kwong \cite{Kwong1989}   proved the uniqueness of   ground state solutions of  \eqref{equpower}  for all    admissible parameters, namely  
	\begin{equation}\label{ass1}
		1<p<\infty, ~ \text{ for } ~  n=1,2, \quad  \quad   1<p< (n+2)/(n-2), ~\text{ for } ~ n\geq3.  
	\end{equation}
	For the case $p \geq (n+2)/(n-2)$ with $n\geq 3$, the  Pohozaev identity   \cite{pohozaev1965} implies that \eqref{equpower} admits no nontrivial solutions.   Since a comprehensive review of the literature is beyond the scope of this work, we refer the reader to \cite{FrankSummer,SerrinTang2000,Kwong-Zhang-DIE1991,CL1991,CEF1998,CJ1993,FL2013,FLS2016,KL1992,M1993,OS1999,Y1991,ET1997,FLS1996,PS1983,PS1998,Tao2006,N1983} and the references therein for further results on the uniqueness of the ground state. We also refer to \cite{PS1983,NN1985,DDG2013} for results on the non‑uniqueness of ground states.

	\vskip 0.1in
	In contrast, the uniqueness of sign-changing bound states is less understood, and the related literature remains limited. We define a \textit{$k$-node bound state}  $u\in H^1(\RN)$ as a nodal (i.e., sign-changing) radial  solution of  \eqref{equgeneral} with exactly $k$ zeros. In the celebrated works of Berestycki and Lions \cite{Berestycki-1,Berestycki-2}, the authors proposed a conjecture on the
	characterization of sign-changing bound states.
	
	\vskip 0.12in
	\noindent\textbf{ The conjecture of Berestycki and Lions \cite{Berestycki-1,Berestycki-2}:} \textit{We conjecture that, at least for some classes of $f$'s, there is one and exactly one solution of  \eqref{equgeneral} that has precisely $k- 1$ nodes (i.e., simple zeroes)  for $r\in \sbr{0,\infty}$. }  
	
	\vskip 0.12in
	
	This conjecture remains open for most nonlinearities, but significant progress has been achieved on several specific models \cite{Cohen2024,Troy2005proc,AoWeiYao2016,TangInvent2026,Cortazar2011,Cortazar2009}. The intricate influence of  $f$ on the solution structure is often subtle and nontrivial. For the cubic nonlinearity $f(u)=-u+u^3$,   Cohen, Li and Schlag \cite{Cohen2024} proved the uniqueness of the first twenty bound states to \eqref{equcubic}  via rigorous numerical analysis, thereby making substantial progress on a long-standing  open problem raised by Hastings and McLeod \cite{Hastings2012}. For the  power-type nonlinearity  $f(u)=-u+|u|^{p-1}u$,    Ao, Wei and Yao \cite{AoWeiYao2016} proved that the 1-node bound state  to \eqref{equpower} is unique when the exponent $p$ is sufficiently close to the critical exponent $(n+2)/(n-2)$.    In a recent breakthrough, Tang \cite{TangInvent2026} completely resolved the Berestycki--Lions conjecture for the power-type nonlinearity satisfying \eqref{ass1} when $n\geq 3$, proving the uniqueness of bound states to \eqref{equpower} with any prescribed number of zeros. On the other hand, for the piecewise linear nonlinearity, 
	\begin{equation}\label{piecewise}
		f(u)=\begin{cases}
			u+1,  &\quad u\leq -1/2,\\
			-u,    &\quad  u\in \sbr{-1/2,1/2},\\
			u-1,    &\quad  u\geq 1/2,\\
		\end{cases}
	\end{equation}
	Troy  \cite{Troy2005proc}  proved the existence and uniqueness of the  1-node bound state  for $n=3$.  The uniqueness of $k$-node bound states for arbitrary $k\in\mathbb{N}$ was resolved by  Cort\'azar,   Garc\'ia-Huidobro and  Yarur \cite{Cortazar2011}.  In fact, the authors of \cite{Cortazar2011} considered 
	the more general equation \eqref{equgeneral} under the following 
	assumptions on $f$:
	\begin{itemize}[ 
		itemindent = 0pt,
		labelindent = \parindent,
		labelwidth = 2em,
		labelsep = 5pt,
		leftmargin = *]\vskip0.06in
		\item[($f_1$)]  $f$ is odd, $f (0) = 0$, and there exist $\beta>b>0$ such that $f (s)>0 $  for $s>b$, $f (s)\leq 0$, $f (s) \not\equiv 0$ for $s\in \mbr{0,b}$, $F(\beta)=0$, and $\lim_{s\to \infty}F (s) =\infty $, where $ F(s)=\int_{0}^{s} f(t) ~\mathrm{d} t$;\vskip0.06in
		\item[($f_2$)] $f$ is continuous in $[0,\infty)$, continuously differentiable in $(0,\infty)$ and  $f'\in L^1(0,1) $;
		\vskip0.06in
		\item[($f_3$)]$\sbr{  \frac{F}{f}}'(s)\geq \frac{n-2}{2} $ for all $s\geq \beta$.\vskip0.06in
	\end{itemize}
	They proved that for  $2\leq n\leq 4$ and any $k\in \mathbb{N}$, there exists at most one radial solution  $u(r)=u(|x|)$  which has exactly $k$ sign changes in  $\sbr{0,\infty}$. These assumptions ($f_1$)--($f_3$) are satisfied by several concrete nonlinearities.
	For instance, the  piecewise linear nonlinearity \eqref{piecewise} for $n=3$; the combined nonlinearity $f(u)=-|u|^{q-1}u+|u|^{p-1}u$ ($0<q<p$), with no further restriction on $p,q$ for $n=2$, and $p^2+q^2\le 1$ for $n=3$; and the logarithmic nonlinearity $f(u)=u\log u^2$ for $n=2$.  
	
	\vskip 0.12in
	
	Our main result is the following. It provides a complete resolution of the Berestycki--Lions conjecture for the logarithmic nonlinearity, and also resolves Troy's open problem.

	\begin{theorem}\label{Thm}
		Let $n\geq 2$. Then for each positive integer $k\geq 1$, there exists a unique bound state $u=u(|x|)$ of
		\eqref{equlog} with
		precisely $k$ zeros for  $|x|>0$, up to translation and reflection.

	\end{theorem}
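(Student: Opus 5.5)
The plan is to work with the radial problem \eqref{radialform} and use the shooting parameter $\alpha=u(0)>0$. Existence of a $k$-node solution satisfying \eqref{decayassum} is already known from Shuai \cite{W.ShuaiNonlinearity2019}. Radial symmetry holds by the definition of a bound state, and oddness of $f$ accounts for reflection. So only uniqueness needs proof.

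\textbf{Step 1: decay at infinity.} For any solution satisfying \eqref{decayassum}, I will first prove the sharp asymptotics stated in the abstract. Near $u=0$ the equation reads $u''+\frac{n-1}{r}u'=-u\log u^2$, which is a superlinear ``repulsive'' potential $|\log u^2|$. Set $w=u'/u$ beyond the last zero. It satisfies the Riccati equation
\[
w'+w^2+\tfrac{n-1}{r}w=-\log u^2 .
\]
Comparison with the Gausson ($w=-r$) and energy estimates should give $w\sim-\sqrt{|\log u^2|}$ and $w/r\to-1$. Integrating then yields $|u|\le Ce^{-(\frac12-\epsilon)r^2}$. The same argument gives the corresponding asymptotics for the variation $\phi=\partial u/\partial\alpha$: it either decays like $u$ or grows.

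\textbf{Step 2: Tang's auxiliary functions.} Let $u(r,\alpha)$ solve \eqref{radialform} and $\phi=\partial_\alpha u$, so that
\[
\phi''+\tfrac{n-1}{r}\phi'+(\log u^2+2)\phi=0,\qquad \phi(0)=1,\ \phi'(0)=0 .
\]
Following Tang \cite{TangInvent2026}, I introduce a Wronskian-type auxiliary function
\[
W(r)=r^{n-1}\bigl(a(r)(u'\phi-u\phi')+b(r)u'\phi'+c(r)u\phi\bigr),
\]
built from the scaling generator $ru'+\gamma u$ and from functions adapted to $F(u)=\tfrac12u^2(\log u^2-1)$. The key fact for the logarithmic nonlinearity is the identity $uf'(u)-f(u)=2u$. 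This makes the commutator of the linearized operator with the dilation $ru'$ particularly simple: $L(ru')=-2f(u)$ and $L(u)=2u$. The aim is to show, zone by zone between consecutive zeros of $u$ and $u'$, that $\phi$ has exactly one zero in each nodal interval and that $\phi$ is unbounded (say $\phi u<0$ with $|\phi|\to\infty$) whenever $u(\cdot,\alpha)$ is a $k$-node bound state. This nondegeneracy and transversality statement means the set of $\alpha$ giving $k$-node bound states consists of isolated points at which the number of zeros changes monotonically.

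\textbf{Step 3: global shooting argument.} Define $N(\alpha)$ as the number of zeros of $u(\cdot,\alpha)$, and classify $\alpha$ according to whether $u$ crosses, decays, or stays bounded away from $0$ after its $k$-th zero. By continuous dependence, the sets of $\alpha$ with at least $k+1$ zeros and with exactly $k$ zeros followed by non-decay are open. Transversality from Step 2 shows the $k$-node bound-state values separate these sets in a strictly monotone way as $\alpha$ increases. Hence each $k$ is attained at exactly one $\alpha_k$. I must also handle $\alpha\le e^{1/2}$, where there is no oscillation since $F(\alpha)\le0$, and use Step 1 to justify passage to the limit at $r=\infty$.

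\textbf{Main obstacle.} I expect the main difficulty to be Step 1 together with the boundary terms at infinity in Step 2. Because $f'(u)=\log u^2+2\to-\infty$ as $u\to0$, the linearized equation is singular, and the standard exponential-decay comparisons used for $-u+|u|^{p-1}u$ fail. Showing that $W(r)\to0$, or has a definite sign, as $r\to\infty$ requires the precise rate $u'/u\sim-r$ and matching control of $\phi'/\phi$. My first approach will be a Riccati comparison against the Gausson profile with slightly perturbed constants $(1\pm\epsilon)$, which should give these limits.
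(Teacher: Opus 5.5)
Your architecture matches the paper's. It has three parts: sharp Gaussian asymptotics at infinity; Tang-type auxiliary functions that locate the zeros of $v=\partial_\alpha u$ and force $|v|\to\infty$ at bound states; and a counting argument for $\mathcal{N}(\alpha)$. In the counting argument, zeros move left because $z'(\alpha)=-v/u'<0$, there is a jump at bound states by McLeod's comparison, and a pause at oscillatory solutions. Your identities $L(u)=2u$ and $L(ru')=-2f(u)$ are exactly what give Tang's pairings $M=r^{n-1}(u'v-uv')$ and $Q=-r^{n-1}((ru')'v-ru'v')$ their derivatives $M'=2r^{n-1}uv$ and $Q'=2r^{n-1}f(u)v$. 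Steps 1 and 3 are fine in outline. The gap is Step 2. It is the entire uniqueness proof, and you only state its conclusion, using a generic functional $W$ with unspecified coefficients that is meant to work uniformly for all $n\ge 2$.

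\emph{What is missing.} You need an invariant that survives passage through each phase $[c_{i-1},c_i]$ between consecutive critical points of $u$. In the paper this is $Q,M,T_2>0$ at $c_{i-1}$, with $T_2=Q-(\log u^2-1)M$. Within a phase, the argument runs as follows.
\begin{itemize}
\item A Wronskian argument places the first zero $\tau_i$ of $v$ before $|u|=1$.
\item The bridge function $B_0=Q-2F(u)r^{n-1}v/u'$ then does the work. It satisfies $B_0'=-2F(u)Q_n/(ru'^2)$ with $Q_n=Q+nr^{n-1}u'v$. At a zero of $Q_1=Q+r^{n-1}u'v$ one has $B_0=-2r^{n-1}vE/u'<0$, since $u'v>0$ and $E>0$. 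This gives $Q_1>0$ past $\tau_i$ and excludes a second zero of $v$.
\item Restoring $T_2>0$ at $c_i$ requires comparing the branches $|u|=\mu$ on both sides of $z_i$. That needs $Q_n>0$ on $[b_i,c_i]$, where $|u(b_i)|=e^{1/2}$, or a substitute.
\end{itemize}

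\emph{Where the dimension enters.} The contradiction argument for $Q_n>0$ yields $\frac{2(n-2)}{n}<\log u^2(\theta_i)\le\log u^2(b_i)=1$, which is impossible only for $n\ge 4$. For $n=3$ the paper needs an extra weighted-integral estimate. For $n=2$ it does not run this machinery for nodal solutions at all; it invokes Cort\'azar--Garc\'ia-Huidobro--Yarur instead. Their hypothesis $(F/f)'(s)\ge\frac{n-2}{2}$ for $s\ge e^{1/2}$ holds for $u\log u^2$ only when $n=2$: with $L=\log s^2\ge 1$, one has $(F/f)'=\frac12-\frac{1}{2L}+\frac{1}{L^2}$, which is positive but drops below $\frac12$ once $L>2$. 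So your uniform plan, as written, does not cover $n=2$ and gives no handle on the borderline case $n=3$.

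\emph{Divergence of $v$.} Finally, $|v|\to\infty$ at a bound state is not a by-product of Step 1. The linearized equation only gives that $v$ is eventually monotone. A finite limit is excluded because it would force $Q\to 0$ and $B_0\to 0$, contradicting $\lim_{r\to\infty}B_0>0$. That positivity is itself an output of the Step 2 sign analysis.
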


	Theorem \ref{Thm} is an immediate consequence of the following result, which provides a complete characterization of the solutions of \eqref{radialform}.    
	\begin{theorem}\label{theoremmain1}
		Let $n\geq 2$ and $u(r)$  be the solution of \eqref{radialform}. There exists a   sequence of initial data 
		\begin{equation}      \label{increasing prop}      e^{n/2}=\alpha_0<\alpha_1<\alpha_2<\cdots, \quad \text{ with }\quad  \lim_{k\to \infty} \alpha_k=\infty,
		\end{equation}
		such that,  for $u_k(r)$ denoting the solution with $u(0)=\alpha_k$,  the following statements hold.
		\begin{itemize}
			\medbreak
			\item[(i)] The Gausson $u_0(r) =\exp\sbr{-\frac{r^2}{2}+\frac{n}
				{2}} $ is the unique   ground state   of \eqref{radialform}.
			\medbreak
			\item[(ii)] For each $k\geq 1$, $u_k(r)$  is the unique bound state of \eqref{radialform} that has precisely $k$ (simple) zeros $0<z_1<z_2<\cdots<z_k<\infty$. Moreover,   $u_k(r)$ has a unique critical point $c_i$ in each $(z_i,z_{i+1})$ for $1\leq i \leq k-1$, and a unique critical point $c_k$ in $(z_k, \infty)$  beyond which $|u_k(r)|$ decreases strictly.   At each critical point, $|u_k|>e^{1/2}$. Furthermore, we have   the following asymptotic properties             
			\begin{equation} \label{decay1}
				\lim_{r\to\infty} \frac{  u_k'(r)}{u_k(r) \sqrt{|\log u_k^2(r)|}}=  \lim_{r\to\infty} \frac{  u_k'(r)}{ru_k(r)  }=-1 ,
			\end{equation}
			and  for any $ \epsilon \in \sbr{0,\frac{1}{2}}$,  
			\begin{equation} \label{decay2}
				\limsup_{r\to \infty}|u_k(r)|e^{(\frac{1}{2}-\epsilon)r^2}<\infty,\quad     \limsup_{r\to \infty}|u_k'(r)|r^{-1}e^{(\frac{1}{2}-\epsilon)r^2}<\infty.
			\end{equation} 
			\medbreak
			\item[(iii)]  If $\alpha = 1$, then $u(r) \equiv 1$. If $\alpha<\alpha_0$ and $\alpha \neq   1$, then $u(r) > 0$ in $\sbr{0,\infty}$ with $\inf u>0$ and $u(r)$ oscillates about $u \equiv 1$.
			\medbreak
			\item[(iv)]  If $\alpha \in \sbr{\alpha_k,\alpha_{k+1}}$ with $k\geq 0$, then   $u(r)$ is a nodal  solution
			with precisely $k + 1$ zeros, and oscillates about $u \equiv 1$ or $u \equiv -1$ behind its last zero.
		\end{itemize}
	\end{theorem}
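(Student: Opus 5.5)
The plan is a shooting argument in the initial value $\alpha=u(0)$, built on the energy
\[
E(r)=\tfrac12 u'^2+\tfrac12 u^2(\log u^2-1),\qquad E'(r)=-\tfrac{n-1}{r}u'^2\le 0 .
\]
The potential $F(u)=\frac12u^2(\log u^2-1)$ has minima at $u=\pm1$, vanishes at $u=0,\pm e^{1/2}$, and is negative on $(0,e^{1/2})$.

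\textbf{Step 1: local theory and the trichotomy.} First I establish local existence and uniqueness for \eqref{radialform}. The singularity of $f$ sits only at $u=0$, where $f$ is continuous but not Lipschitz, so at a zero with $u'\neq0$ the trajectory crosses transversally and uniqueness persists. I also show continuous dependence of the solution on $\alpha$ on compact sets where $u$ stays away from a double zero. Then I split $(0,\infty)$ into three sets, following the usual Berestycki--Lions/Troy decomposition:
\begin{itemize}
\item $A_k$, the values of $\alpha$ for which $u$ has exactly $k$ zeros and then $E(r)<0$ at some $r$ after the last zero, so $u$ is trapped in a well and oscillates about $\pm1$;
\item $G_k$, the values for which $u$ has exactly $k$ zeros and $(u,u')\to(0,0)$, so $E\to0$;
\item $B_k$, the values for which a $(k+1)$-st zero occurs.
\end{itemize}
$A_k$ and $B_k$ are open: $A_k$ by continuity of the energy, and $B_k$ by transversality of the zeros. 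For small $\alpha$ (namely $\alpha<e^{1/2}$) we have $E(0)<0$, which gives (iii). For large $\alpha$ the number of zeros tends to infinity, by a rescaling $u=\alpha v(\sqrt{\log\alpha^2}\,r)$ that converges to the linear Bessel-type equation with infinitely many zeros. Connectedness then yields $G_k\neq\emptyset$ for every $k$, and the elementary monotonicity of $|u|$ between zeros gives the unique critical points with $|u|>e^{1/2}$. (This is the existence part, essentially Shuai's.)

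\textbf{Step 2: asymptotics (ii).} For $\alpha\in G_k$, after the last critical point $u$ is monotone to $0$. I set $w=-u'/(u\sqrt{|\log u^2|})$ and $L=\sqrt{|\log u^2|}$. The equation becomes the Riccati equation $-(u'/u)'=(u'/u)^2-\tfrac{n-1}{r}(u'/u)\cdot(-1)+\log u^2$, that is $q'=q^2-\tfrac{n-1}{r}q-L^2$ for $q=-u'/u$. Since $L\to\infty$, I use a comparison/barrier argument to show that $q/L\to1$. This forces $L'=q/L\to1$ (because $(L^2)'=2q$), hence $L\sim r$ and $q\sim r$, which is \eqref{decay1}. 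Then integrating $q\ge(1-\epsilon)r$ gives \eqref{decay2}. The derivative bound follows from $u'=-qu$ together with $q=O(r)$.

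\textbf{Step 3: uniqueness, the main obstacle.} For uniqueness I differentiate in $\alpha$: let $\varphi=\partial_\alpha u$, which solves $\varphi''+\frac{n-1}r\varphi'+(\log u^2+2)\varphi=0$. The standard criterion is that if, for every $\alpha\in G_k$, $\varphi$ has exactly $k+1$ zeros and $|\varphi|\to\infty$ with the appropriate sign, then $G_k$ is a single point. Indeed, nearby $\alpha$ then lie in $A_k$ on one side and $B_k$ on the other, so $G_k$ consists of isolated points that are "transversal" from a fixed side, which is incompatible with having two of them. To count the zeros of $\varphi$ I plan to adapt Tang's auxiliary functions: Wronskian-type combinations $\omega_i=a_i(r)u'\varphi' +b_i(r)\varphi u+\cdots$, built with $u$, $ru'$, and $u'$ as comparison solutions of the linearized operator (using $L[u]=-2u$ and $L[ru'+\,\cdot\,]$). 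These show that $\varphi$ has exactly one zero in each nodal interval and one in $(z_k,\infty)$. The log nonlinearity enters through $f'(u)=\log u^2+2$ being unbounded below near $0$. This is precisely where the Step 2 asymptotics are needed: the boundary terms of the Tang identities at $\infty$ must vanish, and the sign of $\varphi$ at infinity must be controlled. The Gaussian decay \eqref{decay2} kills terms like $r^{n}u'\varphi'$ once I show that $\varphi$ grows at most like $e^{(1/2+\epsilon)r^2}$, which I would prove by another Riccati comparison. Finally, monotonicity $\alpha_k<\alpha_{k+1}$ and (iv) follow from the order of the sets $A_k, G_k, B_k$ along the $\alpha$-line, since uniqueness makes each $G_k$ a single point separating $A_k$-type behavior on either side. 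Part (i) is the case $k=0$.
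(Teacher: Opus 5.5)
\textbf{There is a genuine gap: Step 3, which carries the whole uniqueness proof, is only announced.} Your framework is sound and close to the paper's. Step 2 is essentially Lemma~\ref{lemma decay}, via $(\sqrt{|\log u^2|})'=-u'/(u\sqrt{|\log u^2|})$. Your two-sided criterion at a bound state $\bar\alpha$ is also valid. The side $\alpha<\bar\alpha$ follows by mirroring McLeod's argument: $w=u-\bar u$ has $w,w'>0$ at a large $\bar r$, and $w$ keeps increasing while $u<e^{-1}$. Hence $u$ reaches a critical point with $0<u<e^{-1}$, where $E=F(u)<0$. So you could analyse $\varphi$ only at bound states. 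The paper instead proves $u'v>0$ at the zeros of every nodal solution, which gives global monotonicity of the zero count.

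Saying you will "adapt Tang's auxiliary functions" does not supply the sign information that counts the zeros of $\varphi$. The paper needs several specific ingredients:
\begin{itemize}
\item the functions $Q$, $M$, $Q_i$, $T_1=Q-\log u^2\,M$ and $T_2=Q-(\log u^2-1)M$;
\item the bridge function $B_a$ with $B_a'=-2F_a(u)Q_n/(ru'^2)$, and the identity $B_0=-2r^{n-1}vE/u'<0$ at a zero of $Q_1$;
\item positivity of the Pohozaev-type functions and monotonicity of $\omega$ and $P/r^n$ (Lemma~\ref{lemma positivefunction});
\item dimension-dependent estimates at the point $\theta_i$ where $Q_n$ may vanish: for $n\ge4$ the impossible chain $\frac{2(n-2)}{n}<\log u^2(\theta_i)\le 1$ (Lemma~\ref{lemma Qnpositive4}), and for $n=3$ the long integral estimate of Lemma~\ref{lemma dimension3}.
\end{itemize}
None of this is available for nodal solutions when $n=2$; there the paper cites Cort\'azar--Garc\'ia-Huidobro--Yarur. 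Your plan never distinguishes dimensions, so it contains no argument for $n=2$, $k\ge1$.

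\textbf{The concrete device you offer for infinity would fail.} A bound $|\varphi|\lesssim e^{(1/2+\epsilon)r^2}$ does not kill $r^nu'\varphi'$. By Lemma~\ref{lemma decay} and Remark~\ref{sharpness}, $|u'|$ is no better than $re^{-(1/2-\epsilon)r^2}$ and no smaller than $re^{-(1/2+\epsilon)r^2}$, so you only get $O(r^{n+1}e^{2\epsilon r^2})$. In fact $\log u^2+2\sim-r^2$, so the unbounded $\varphi$ grows like $e^{r^2/2+o(r^2)}$, matching the decay of $u$ up to $e^{o(r^2)}$. Consistently, the paper shows $Q$ increases to a positive limit for the ground state, and $B_0$ has a positive limit for bound states. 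So $v$-dependent boundary terms at infinity do not vanish, and the paper never uses them.

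On $[c_k,\infty)$, the paper counts the zeros of $v$ using only $B_0'>0$ together with $B_0<0$ at a putative first zero of $Q_1$. It proves $|v|\to\infty$ by contradiction:
\begin{itemize}
\item $v$ is eventually monotone, since $(r^{n-1}v')'=-r^{n-1}f'(u)v$ has a fixed sign for large $r$;
\item if $v\to L$ finite, then $r^nu'v'\to0$: either $r^{n-1}v'$ converges, or L'H\^opital gives $v'/r^3\to L/(n+2)$;
\item hence $Q\to0$ and $B_0\to0$, contradicting the positive limit.
\end{itemize}
The Gaussian decay is applied to $v$-terms only under this boundedness hypothesis. Without such an argument, the hypothesis $|\varphi|\to\infty$ of your criterion is unproved.

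\textbf{Two smaller points.}
\begin{itemize}
\item $E(0)<0$ gives (iii) only for $\alpha<e^{1/2}$. The range $e^{1/2}\le\alpha<e^{n/2}$ needs uniqueness of the ground state plus monotonicity of the zero count.
\item Defining $\varphi=\partial_\alpha u$ across zeros of $u$, where $f'(u)=\log u^2+2$ is singular, needs the $C^1$-dependence result the paper quotes, not just continuous dependence.
\end{itemize}
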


	\vskip 0.1 in
	
	As mentioned earlier, statement (i) of Theorem \ref{theoremmain1} is   known for all $n\geq 2$; it 
	was proved by d'Avenia--Montefusco--Squassina \cite{avenia2014} for 
	$n\geq 3$ and by Troy \cite{TroyARMA2018} for $2\leq n\leq 9$ using different methods.  In this paper we provide a unified proof that works for every $n\ge 2$.

	The proof of Theorem \ref{theoremmain1} is based on the shooting method,   introduced by Kolodner \cite{Kolodner-CPAM} and later developed by many authors to study equations of the form\begin{equation}\label{generalradial}
		u'' +\frac{n-1}{r}u'+ f(u)=0, \quad   r>0 , \quad  u(0)=\alpha,  \quad u'(0)=0, 
	\end{equation} 
	with various nonlinearities; see,  e.g., \cite{Kwong1989,MS1987,TangInvent2026,Coffman1972,Coffman1996,Kwong-Zhang-DIE1991,TroyARMA2018} and the references therein.  The analysis centers on the function \begin{equation}
		v(r,\alpha):={\partial u(r,\alpha)}/{\partial \alpha},
	\end{equation}
	where $u(r)=u(r,\alpha)$ solves  \eqref{generalradial}.  
	The core  objective   is to show  that    $v(r)= v(r,\alpha)$ changes sign exactly once between any two consecutive zeros of   $u(r)$ when  $u(r)$ is a nodal solution. In the bound state case, $v(r)$ also changes sign once after the final zero of $u(r)$,  while for a ground state it changes sign exactly once on the whole interval $\sbr{0,\infty}$. In addition, it is essential to prove the divergence of $v$ at infinity, that is, 
	\begin{equation}\label{divergenceV}
		\lim_{r\to \infty} |v(r)|=\infty.
	\end{equation}
	In a recent pioneering work on the model nonlinearity $f(u)=-u+|u|^{p-1}u$,  Tang \cite{TangInvent2026}  developed a   framework  that achieves  this objective,  based on the behavior of auxiliary functions  $Q$, $M$, $T_1$, $T_2$, $B_a$ (see Section~\ref{SectionMainresult} for explicit expressions).  In the present paper, we   aim to extend  this framework   to investigate the uniqueness of bound states for the logarithmic nonlinearity  $f(u)=u\log u^2$ and prove Theorem \ref{theoremmain1}.  
	
	Implementing the roadmap proposed in \cite{TangInvent2026} leads to some additional technical difficulties.
	For instance, the proof of \eqref{divergenceV}  in that work  relies heavily on the following asymptotic behavior of ground states and bound states at infinity
	\begin{equation}
		\lim_{r\to \infty}  \frac{u'(r)}{u(r)}=-\sqrt{\zeta}, \quad \quad \limsup_{r\to \infty}|u(r)|e^{\sqrt{ \zeta-\epsilon}r}<\infty, \quad \text{ for any } \epsilon\in \sbr{0,\zeta},
	\end{equation}
	which    follows from condition  (C3)    $f'(0)=-\zeta<0$ for some finite $\zeta$; see \cite[Proposition 2.2 (iv)]{TangInvent2026} and also \cite[Lemma 5]{PS1983}. However, this approach fails in the present setting, primarily because the logarithmic nonlinearity  $f(u)=u\log u^2$  is not locally Lipschitz at the origin.  Indeed,   \begin{equation}
		f'(u)=   {\log u^2 +2 }\to-\infty,\quad  \text{ as }~~u\to0.
	\end{equation}
	Consequently, one cannot expect the same exponential decay rate as in the case $f'(0)>-\infty$. In fact, a formal phase‑plane analysis suggests that  the ratio $u'(r)/u(r)\to -\infty$ as   $r\to\infty$, and the argument in \cite{TangInvent2026} does not carry over directly.  New ideas and techniques are therefore required. 
	To overcome this difficulty, we directly investigate the asymptotic behaviors of ground states and bound states to \eqref{radialform}.  	As shown in Lemma \ref{lemma decay}, such solutions satisfy
	\begin{equation} 
		\lim_{r\to\infty} \frac{  u'(r)}{u(r) \sqrt{\abs{ \log u^2(r)}}}=-1 ,   \quad \lim_{r\to\infty} \frac{  u'(r)}{ru(r)  }=-1 ,
	\end{equation}
	and  for any $ \epsilon \in \sbr{0,\frac{1}{2}}$,  
	\begin{equation} \label{a22}
		\limsup_{r\to \infty}|u(r)|e^{(\frac{1}{2}-\epsilon)r^2}<\infty,\quad   \limsup_{r\to \infty}|u'(r)|r^{-1}e^{(\frac{1}{2}-\epsilon)r^2}<\infty.
	\end{equation}Moreover, the exponent $\frac{1}{2}$ in \eqref{a22} is  sharp; we refer to Remark \ref{sharpness} for details.
	We believe that  these asymptotic behaviors are of independent interest and may be useful in other contexts.

		\vskip0.12in 
		This paper is organized as follows. In Section \ref{Sectionpreliminary}, we present some preliminary results,
		including some known results  and the asymptotic behavior of ground states and bound states. In Section \ref{SectionMainresult}, we  prove Theorems \ref{Thm} and \ref{theoremmain1}.

		\section{Preliminaries}\label{Sectionpreliminary}

		In \cite{TangInvent2026}, the author established several useful properties for the equation $   u'' +\frac{n-1}{r}u'+ f(u)=0$ under general assumptions on $f$,  denoted by (C1)--(C9) in that paper.  The nonlinearity $f(u)=u\log u^2$ of interest here   satisfies all of these conditions except (C3),   i.e.,  $f'(0)=-\zeta<0$ for some finite $\zeta$. In this  section we recall some known results from \cite{TangInvent2026}, introduce notation,  and establish the asymptotic convergence and decay rates  at infinity of ground states and bound states. Throughout this section, we assume that $n\geq 2$ unless otherwise specified.
		
		\subsection{Some basic properties}
		Throughout the paper, we  take 	$f(u)=u\log u^2$ and  define the well-known energy functional 
		\begin{equation}\label{defi of E}
			E(r)= \frac{{u'}^2(r)}{2}+ F(u(r)), \qquad 	 F(u)=\int_{0}^{u} f(t) ~\mathrm{d} t=\frac{1}{2} u^2\sbr{\log  u ^2-1},  
		\end{equation}
		for which we have 
		\begin{equation}
			E'(r)= -\frac{n-1}{r}{u'}^2(r), \quad E'(0)=0, \quad \text{ and }\quad  E(0)=\frac{\alpha^2}{2} \sbr{\log \alpha^2-1} .
		\end{equation}
		In what follows, we collect    some basic properties of $E$ and $u$; see   Lemmas \ref{lemma propertyE}--\ref{lemma nodalcriticalpoint}.      The  proofs  can be found in   \cite[Propositions 2.1, 2.2, 2.3 and  2.4]{TangInvent2026}.  We mention that these proofs  do  not rely on the condition  (C3). 
		\begin{lemma}\label{lemma propertyE}
			If $\alpha=1$, then the unique solution of  \eqref{radialform} is $u\equiv 1$.  Now let $u(r)\not\equiv 1$ be a solution of \eqref{radialform}.  Then the following hold:
			\begin{itemize}
				\medbreak
				\item[(i)] $E(r)$ is strictly decreasing in $\sbr{0,\infty}$. Moreover, if $u$ is a ground state or a bound state, then $E(r)>0$ for all $r>0$.
				\medbreak
				\item[(ii)] If there exists $0<r_1<r_2$ such that $ |u(r_1)|=|u(r_2)|$, then $|u'(r_1)|>|u'(r_2)|$.
				\medbreak
				\item[(iii)] Define $\nm{u}_\infty:=\sup \lbr{ |u(r)|: r\geq 0}$,   and then  we have 
				\begin{equation}
					\nm{u}_\infty =\alpha, \quad \text{if }\alpha>1;\quad \quad \alpha< \nm{u}_\infty <\alpha_*, \quad \text{if }\alpha<1,
				\end{equation}
				where   $\alpha_*=e^{1/2}$ is the unique positive number satisfying  $F(\alpha_*)=0$. 
			\end{itemize}
		\end{lemma}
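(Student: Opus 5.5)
The plan is to argue directly from the energy identity $E'(r)=-\frac{n-1}{r}{u'}^2(r)$ together with the shape of $F(u)=\frac12u^2(\log u^2-1)$. The potential $F$ is even. It is negative on $(0,\alpha_*)$ and positive beyond $\alpha_*=e^{1/2}$. It is strictly decreasing on $(0,1)$ and strictly increasing on $(1,\infty)$, with minimum $F(1)=-\frac12$. Local existence and uniqueness for \eqref{radialform} hold near $r=0$ because $f$ is smooth near $\alpha>0$. Uniqueness away from $u=0$ holds by the local Lipschitz property of $f$ there. At zeros of $u$ we have $u'\neq0$: otherwise $E=F(0)=0$ at that point, which is incompatible with strict decrease of $E$ from a value that must then be positive (or we argue via the continuity of $f$ and Osgood-type uniqueness, since $|f(u)|\le C|u||\log|u||$ satisfies the Osgood condition). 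If $\alpha=1$, then $u\equiv1$ solves the problem, and Osgood uniqueness gives that it is the only solution.

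\textbf{Step (i).} For $u\not\equiv1$, $u'$ cannot vanish identically on any interval, since that would force $f(u)=0$ there, i.e.\ $u\in\{0,\pm1\}$ constant, and uniqueness then gives $u\equiv 1$. Hence $E'\le0$ with equality only at isolated points (and at $r=0$), so $E$ is strictly decreasing. For a ground or bound state, \eqref{decayassum} gives $E(r)\to F(0)=0$. Strict decrease then yields $E(r)>0$ for every finite $r$.

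\textbf{Step (ii).} If $|u(r_1)|=|u(r_2)|$ with $r_1<r_2$, then $F(u(r_1))=F(u(r_2))$. Since $E(r_1)>E(r_2)$, it follows that ${u'}^2(r_1)>{u'}^2(r_2)$.

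\textbf{Step (iii), case $\alpha>1$.} Suppose $|u(r)|>\alpha$ for some $r$. By continuity there is a first $r_2>0$ with $|u(r_2)|=\alpha$ at which $|u|$ is about to exceed $\alpha$. Applying Step (ii) with $r_1=0$ gives $|u'(r_2)|<|u'(0)|=0$, which is impossible. Hence $\|u\|_\infty=\alpha$.

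\textbf{Step (iii), case $\alpha<1$.} From the equation, $u''(0)=-\frac1n f(\alpha)>0$, so $u$ initially increases and $\|u\|_\infty>\alpha$. For the upper bound, note that $E(r)<E(0)=F(\alpha)<0$ for $r>0$. Hence $F(u(r))\le E(r)<0$, which forces $|u(r)|<\alpha_*$, and also $u$ never reaches $0$. To get the strict inequality $\|u\|_\infty<\alpha_*$, and not merely $\le$, we use the strict monotonicity: on $[r_0,\infty)$ with $r_0>0$ fixed we have $F(u)\le E(r_0)<0$. Combined with $|u|<\alpha_*$ on $[0,r_0]$ by compactness, this gives $\sup|u|<\alpha_*$.

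The only delicate point is the uniqueness and the non-degeneracy of zeros at $u=0$, where $f$ fails to be Lipschitz. I expect this to be the main obstacle. I would first try to handle it with the energy argument above, namely that a double zero would give $E=0$ at a finite point while $E$ is strictly decreasing. Where that falls short, I would invoke the Osgood condition $\int_0\frac{ds}{s|\log s|}=\infty$, which provides uniqueness of solutions through $(0,0)$ and shows that the trivial solution is the only one.
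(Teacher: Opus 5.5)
Your proposal is correct and is the same energy-monotonicity argument the paper relies on when it cites Tang's Propositions~2.1--2.4 (the paper gives no separate proof); your Osgood-type uniqueness for the trivial solution through $(u,u')=(0,0)$ is exactly what Step~(i) needs to exclude $u\equiv 0$ on an interval, and it supplies a well-posedness point the paper leaves implicit. Two minor fixes: first, drop the energy argument for simple zeros, since a strictly decreasing $E$ can pass through $0$ so $E(z)=0$ gives no contradiction, and the lemma does not need simple zeros anyway; second, in the case $\alpha>1$, either note $u''(0)=-f(\alpha)/n<0$ so that a crossing point $r_2>0$ exists, or observe directly that $F(u(r))\le E(r)<E(0)=F(\alpha)$, with $F$ even and increasing on $[1,\infty)$, forces $|u(r)|<\alpha$ for all $r>0$.
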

		\begin{lemma}\label{lemma oscalliate}
			Let   $u(r)\not \equiv 1$ be a solution of \eqref{radialform}.  If $E(\bar r)\leq 0$ and $u(\bar r)>0$ at some $\bar r\geq 0$, then $u(r)\in \sbr{0, \alpha_*}$ and it oscillates about $u\equiv1$ in $\sbr{\bar r, \infty}$: There is a sequence of critical points of $u$, labelled as $\tilde{c}_1<\tilde{c}_2<\cdots$, such that 
			\begin{equation}
				\alpha_*>u(\tilde{c}_1)>u(\tilde{c}_3)>\cdots>1 \quad \text{ and } \quad 0<u(\tilde{c}_2)<u(\tilde{c}_4)<\cdots<1.
			\end{equation}  If  $E(\bar r)\leq 0$ and $u(\bar r)<0$ at some $\bar r\geq 0$, then the same can be said for $-u$.
		\end{lemma}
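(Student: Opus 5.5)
We argue in the phase plane using the energy $E$ and the monotonicity in Lemma \ref{lemma propertyE}(i). Suppose $E(\bar r)\le 0$ and $u(\bar r)>0$; the case $u(\bar r)<0$ follows by applying the argument to $-u$, since $f$ is odd.

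\textbf{Step 1: $u$ stays in $(0,\alpha_*)$.} Because $E$ is strictly decreasing and $u\not\equiv 1$, we have $E(r)<0$ for all $r>\bar r$. Since $E\ge F(u)$, this gives $F(u(r))<0$, hence $0<|u(r)|<\alpha_*$. The sign cannot switch: $u$ is continuous and $u(\bar r)>0$, and a zero of $u$ would give $E=\tfrac12 {u'}^2\ge 0$, which is impossible. So $u(r)\in(0,\alpha_*)$ on $(\bar r,\infty)$.

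\textbf{Step 2: infinitely many critical points.} Set $w=u-1$. Then $f(u)=u\log u^2$ is positive on $(1,\alpha_*)$ and negative on $(0,1)$, with $f'(1)=2$, so the equation is a damped oscillator about $u=1$. Suppose $u$ were eventually monotone. Then $u\to L\in[0,\alpha_*]$. The condition $E<E(r_0)<0$ keeps $F(L)\le E(r_0)<0$, which excludes $L=0$ and $L=\alpha_*$. Monotone convergence, together with $u''=-\frac{n-1}{r}u'-f(u)$, forces $u'\to 0$ and then $f(L)=0$, so $L=1$.

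To exclude convergence to $1$ without oscillation, we use Sturm comparison. The linearization $w''+\frac{n-1}{r}w'+2w\approx0$ is oscillatory. More concretely, near $L=1$ we have $f(u)/(u-1)\to 2$, so $w$ satisfies $w''+\frac{n-1}{r}w'+q(r)w=0$ with $q\to 2$. Sturm comparison with $\sin(\sqrt{q_0}\,r)\,r^{-(n-1)/2}$ for some $q_0<2$ then forces zeros of $w$, a contradiction. Hence $u$ has infinitely many critical points $\tilde c_1<\tilde c_2<\cdots$ (with $\tilde c_1\ge \bar r$), and $w$ changes sign between them.

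\textbf{Step 3: the critical points alternate about $1$.} At a critical point $c$, $u''(c)=-f(u(c))$. So a local maximum has $u>1$ and a local minimum has $u<1$. Since $u''$ has a fixed sign at each critical point, the critical points are nondegenerate unless $u=1$ there. But $u(c)=1$ with $u'(c)=0$ would give $u\equiv1$ by uniqueness of the ODE, which is excluded. Therefore maxima and minima alternate; after relabeling, odd indices are maxima above $1$ and even indices are minima below $1$.

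\textbf{Step 4: monotonicity of the extremal values.} At each critical point, $E(\tilde c_j)=F(u(\tilde c_j))$, and $E$ is strictly decreasing, so $F(u(\tilde c_j))$ strictly decreases in $j$. On $(1,\alpha_*)$, $F$ is strictly increasing because $f>0$ there. Hence the maxima satisfy $u(\tilde c_1)>u(\tilde c_3)>\cdots>1$, and $u(\tilde c_1)<\alpha_*$ by Step 1. On $(0,1)$, $F$ is strictly decreasing because $f<0$ there. Hence the minima increase: $0<u(\tilde c_2)<u(\tilde c_4)<\cdots<1$.

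The main obstacle is Step 2, namely excluding monotone convergence to $1$. The Sturm comparison argument there is routine, since $f$ is smooth near $u=1$; the singularity of $f$ at the origin plays no role, because Step 1 keeps $u$ away from $0$.
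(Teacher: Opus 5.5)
Your argument is correct. The paper does not prove this lemma itself. It imports it from Tang's Propositions 2.1--2.4 and remarks only that condition (C3) is not used there.

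What you give is a correct, self-contained version of the standard energy/phase-plane proof. It has four ingredients:
\begin{itemize}
\item confinement to $(0,\alpha_*)$ via $E<0$;
\item exclusion of monotone convergence to $1$, by Sturm comparison with the linearization at $u=1$;
\item nondegeneracy of critical points, which forces maxima above $1$ and minima below $1$ to alternate;
\item monotonicity of $F$ on $(0,1)$ and on $(1,\alpha_*)$, which gives the ordering of the extremal values.
\end{itemize}
This also makes the paper's remark about (C3) explicit. Step 1 keeps $u$ away from the non-Lipschitz point $u=0$, so the only local analysis needed is at $u=1$, where $f$ is smooth and $f'(1)=2>0$. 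Your comparison function $r^{-(n-1)/2}\sin(\sqrt{q_0}\,r)$ is legitimate: it solves the radial equation with coefficient $q_0+\frac{(n-1)(n-3)}{4r^2}$, and this coefficient is eventually below $q(r)$.

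One cosmetic fix: choose $\tilde c_1>\bar r$ strictly, not $\tilde c_1\geq\bar r$. If $\bar r$ is itself a critical point with $u(\bar r)=\alpha_*$ (so $E(\bar r)=0$), then taking $\tilde c_1=\bar r$ would violate the strict bound $u(\tilde c_1)<\alpha_*$.
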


		The following lemma  provides a classification of positive and nodal (sign-changing)  solutions of \eqref{radialform}. Note that the  ground state  corresponding to  a positive solution of \eqref{radialform} that satisfies  $u(r)\downarrow0$ as $r\to \infty$,  while a bound state is a nodal solution of \eqref{radialform} that satisfies $|u(r)|\downarrow 0$ as $r\to\infty$.
		
		\begin{lemma}\label{lemma positivenodalsolution}
			Let   $u(r)\not \equiv 1$ be a solution of \eqref{radialform}.  A positive solution $u $ is either a ground state, or an oscillatory function that oscillates about $1$ behind its last zero. A nodal solution $u$  is either a bound state, or an oscillatory function that oscillates about $1$ or $-1$ behind its last zero.
			
		\end{lemma}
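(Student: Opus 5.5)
The plan is to read off the behaviour of $u$ after its last zero (or on all of $(0,\infty)$ if $u>0$) from the energy $E$ and Lemmas \ref{lemma propertyE}--\ref{lemma oscalliate}. Replacing $u$ by $-u$ if necessary, assume $u>0$ on some interval $(\bar z,\infty)$, where $\bar z$ is the last zero of $u$ (or $\bar z=0$ if $u$ is positive). There are only finitely many zeros: zeros are simple by uniqueness for the ODE, and an accumulation point of zeros would force $u=u'=0$ there. Then $E$ is strictly decreasing with $E(\bar z)>0$, and it has a limit $E_\infty\in[-\infty,E(\bar z))$. We split into two cases according to the sign of $E$.

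\emph{Case 1: $E(\bar r)\le 0$ for some $\bar r>\bar z$.} Since $u(\bar r)>0$, Lemma \ref{lemma oscalliate} applies directly. It gives that $u\in(0,\alpha_*)$ on $(\bar r,\infty)$ and that $u$ oscillates about $1$ there. A solution of this type is neither a ground state nor a bound state: by Lemma \ref{lemma propertyE}(i) a ground state or bound state has $E>0$ everywhere, which is excluded here. This case therefore gives the oscillatory alternative.

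\emph{Case 2: $E(r)>0$ for all $r>\bar z$.} We will show that $u\downarrow 0$. Boundedness of $u$ together with $E\le E(0)$ gives $u'$ bounded.
\begin{itemize}
\item First, $u$ is eventually monotone. At a local minimum $r_0$ with $u(r_0)>0$, we have $u''(r_0)\ge 0$, so $f(u(r_0))\le0$, that is, $u(r_0)\le 1$. Then $E(r_0)=F(u(r_0))<0$, because $F<0$ on $(0,\alpha_*)$. This contradicts the case assumption, so $u$ has no interior local minimum on $(\bar z,\infty)$.
\item Consequently $u$ has at most one local maximum there, after which it decreases strictly. It cannot increase forever: an increasing bounded $u$ tends to a limit $L>0$, and then $u''+\frac{n-1}{r}u'\to -f(L)$. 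This forces $f(L)=0$, so $L=1$, and then $E_\infty=F(1)<0$, a contradiction.
\item So $u$ decreases to a limit $L\ge0$. The same argument shows $f(L)=0$, so $L\in\{0,1\}$. The value $L=1$ would again give $E<0$ eventually.
\end{itemize}
Hence $u\downarrow 0$. Since $E$ is monotone with a limit, $u'^2/2\to E_\infty$. Integrability of $u'$ (it has a sign and $u$ converges) forces $E_\infty=0$, so $u'\to0$. Thus $u$ is a ground state if it has no zeros, and a bound state otherwise.

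The main obstacle is the limit step in the second and third bullets, where $f$ is merely continuous at $0$ with $f'(0)=-\infty$. The trick $u''\to -f(L)\ne0$ needs only continuity of $f$ and $u'$ bounded. So I expect the non-Lipschitz behaviour at $0$ to matter only for uniqueness and simplicity of zeros. That point can be handled by noting that $u=u'=0$ at some point would give $E=0$ there, contradicting strict decrease of $E$ together with $E_\infty\ge$ (in Case 2) $0$; in Case 1, $u$ stays positive after $\bar r$ anyway.
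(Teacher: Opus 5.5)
The paper gives no argument for this lemma and only cites Tang, so a self-contained energy proof is a reasonable route. Your dichotomy is correct \emph{once a last zero $\bar z$ is available}. Case~1 is exactly Lemma~\ref{lemma oscalliate}. In Case~2, an interior local minimum on $(\bar z,\infty)$ would sit at a level $u\le 1<\alpha_*$ with $E=F(u)<0$. That observation, together with the limit argument forcing $f(L)=0$ (and $E_\infty=F(1)<0$ if $L=1$), gives $u\downarrow 0$ and $u'\to 0$ using only continuity of $f$ at $0$.

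The gap is the existence of $\bar z$. Your finiteness argument only excludes zeros accumulating at a \emph{finite} point. It says nothing about infinitely many simple zeros $z_1<z_2<\cdots\to\infty$. The lemma has to exclude exactly this configuration, because such a $u$ is neither a bound state in the sense $|u|\downarrow 0$ nor eventually oscillating about $\pm1$. Neither of your cases reaches it. Since $E(z_j)=u'(z_j)^2/2>0$ and $E$ is strictly decreasing, such a $u$ has $E>0$ on all of $[0,\infty)$, so Case~1 never occurs; Case~2 needs a fixed sign on a half-line. Applied lobe by lobe, your local-minimum argument only shows that each lobe has a single extremum $c_j$ with $|u(c_j)|>\alpha_*$. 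That is perfectly compatible with endless oscillation.

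To close the gap, you can invoke finiteness of sign changes, which the paper records separately in Lemma~\ref{lemma nodalcriticalpoint} (from Tang). Otherwise you must prove it by a genuine dissipation argument. On each lobe $|u|$ passes through a fixed band where $|u'|\ge\sqrt{-2F(u)}$ is bounded below, so $E$ drops by at least $c/z_{j+1}$ per lobe. You then have to show the lobes are short enough that these losses sum to infinity. This is easy if $E_\infty>0$, since lobe lengths are then bounded. It is delicate if $E\downarrow 0$: here $\int_0 du/\sqrt{-2F(u)}=\infty$, so the zero-energy orbit needs infinite time to reach $0$ and the lobes can become arbitrarily long.

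A side remark: you do not need your energy workaround for simplicity of zeros. The nonlinearity $f(u)=u\log u^2$ is log-Lipschitz, so Osgood's criterion gives uniqueness through $(u,u')=(0,0)$ at any $r>0$, which is what the paper uses to exclude double zeros. Moreover, the workaround on its own would not rule out zeros accumulating from the left at a point where $E$ vanishes.
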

		
		\begin{remark}
			{\rm We point out that in \cite[Proposition~2.4 (iv)]{TangInvent2026}, the asymptotic decay behavior of ground states and bound states is established using condition (C3). This behavior is crucial for proving uniqueness; however, in our setting condition (C3) is not satisfied. In the next subsection \ref{subsetionasymptotic}, we will therefore investigate the asymptotic convergence and decay rates at infinity of ground states and bound states. }
		\end{remark}

		\vskip0.04in
		For the ground state, we have the following result. 
		\begin{lemma} \label{lemma existcriticalpoint}
			If $u   $ is a ground state  of \eqref{radialform}, then  we have  $\alpha>\alpha_*=e^{1/2}$ and  $u'(r)<0$ in $\sbr{0,\infty}$.
		\end{lemma}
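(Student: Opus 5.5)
We argue via the energy $E$ and Lemma \ref{lemma propertyE}. Let $u$ be a ground state, i.e. a positive solution of \eqref{radialform} with $u(r)\downarrow 0$ as $r\to\infty$. First, $\alpha=1$ is impossible, since then $u\equiv 1$ does not decay. Next, by Lemma \ref{lemma propertyE}(i), $E(r)>0$ for all $r>0$. Since $E$ is strictly decreasing, $E(0)>E(r)>0$, so
\begin{equation}
\frac{\alpha^2}{2}\sbr{\log\alpha^2-1}=E(0)>0 .
\end{equation}
This forces $\alpha^2>e$, that is, $\alpha>\alpha_*=e^{1/2}$. Alternatively, if $\alpha<1$, Lemma \ref{lemma propertyE}(iii) shows that $u$ stays in a region where $F(u)<0$ whenever $u'=0$, contradicting $E>0$. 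Either route gives $\alpha>\alpha_*$.

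For monotonicity, note first that near $r=0$ we have $u''(0)=-f(\alpha)/n=-\alpha\log\alpha^2/n<0$, because $\alpha>1$. Hence $u'<0$ on some interval $(0,\delta)$. Suppose $u'$ vanishes somewhere in $(0,\infty)$, and let $r_0$ be its first zero. Since $u$ is positive, we have $E(r_0)=F(u(r_0))>0$, which requires $u(r_0)>\alpha_*>1$. On the other hand, $u(r_0)<\alpha$, so $u(r_0)\in(\alpha_*,\alpha)$. Then $f(u(r_0))>0$, and the equation gives $u''(r_0)=-f(u(r_0))<0$. Thus $r_0$ is a strict local maximum. But $u$ was decreasing on $(0,r_0)$ and $u'(r_0)=0$, so $u''(r_0)\ge 0$ would be needed for $r_0$ to be the first zero of $u'$ approached from below. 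This is a contradiction, unless $u''(r_0)=0$, which is excluded because $f(u(r_0))\neq 0$. Hence $u'<0$ on all of $(0,\infty)$.

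The only delicate point is to make sure the critical-point argument uses $E(r_0)>0$ correctly, so that $u(r_0)$ lands in the region where $f>0$. Lemma \ref{lemma propertyE}(i) supplies exactly this.
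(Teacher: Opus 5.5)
Your proof is correct and is essentially the standard energy argument behind this result; the paper gives no proof of its own and defers to Tang's Propositions 2.1--2.4. In that argument $E(0)>E(r)>0$ forces $\alpha>\alpha_*$, and at a first interior critical point $r_0$ the positivity of $E(r_0)=F(u(r_0))$ gives $f(u(r_0))>0$, hence $u''(r_0)<0$, which is incompatible with $u'<0$ on $(0,r_0)$. The one inaccuracy is your ``alternative route'' via Lemma \ref{lemma propertyE}(iii): it only excludes $\alpha<1$ and says nothing about $1<\alpha\le\alpha_*$, so it does not on its own give $\alpha>\alpha_*$; your first route is the one that does the work.
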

		\vskip0.04in

		Note that a nodal solution cannot have a double zero, since $u(r) = u'
		(r) = 0$ at any $r > 0$ would imply $u \equiv 0$ on $[0,\infty)$. From now on, we  define a $k$-node solution as a nodal solution of \eqref{radialform}  that has exactly $k$ zeros.   If a  $k$-node solution   of \eqref{radialform}   satisfies the asymptotic   condition \eqref{decayassum}, we call it a $k$-node bound state.
		\vskip0.08 in
		\begin{lemma}\label{lemma nodalcriticalpoint}
			If $u   $ is a nodal solution of \eqref{radialform}, then  $u$ has only finitely many sign changes. Suppose that $u$ has exactly $k\geq 1$ zeros $z_1<z_2<\cdots<z_k$. Then  $u$ has $k$ critical points in $[0,z_k]$, labelled as $0=c_0<c_1<\cdots<c_{k-1}$, with $c_i\in (z_i,z_{i+1})$, $1\leq i\leq k-1$, and $$\alpha=u(0)>|u(c_1)|>|u(c_2)|>\cdots>|u(c_{k-1})|>\alpha_*.$$ If in addition $u$
			is a bound state,  then there is a unique critical point $c_k>z_k$ with $$|u(c_{k-1})|>|u(c_{k})|>\alpha_*.$$
			
		\end{lemma}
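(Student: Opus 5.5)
The proof rests on the energy $E$ of \eqref{defi of E} together with Lemmas \ref{lemma propertyE}--\ref{lemma oscalliate}, and we treat the interval $(0,z_1)$ first and then pass inductively across the zeros.

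\textbf{Finitely many zeros.} Suppose $u$ had infinitely many zeros $z_1<z_2<\cdots$. At every zero, $E(z_i)=\frac12 u'(z_i)^2>0$. Between consecutive zeros $u$ has a critical point $c$ with $|u(c)|\ge$ some value, and $E(c)=F(u(c))<E(z_i)$. Since $E$ is decreasing and bounded below by $\min F=-\frac12$, it has a limit $E_\infty\ge 0$. If $E_\infty>0$, the orbit stays in the annular region $\{E\ge E_\infty\}$. Here the damping term $\int \frac{n-1}{r}u'^2\,dr$ over each half-oscillation is comparable to $\frac1r$ times a positive constant, because the time spent per oscillation is bounded and $u'^2$ is bounded below on a fixed fraction of it. Summing $\sum 1/z_i$ over infinitely many oscillations with bounded period diverges, which contradicts $E$ being bounded below. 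If instead $E_\infty=0$, then for large $r$ we have $E(r)$ small, so $|u|$ is small between zeros. Near $0$, however, $f(u)=u\log u^2$ has "infinite restoring force" with the wrong sign: $f(u)/u\to-\infty$, so $u''\approx -u\log u^2$ has the same sign as $u$. Hence $u$ cannot turn around while small, which rules out a critical point with $|u(c)|$ small. Concretely, at a critical point $E(c)=F(u(c))$ must exceed $0$ (since $E>E_\infty=0$ there), forcing $|u(c)|>\alpha_*$ and contradicting smallness. So the zeros are finite.

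\textbf{Critical points in $[0,z_k]$.} On $(z_i,z_{i+1})$, $u$ has one sign, say positive, and attains a maximum at some $c_i$. At $c_i$, $E(c_i)=F(u(c_i))>E(z_{i+1})>0$, so $u(c_i)>\alpha_*>1$. Uniqueness of the critical point: at any critical point $\tilde c$ where $u(\tilde c)<1$ we would have $F(u(\tilde c))<0$ while $E(\tilde c)\ge E(z_{i+1})>0$, which is impossible. So every critical point in the interval has $u>\alpha_*$ and is therefore a strict local maximum, since $u''=-f(u)<0$ there. Two strict maxima would require a local minimum between them, which is also a critical point with $u>1$ and so $u''<0$, a contradiction. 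On $[0,z_1)$ the same argument shows that $0$ is the only critical point, with $\alpha>\alpha_*$. The strict ordering $|u(c_{i})|>|u(c_{i+1})|$ follows from $F(u(c_i))=E(c_i)>E(c_{i+1})=F(u(c_{i+1}))$ and the monotonicity of $F$ on $(\alpha_*,\infty)\supset(1,\infty)$.

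\textbf{The bound state case.} If $u$ is a bound state, then $E\to0$ and $E>0$ by Lemma \ref{lemma propertyE}(i). After $z_k$, $u$ has fixed sign and tends to $0$, so it must have a maximum $c_k$ of $|u|$. The argument above (every critical point has $|u|>\alpha_*$ because $E>0$) gives uniqueness and $|u(c_k)|>\alpha_*$, and $E(c_{k-1})>E(c_k)$ gives the ordering.

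\textbf{Main obstacle.} The main obstacle is the case $E_\infty>0$ in the first step. I expect to handle it by showing that a uniform lower bound on energy loss per oscillation still yields $\sum 1/z_i=\infty$, using the bounded period on compact energy levels.
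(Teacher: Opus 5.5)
Your argument for the critical points is correct. Any critical point $\tilde c$ in $(z_i,z_{i+1})$, or beyond $z_k$ for a bound state, has $F(u(\tilde c))=E(\tilde c)>0$, so $|u(\tilde c)|>\alpha_*$ and it is a strict extremum of $|u|$; this forces uniqueness. The ordering follows from the strict decrease of $E$ (note $(\alpha_*,\infty)\subset(1,\infty)$, not $\supset$). The paper itself gives no proof of this lemma and defers to Tang.

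\textbf{The gap is in the finiteness of the zeros, in the case $E_\infty=0$.} The step ``$E(r)$ small, so $|u|$ is small between zeros'' is false. The level set $\{\tfrac12u'^2+F(u)=0\}$ is the figure-eight homoclinic loop of $u''+f(u)=0$ through $(0,0)$ and $(\pm\alpha_*,0)$. An orbit with $E\downarrow 0$ can therefore wind around this loop with $|u(c_i)|\downarrow\alpha_*$. That is exactly what your own observation $F(u(c_i))=E(c_i)>0$ gives, so no contradiction arises. Your bounded-period argument does not cover this case either: the loop is traversed in infinite time, because the origin is a saddle, so the half-periods become unbounded. You have also misplaced the obstacle. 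The case $E_\infty>0$ is routine: on $\{|u|\le\alpha_*\}$ one has $u'^2\ge 2E_\infty$, and on $\{|u|>\alpha_*\}$ one has $|f(u)|\ge\alpha_*$ while the damping term is $O(1/r)$, which bounds the half-periods. The case $E_\infty=0$ is the one that needs a real idea.

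\textbf{One way to close it} is the weighted energy identity $\frac{d}{dr}\bigl(r^{2(n-1)}E\bigr)=2(n-1)r^{2n-3}F(u)$. Suppose $E_\infty=0$ and split a late half-oscillation $[z_i,z_{i+1}]$ into three pieces, writing $s_i$ for the exit point of the excursion above $\alpha_*$.
\begin{itemize}
\item The ascent from $z_i$ up to $|u|=\alpha_*$ lies in $\{|u|<\alpha_*\}$, where $F<0$, so it contributes negatively.
\item The excursion of $|u|$ above $\alpha_*$ has height $F(u(c_i))=E(c_i)\to 0$. Its duration is at most $4\sqrt{2E}/\alpha_*\to 0$, since it starts with $u'^2=2E$ and $|u''|\ge\alpha_*/2$ there for large $r$. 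Hence it contributes only $s_i^{2n-3}\cdot o(1)$.
\item The following descent of $|u|$ from $\alpha_*$ to $0$ is monotone, since $c_i$ is the only critical point. On it, $\int|F(u)|\,dr\ge(\sup|u'|)^{-1}\int_0^{\alpha_*}|F(s)|\,ds=:c_0>0$, and the weight $r^{2n-3}$ is at least $s_i^{2n-3}$.
\end{itemize}
Hence $r^{2(n-1)}E$ drops by at least $(n-1)c_0$ over every late half-oscillation. Then $z_N^{2(n-1)}E(z_N)\to-\infty$, contradicting $E(z_N)=\tfrac12u'(z_N)^2>0$.
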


		\subsection{The asymptotic convergence and decay rates  at infinity}\label{subsetionasymptotic}
		The asymptotic behavior of ground states and bound states described below plays a crucial role in proving Theorem \ref{theoremmain1}, and seems to be new in the study of logarithmic equations.
		\begin{lemma}\label{lemma decay}
			Let $u$ be a ground state or a bound state  of \eqref{radialform}. Then
			\begin{equation}\label{convergence}
				\lim_{r\to\infty} \frac{  u'(r)}{u(r) \sqrt{\abs{ \log u^2(r)}}}=-1 , \qquad  \lim_{r\to\infty} \frac{  u'(r)}{ru(r)  }=-1 .
			\end{equation}
			Moreover,  for any $ \epsilon \in \sbr{0,\frac{1}{2}}$, we have 
			\begin{equation}\label{decay}
				\limsup_{r\to \infty}|u(r)|e^{(\frac{1}{2}-\epsilon)r^2}<\infty,\qquad   \limsup_{r\to \infty}|u'(r)|r^{-1}e^{(\frac{1}{2}-\epsilon)r^2}<\infty.
			\end{equation}
			
		\end{lemma}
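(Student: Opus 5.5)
The plan is to pass to a Riccati equation for the logarithmic derivative of $u$ on the tail, where $u$ is monotone, and to squeeze it between a lower bound coming from the energy and an upper bound coming from a blow-up argument.

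\textbf{Reduction to the tail.} By Lemma \ref{lemma existcriticalpoint} (ground state) or Lemma \ref{lemma nodalcriticalpoint} (bound state), there is $R_0\ge 0$ (namely $R_0=0$, resp. $R_0=c_k$) beyond which $|u|$ is strictly decreasing to $0$. Replacing $u$ by $-u$ if necessary, we may assume $u>0$ and $u'<0$ on $(R_0,\infty)$. On this interval define
\begin{equation}
w(r)=-\frac{u'(r)}{u(r)}>0,\qquad L(r)=-\log u^2(r)\to\infty .
\end{equation}
A direct computation from \eqref{radialform} gives the Riccati system
\begin{equation}
w'=w^2-\frac{n-1}{r}w-L,\qquad L'=2w .
\end{equation}

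\textbf{Lower bound from the energy.} Lemma \ref{lemma propertyE}(i) gives $E(r)>0$. Since $F(u)=-\tfrac12u^2(L+1)$, the inequality $E>0$ reads ${u'}^2>u^2(L+1)$, that is,
\begin{equation}
w^2>L+1 \quad\text{on } (R_0,\infty).
\end{equation}
In particular $\liminf w/\sqrt L\ge1$.

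\textbf{Upper bound by a blow-up argument.} This is the step I expect to be the main obstacle. Fix $\delta>0$ and set $g=w^2-(1+\delta)L$, so that
\begin{equation}
g'=2w\Big(w^2-\frac{n-1}{r}w-L-1-\delta\Big).
\end{equation}
I first need $w/r$ bounded, or at least $w=o(w^2)$, for large $r$; the lower bound $w\ge\sqrt{L}\to\infty$ gives $w^2\gg w$, which should suffice. Then whenever $g\ge0$ and $r$ is large, the bracket is at least $\delta L-\frac{n-1}{r}w-1-\delta$. Writing $L\ge w^2/(1+\delta)$ on $\{g\ge 0\}$, this is $\ge c_\delta w^2-\frac{n-1}{r}w-C>0$ for large $r$. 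Hence $g$ stays positive once it becomes nonnegative at a large $r$, and there $w'\ge c_\delta w^2-\frac{n-1}{r}w\ge \tfrac12 c_\delta w^2$. This forces $w$ to blow up in finite $r$, contradicting the fact that $u>0$ and $u$ is $C^1$ for all $r$ (so $w$ is finite). Therefore, for every $\delta>0$, eventually $w^2<(1+\delta)L$. Combined with the lower bound,
\begin{equation}
\frac{w}{\sqrt L}\to1,
\end{equation}
which is the first limit in \eqref{convergence}.

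\textbf{Second limit and decay rates.} From $L'=2w$ we get $(\sqrt L)'=w/\sqrt L\to1$, hence $\sqrt L/r\to1$. Then $w/r=(w/\sqrt L)(\sqrt L/r)\to1$, which gives $u'/(ru)\to-1$. For the decay, given $\epsilon\in(0,\tfrac12)$, pick $\eta>0$ small. For large $r$ we have $\sqrt L\ge(1-\eta)r-C$, so $L\ge(1-2\eta)r^2-C'$ and
\begin{equation}
|u|=e^{-L/2}\le Ce^{-(\frac12-\epsilon)r^2}
\end{equation}
once $\eta$ is small relative to $\epsilon$. Finally, $|u'|=w|u|\le 2r|u|$ for large $r$, which yields the bound on $|u'|r^{-1}e^{(\frac12-\epsilon)r^2}$ in \eqref{decay}.
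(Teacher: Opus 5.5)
Your argument is correct, but it reaches the key limit $w/\sqrt{L}\to 1$ by a different route than the paper.

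\textbf{The paper's route.} The paper works with the normalized quantity $\hat u=w/\sqrt{L}$. A single blow-up argument gives the crude bound $\hat u\le 2$ on the tail. It then identifies $\lim \hat u^2=1$ by applying L'H\^opital's rule to $u'^2/(u^2|\log u^2|)$ and substituting the equation. The a priori boundedness of $\hat u$ is exactly what kills the term $\frac{n-1}{r}\hat u\sqrt{|\log u^2|}/(\log u^2+1)$.

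\textbf{Your route.} You squeeze instead, with no L'H\^opital step. The lower bound $w^2>L+1$ comes for free from $E>0$ (Lemma~\ref{lemma propertyE}). The upper bound $w^2<(1+\delta)L$ comes from a $\delta$-dependent blow-up argument on $g$.

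\textbf{What each buys.} The paper's route is slightly shorter. Yours gives the non-asymptotic inequality $|u'|>|u|\sqrt{|\log u^2|+1}$ on the whole tail. By itself this yields $(\sqrt L)'>1$ wherever $|u|<1$. Hence $|u(r)|\le e^{-(r-C)^2/2}$ for large $r$, which is slightly sharper than the first bound in \eqref{decay} and needs no blow-up argument at all. The blow-up step is then only needed for the two limits in \eqref{convergence} and for the $u'$ estimate.

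\textbf{A slip to fix.} On $\{g\ge 0\}$ you have $L\le w^2/(1+\delta)$, not $L\ge w^2/(1+\delta)$. So $\delta L$ alone does not control $w^2$, and it does not dominate $\frac{n-1}{r}w$. The estimate you want instead follows directly from
\[
w^2-L\ \ge\ \frac{\delta}{1+\delta}\,w^2 \quad\text{on } \{g\ge 0\}.
\]
With $c_\delta=\delta/(1+\delta)$, this gives both $g'>0$ and $w'\ge \tfrac12 c_\delta w^2$ once $r\ge 1$ and $w$ is large. Largeness of $w$ is guaranteed by your energy bound, since $L\to\infty$.

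\textbf{A minor point.} In the decay step, the cross term from squaring $\sqrt L\ge(1-\eta)r-C$ has to be absorbed, for example via $2Cr\le \eta r^2+C^2/\eta$. This is harmless.
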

		\begin{proof}
			Define 
			$$\hat{u}(r):= -\frac{  u'(r)}{u(r) \sqrt{\abs{ \log u^2(r)}}}.$$
			Then   $\hat{u}(r)>0 $ for $r>c_k$, where we set $c_0=0$ when   $u$ is a ground state. Since $u$ tends to zero  as $r\to \infty$, we have $\log u^2(r)\to -\infty$.   Hence there exists  $\xi_1>0$ such that for all $r>\xi_1$,  
			\begin{equation}
				|u(r)|<1, \quad    \log u^2(r)<0, \quad \text{ and }~~\frac{1}{2}  \sqrt{\abs{ \log u^2(r)}}-\frac{1}{\sqrt{\abs{ \log u^2(r)}} } >2.
			\end{equation}
			It follows from \eqref{radialform} that
			\begin{equation}
				\begin{aligned}
					\hat{u}'&= \hat{u}^2 \sbr{  \sqrt{\abs{ \log u^2}}-\frac{1}{\sqrt{\abs{ \log u^2}} }}-\frac{n-1}{r}\hat{u}-\sqrt{\abs{ \log u^2}} \\
					&= \hat{u}^2 \sbr{ \frac{1}{2}  \sqrt{\abs{ \log u^2}}-\frac{1}{\sqrt{\abs{ \log u^2}} }}-\frac{n-1}{r}\hat{u}+\sbr{\frac{1}{2}\hat{u}^2 -1} \sqrt{\abs{ \log u^2}}\\
					&\geq 2 \hat{u}^2-\frac{n-1}{r}\hat{u}+\sbr{\frac{1}{2}\hat{u}^2 -1} \sqrt{\abs{ \log u^2}}, \quad \text{ for }~ r>\xi_1.
				\end{aligned} 
			\end{equation}
			If there exists some $\hat{r}>\max\lbr{2n,\xi_1,c_k}$ such that $\hat{u}>2$, then 
			\begin{equation}
				\hat{u}'\geq  \hat{u}^2+\sbr{   \hat{u}^2-\frac{1}{2}\hat{u}} +\sqrt{\abs{ \log u^2}}\geq \hat{u}^2   \quad \text{ at } ~~r=\hat{r},
			\end{equation}
			and this inequality continues to hold for all larger  $r>\hat{r}$ until $\hat{u}$ would become infinite at a finite value of $r$. This is impossible. Hence,  $\hat{u}\leq 2$ for all $r>\max\lbr{2n,\xi_1,c_k}$. Since $u$ and $u'$ tend  to zero  as $r\to \infty$, we know that $u^2  \abs{ \log u^2}\to0 $ as  $r\to \infty$. By using L'H\^opital's rule and \eqref{radialform}, we  have
			\begin{equation}
				\begin{aligned}
					\lim_{r\to\infty} 	\hat{u}^2&=	\lim_{r\to\infty} \frac{  u'^2}{u^2  \abs{ \log u^2}}=-\lim_{r\to\infty} \frac{  u'^2}{u^2   \log u^2} =-\lim_{r\to\infty} \frac{ u''}{u   \log u^2+u}\\&=\lim_{r\to\infty} \sbr{ -\frac{\sbr{n-1}\hat{u}}{r} \cdot\frac{\sqrt{\abs{ \log u^2}}}{  \log u^2+1} +\frac{  \log u^2}{   \log u^2+1}   }=1.
				\end{aligned}
			\end{equation}
			Therefore, $\hat{u} \to 1$ as $r\to \infty$, which implies that $\hat{u}>\sqrt{1-2\epsilon}$ for any $\epsilon\in \sbr{0,\frac{1}{2}}$ and sufficiently large $r$. Recall that   $|u(r)|<1$ for $r>\xi_1$. Then there holds 
			\begin{equation}\label{lpq}
				\hat{u}(r)= -\frac{  u'(r)}{u(r) \sqrt{\abs{ \log u^2(r)}}}=\frac{d}{dr} \sqrt{\abs{ \log u^2(r)}}.
			\end{equation}By integration we can obtain that \begin{equation} \label{s2}
				\limsup_{r\to \infty}|u(r)|e^{(\frac{1}{2}-\epsilon)r^2}<\infty.
			\end{equation}Finally, by using L'H\^opital's rule again, we have 
			\begin{equation}\label{s1}
				\begin{aligned}
					\lim_{r\to\infty} \frac{  u'(r)}{ru(r)  }&=	\lim_{r\to\infty}\sbr{\frac{  u'(r)}{u(r) \sqrt{\abs{ \log u^2(r)}}} \cdot  \frac{  \sqrt{\abs{ \log u^2(r)}}}{r}}\\&=- \lim_{r\to\infty} \frac{  \sqrt{\abs{ \log u^2(r)}}}{r}=-	\lim_{r\to\infty} \hat{u}(r)=-1.
				\end{aligned}	 
			\end{equation}Then the second relation in \eqref{decay} follows directly from \eqref{s2} and \eqref{s1}.
			This completes the proof.
		\end{proof}
		\begin{remark}\label{sharpness}
			{\rm The exponent $\frac{1}{2}$ in \eqref{decay} is sharp in the sense that for any $\epsilon>0$,
				\begin{equation}\label{decay22}
					\lim_{r\to\infty}|u(r)|e^{(\frac{1}{2}+\epsilon)r^2}=+\infty, \qquad   \lim_{r\to\infty}|u'(r)|r^{-1}e^{(\frac{1}{2}+\epsilon)r^2}=+\infty.
				\end{equation}
				Indeed, since  $\hat{u}(r) \to1$ as $r\to\infty$,    there exists  $R>\xi_1$ such that  $\hat{u}<\sqrt{1+\epsilon}$ for all $r>R$. Then integrating \eqref{lpq} from $R$ to $r$, there exists a constant $C$ (depending on $R$) such that   
				\[
				\sqrt{|\log u^2(r)|} < \sqrt{1+\epsilon}\,r + C.
				\] Squaring and using $\log u^2(r)<0$ yields
				\begin{equation}
					\log u^2(r)>-(1+\epsilon)r^2-2\sqrt{1+\epsilon}\,Cr-C^2.
				\end{equation}
				Consequently,
				\begin{equation}\label{q222}
					|u(r)| e^{(\frac{1}{2}+\epsilon)r^2} = \exp\Bigl(\frac{1}{2}\log u^2(r) + \bigl(\tfrac12+\epsilon\bigr)r^2\Bigr) > \exp\Bigl(\frac{\epsilon}{2}r^2 - \sqrt{1+\epsilon}\,C r - \frac{C^2}{2}\Bigr) \to +\infty,
				\end{equation}
				as  $r\to\infty$.  The second  limit in \eqref{decay22} follows directly from the first one together with \eqref{s1}. }
		\end{remark}

		\subsection{Pohozaev function and its variants}\label{subsectionPohozaev}
		Consider the Pohozaev function \cite{pohozaev1965} associated with \eqref{radialform}
		\begin{equation} \label{defi of P}
			\begin{aligned}
				P(r)&=2r^n	E(r)+(n-2)r^{n-1}uu'=r^n	[ u'^2+2F(u)]+(n-2)r^{n-1}uu',
			\end{aligned}
		\end{equation}
		and the following related functions
		\begin{equation} \label{defi of P1}
			P_1(r)=r^n	[ u'^2+uf(u)]+(n-2)r^{n-1}uu',
		\end{equation}
		and 
		\begin{equation} \label{defi of P2}
			P_2(r)=r^n	[ u'^2+\frac{n-2}{n}uf(u)]+(n-2)r^{n-1}uu'.
		\end{equation}
		Then we have the following.

		\begin{lemma} \label{lemma positivefunction}
			Let   $u$ be  the solution of \eqref{radialform}. Then  the following statements hold  for   $r\in (0,z_k]$ if $u$ is a $k$-node solution, and for $r\in (0,\infty)$ if $u$ is a ground state or a bound state.   For $n\geq 2$, we have   
			\begin{equation}
				P(r)>0, \quad P_1(r)>0, 
			\end{equation}  
			and $\omega'(r)>0$ whenever  $u(r)\neq 0$, where    $\omega (r)=-{ru'(r)}/{u(r)}$. Moreover, for   $n\geq 3$  we also have
			\begin{equation}
				P_2(r)>0, \quad  -\mbr{{P(r)}/{r^n}}' >0.  
			\end{equation}
			
		\end{lemma}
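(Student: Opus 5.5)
The plan is to compute the derivatives of the Pohozaev-type functions explicitly for $f(u)=u\log u^2$ and read off signs from the monotonicity structure of $|u|$ given by Lemma \ref{lemma nodalcriticalpoint}. First, using $u''=-\frac{n-1}{r}u'-f(u)$, the $u'^2$ and $uu'$ terms in $P'$ cancel, and since $2nF-(n-2)uf=u^2(2\log u^2-n)$ we obtain
\begin{equation}
P'(r)=r^{n-1}u^2\bigl(2\log u^2-n\bigr),\qquad P(0)=0 .
\end{equation}
Thus $P'>0$ exactly where $|u|>e^{n/4}$. Next, $P_1=P+r^n(uf-2F)=P+r^nu^2$, so $P_1>P$ and $P_1>0$ follows from $P>0$. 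A direct computation also gives
\begin{equation}
r\omega'=\frac{P_1}{r^{n-2}u^2},
\end{equation}
because $r\omega'=(n-2)\frac{ru'}{u}+r^2\frac{u'^2}{u^2}+r^2\log u^2$. Hence $\omega'>0$ wherever $u\neq0$, once $P>0$ is known.

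To prove $P>0$, I would use the boundary values together with the shape of $|u|$. At every zero we have $P(z_i)=z_i^nu'(z_i)^2>0$. For a ground state or bound state, Lemma \ref{lemma decay} shows $r^nu'^2$, $r^nu^2|\log u^2|$ and $r^{n-1}|uu'|$ all tend to $0$, so $P(r)\to0$ as $r\to\infty$. By Lemma \ref{lemma nodalcriticalpoint}, on each $[z_i,z_{i+1}]$ the function $|u|$ increases up to $c_i$ and then decreases, so $\{|u|>e^{n/4}\}$ there is a (possibly empty) interval around $c_i$. The same holds on $[z_k,\infty)$ for a bound state, and on $[0,z_1]$, $[0,\infty)$ the function $|u|$ is decreasing.

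\begin{itemize}
\item[] \emph{Rising part.} On the part where $|u|$ rises we have $uu'\ge0$, and $E>0$ by Lemma \ref{lemma propertyE}. For a $k$-node solution restricted to $(0,z_k]$ this holds because $E$ is decreasing and $E(z_k)=u'(z_k)^2/2>0$. Therefore $P=2r^nE+(n-2)r^{n-1}uu'>0$ there.
\item[] \emph{Falling part.} After $c_i$, $P$ first increases while $|u|>e^{n/4}$ and then decreases to the positive value $P(z_{i+1})$, or to the limit $0$ on the last tail. It therefore stays positive.
\item[] \emph{First interval.} On $[0,z_1]$, or on $[0,\infty)$ for a ground state, $|u|$ decreases from $\alpha$. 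If $\alpha\le e^{n/4}$, then $P'\le0$ with $P(0)=0$, which contradicts $P(z_1)>0$, respectively $P\to0$ with $P\not\equiv0$. Hence $\alpha>e^{n/4}$, and $P$ increases from $0$ and then decreases to a nonnegative endpoint value, so $P>0$ on the open interval.
\end{itemize}

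For $n\ge3$, a short computation gives
\begin{equation}
-\Bigl(\frac{P}{r^n}\Bigr)'=\frac{n}{r^{n+1}}P_2,\qquad P_2=P+r^nu^2\Bigl(1-\frac2n\log u^2\Bigr),
\end{equation}
so the last claim reduces to $P_2>0$. Where $|u|\le e^{n/4}$ we get $P_2\ge P>0$. Differentiating gives
\begin{equation}
P_2'=\frac2n\,r^n uu'\bigl(n-2-2\log u^2\bigr),
\end{equation}
and on the interval around $c_i$ where $|u|>e^{n/4}>e^{(n-2)/4}$, $P_2'$ has the sign of $-uu'$. Thus $P_2$ decreases up to $c_i$ and increases afterwards. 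Its minimum there is $P_2(c_i)=c_i^n\frac{n-2}{n}u^2\log u^2>0$, since $|u(c_i)|>\alpha_*>1$. On the initial interval $uu'<0$ and $P_2(0)=0$, so $P_2$ increases and is positive.

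The main obstacle is the bookkeeping of the first interval, where positivity of $P$ requires $\alpha>e^{n/4}$; this is obtained indirectly from the boundary behaviour of $P$. A second point to check is the limit $P(\infty)=0$, which relies on the Gaussian decay of Lemma \ref{lemma decay} rather than condition (C3).
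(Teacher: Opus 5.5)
Your proposal is correct and uses the same ingredients as the paper's sketch: the values $P=P_1=P_2=z_i^n u'(z_i)^2>0$ at the zeros, and $P,P_1,P_2\to 0$ at infinity via Lemma \ref{lemma decay}. Where the paper defers to Tang's Proposition 2.5, you carry the argument out explicitly for $f(u)=u\log u^2$ through $P'=r^{n-1}u^2(2\log u^2-n)$, $P_1=P+r^nu^2$, $r\omega'=P_1/(r^{n-2}u^2)$ and $-(P/r^n)'=nP_2/r^{n+1}$. One wording fix: on the initial interval $P_2$ increases only while $u>e^{(n-2)/4}$, so state that claim on the segment where $u>e^{n/4}$, and let $P_2\ge P>0$ cover the rest.
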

		\begin{proof}
			This lemma follows the same strategy of \cite[Proposition 2.5]{TangInvent2026}, which deals with the general nonlinearity case.  Although the logarithmic nonlinearity $u\log u^2$ does not satisfy condition (C3) in \cite{TangInvent2026}, the proof can nevertheless be carried out with the aid of Lemma \ref{lemma decay}. For completeness, we sketch it here.  Observe that if $u$ is a $k$-node solution with zeros $z_1<\cdots<z_k$, then $	P(z_i)= P_1(z_i)=	P_2(z_i)= z_i^n u'^2(z_i)>0$ for $1\leq i\leq k$. Moreover, by Lemma \ref{lemma decay}, we know that $|u(r)|$ and  $|u'(r)|$ decay to 0 with a Gaussian-type decay behavior.  Hence,  
			\begin{equation} \label{decayeee}
				\lim_{r\to\infty} r^nu'^2=0, \quad 	\lim_{r\to\infty}r^{n-1}uu'=0, \quad \lim_{r\to\infty} r^nu^2=0, \quad \lim_{r\to\infty} r^nu^2\log u^2=0,
			\end{equation}
			where the last relation follows from the  inequality  $ |s^2\log s^2|\leq |s|$ for $|s|$ small enough. Consequently, we know that
			$P(r)$, $P_1(r)$, and  $P_2(r)$  all approach zero as $r\to \infty$.   Then the proof follows word for word that of  \cite[Proposition 2.5]{TangInvent2026}.  
		\end{proof}
		
		
		\section{Proof of the main results} \label{SectionMainresult}
		
		This section is dedicated to the proof of Theorems \ref{Thm} and \ref{theoremmain1}. 
		Note that the  logarithmic   nonlinearity $f(u)=u\log u^2$  is continuous on $[0,\infty) $, continuously differentiable on $(0,\infty)$ and satisfies   $f'\in L^1(0,1) $. 
		Then $u(r,\alpha)$ and $u'(r,\alpha)=\frac{\partial}{\partial r}u(r,\alpha)$ are of class  $C^1$ in $\sbr{0,\infty}\times \sbr{1,\infty}$; see \cite{Cortazar2009}. We set 
		\begin{equation}
			v(r,\alpha):=\partial u(r,\alpha)/\partial \alpha.
		\end{equation}
		Then   for any $r>0$ such that $u(r)\neq 0$, $v$ satisfies the linear differential equation    
		\begin{equation}\label{equv}
			v''+\frac{n-1}{r} v' + (\log u^2+2) v=0,  \quad v(0)=1, \quad v'(0)=0.
		\end{equation}
		In order to characterize the sign-changing structure of   $v$ and its  asymptotic behavior   at infinity, we  introduce some  important auxiliary functions:
		\begin{equation} \label{defi of T2}
			\begin{aligned} 
				&Q(r)=r^{n}\mbr{u'v'+f(u)v}+(n-2)r^{n-1}u'v, \\
				& M(r)=r^{n-1}\sbr{u'v-uv'}, \\
				&	T_1(r)=Q(r)-g_1(u)M(r),  \\
				&  T_2(r)=Q(r)-g_2(u)M(r), 
			\end{aligned} 
		\end{equation}
		where 
		\begin{equation}
			\begin{aligned}
				&g_1(u)=2f(u)/\mbr{uf'(u)-f(u)}=\log u^2,\\
				& g_2(u)=2F(u)/\mbr{uf(u)-2F(u)}=\log u^2-1.
			\end{aligned}
		\end{equation}
		Obviously, 
		\begin{equation}
			g_2(u)=g_1(u)-1, \quad \text{ and } \quad T_2(r)=T_1(r)+M(r).
		\end{equation}
		The auxiliary functions $Q$, $M$, $T_1$, $T_2$  have been widely used for power-type nonlinearities (see e.g. \cite{TangInvent2026, Coffman1972, FrankSummer, Kwong1989, M1993, Tao2006, TangJDE2003}). For the logarithmic nonlinearity $f(u)=u\log u^2$
		considered here, however, the sign of these functions does not follow directly from those results. A careful re-evaluation is required, because the logarithmic dependence alters the sign relations that are crucial for the analysis of $v$. Nevertheless, the same functional framework remains essential, and we therefore revisit the sign conditions in the logarithmic setting.
		Most of this section is devoted to determining the signs of $Q$, $M$, $T_1$, $T_2$. To this end,  we define  
		\begin{equation}\label{defi of Qi}
			Q_i(r)=Q(r)+ir^{n-1}u'v,
		\end{equation}   and use the following derivative formulas, 
		which follow from \cite[Appendix~A.3]{TangInvent2026}:
		\begin{equation}\label{defi of deriQMT}
			\begin{aligned}
				&Q'(r)=2r^{n-1} f(u)v=2r^{n-1} \sbr{u\log u^2 }v,\\&Q_i'(r)=r^{n-1}[iu'v'+(2-i)f(u)v], \\
				&M'(r)=r^{n-1}\mbr{uf'(u)-f(u)}v=2r^{n-1}uv,\\
				&T_1'(r)=-g_1'(u)u'M(r)=-\frac{2u'}{u} M(r),\\
				& T_2'(r)=T_1'(r)+M'(r)=-\frac{2u'}{u}   M(r)+2r^{n-1}uv= \frac{2}{r} \mbr{ \omega(r)M(r)+r^{n}uv },
			\end{aligned}
		\end{equation}
		where $\omega(r)$ is defined in Subsection \ref{subsectionPohozaev}.

		\subsection{The analysis of the ground state}
		In the proof
		of the uniqueness of ground states in $\R^n$, $n\geq 2$, it is 
		essential  to show that $v$ changes sign  exactly  once on $\sbr{0,\infty}$, and that $v$  diverges at infinity. By Lemma \ref{lemma existcriticalpoint}, there exists a unique $r_1\in \sbr{0,\infty}$ such that $u(r_1)=1$. Then we have the following.
		
		\begin{proposition}\label{prop vgroundstate}
			Let $n \geq 2$ and  $u=u(r,\alpha)$ be a ground state  of \eqref{equlog}.  Then there exists $\tau\in \sbr{0,r_1}$   such that  
			\begin{equation}\label{j000}
				v(r)>0 ~ \text{ in } \sbr{0,\tau}, \quad 	v(\tau)=0, \quad 	v(r)<0~ \text{ in } \sbr{ \tau,\infty}.  
			\end{equation}
			Moreover,   $v(r)$  is strictly decreasing for sufficiently large $r$ and 
			\begin{equation}
				\lim_{r\to\infty} 	v(r)=-\infty.
			\end{equation}
			Furthermore, we have 
			\begin{equation}\label{i2}
				Q(r),~ M(r),~  T_1(r),~   T_1'(r)>0 ~~ \text{ for } ~r \in \sbr{0,\infty}.
			\end{equation}
		\end{proposition}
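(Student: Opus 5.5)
We follow the Kwong/Tang strategy, using the auxiliary functions $Q$, $M$, $T_1$ and the derivative formulas \eqref{defi of deriQMT}, with Lemma \ref{lemma decay} substituting for condition (C3) at infinity.

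\textbf{Step 1: $v$ must vanish before $r_1$.} Near $r=0$ we have $v(0)=1>0$, and $M(r)=r^{n-1}(u'v-uv')$ with $M'=2r^{n-1}uv>0$ while $v>0$. Suppose $v>0$ on $(0,r_1]$. Since $u>1$ there, a Sturm comparison between $u$ and $v$ gives a contradiction. Indeed, $u$ solves $u''+\frac{n-1}ru'+(\log u^2)u=0$, and $v$ solves the same equation with the larger potential $\log u^2+2$. Hence $v$ oscillates faster than $u$. Concretely, integrate $(r^{n-1}(u'v-uv'))'=2r^{n-1}uv$ on $(0,r)$. If $v>0$ on $(0,r_1]$, then $M>0$ there, and combining this with $Q'=2r^{n-1}(u\log u^2)v>0$ on $(0,r_1)$ gives $Q(r_1)>0$. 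The plan is to show this is incompatible with $T_1(r_1)=Q(r_1)$, using the sign information on $T_1$ obtained below. Failing that, we use Kwong's standard argument: $v$ must have a zero $\tau$ before $u$ first reaches $1$, because otherwise the Wronskian identity on $(0,r_1)$ against $u-1$ fails. The function $w=u-1$ satisfies $w''+\frac{n-1}rw'+u\log u^2=0$, and $u\log u^2\ge 2(u-1)$ for $u\ge1$ by convexity. This gives a comparison showing that $v$ changes sign in $(0,r_1)$.

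\textbf{Step 2: signs of $Q$, $M$, $T_1$ on $(0,\infty)$ and no second zero.} On $(0,\tau)$ we have $v>0$, so $M'>0$ and $M>0$ (as $M(0)=0$). Also $Q'>0$ since $u>1$ there, so $Q>0$ with $Q(0)=0$. Moreover $T_1'=-\frac{2u'}{u}M>0$ because $u'<0$ (Lemma \ref{lemma existcriticalpoint}). Thus $T_1>0$ as long as $M>0$.

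Past $\tau$, suppose $v$ has a second zero $\tau_2$, taken to be the first. At $\tau_2$ we get $M(\tau_2)=r^{n-1}u'v\cdot 0-\dots=-r^{n-1}uv'(\tau_2)$, which is $\geq0$ since $v$ is increasing through the zero. We also get $Q(\tau_2)=r^n u'v'$, which is $\le 0$. Hence $T_1(\tau_2)=Q-\log u^2\,M$. We split by the position of $\tau_2$ relative to $r_1$, using the sign of $\log u^2$. We show that $M$ stays positive on $(\tau,\tau_2)$ by a continuation argument: while $T_1>0$ and $M>0$, we obtain $Q>\log u^2\, M$. This contradicts the sign pattern at $\tau_2$ in each case, following \cite[Section 3]{TangInvent2026}. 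Hence $v<0$ on $(\tau,\infty)$ and $M,Q,T_1,T_1'>0$ throughout, which is \eqref{i2}.

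\textbf{Step 3: divergence of $v$; the main obstacle.} We have $M>0$ and $M'=2r^{n-1}uv<0$ for $r>\tau$, so $M$ decreases to a limit $L\ge0$. Write $(v/u)'=-M/(r^{n-1}u^2)$, so that
\[
\frac{v(r)}{u(r)}=\frac{v(R)}{u(R)}-\int_R^r\frac{M(s)}{s^{n-1}u^2(s)}\,ds .
\]
By Lemma \ref{lemma decay}, $u^2\le Ce^{-(1-2\epsilon)r^2}$. If $L>0$, then $v/u\le -c\,e^{(1-2\epsilon)r^2}$ roughly. Multiplying by $u\ge e^{-(1/2+\epsilon)r^2}$ (Remark \ref{sharpness}) gives $v\to-\infty$. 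The delicate point is excluding $L=0$. For this we use $T_1>0$ with $\log u^2\to-\infty$: we have $Q>\log u^2\,M$. We also need $Q\to$ a limit with a controlled sign; note $Q'=2r^{n-1}u\log u^2\,v>0$ for $u<1$ and $v<0$, so $Q$ increases. If $M\to0$, then using $v/u$ bounded above we derive $Q\to0$, which contradicts $Q$ positive and increasing. Finally, $v'<0$ for large $r$ follows from $v'=(u'v-r^{1-n}M)/u$: here $u'v>0$ is small relative to $M/r^{n-1}$, or both terms are controlled via $u'/u\sim-r$. This gives eventual monotonicity of $v$. Making the L'H\^opital-type comparison in this step rigorous with the Gaussian rates of Lemma \ref{lemma decay} is where I expect the main difficulty.
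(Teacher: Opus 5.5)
\textbf{Steps 1 and 2.} Your Step 1 fallback works. For $w=u-1$ one has $\bigl(r^{n-1}(w'v-wv')\bigr)'=2r^{n-1}v\,(u-1-\log u)$, so the inequality you actually need is $(u-1)f'(u)>f(u)$, not $f(u)\ge 2(u-1)$. This is a legitimate alternative to the paper's Wronskian $W=r^{n-1}[f'(u)u'v-f(u)v']$. Your first ``plan'' there, via $T_1(r_1)=Q(r_1)$, gives no contradiction.

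The genuine gap is in Step 2, and it starts with a sign error. At a second zero $\tau_2$ with $v'(\tau_2)>0$ and $u>0$, one has $M(\tau_2)=-\tau_2^{n-1}u\,v'(\tau_2)<0$. So everything reduces to proving $M>0$ on $(0,\infty)$; once that is known, $\tau_2$ is excluded at once and no case split on $\log u^2$ is needed.

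Your continuation argument does not prove $M>0$. On $(\tau,\tau_2)$ one has $M'=2r^{n-1}uv<0$. At the first zero $\bar\tau$ of $M$, positivity of $T_1$ gives only $Q(\bar\tau)=T_1(\bar\tau)>0$, which contradicts nothing locally.

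The missing idea is the Pohozaev-type function $P_1$ of \eqref{defi of P1}. At a zero of $M$ one has $u'v=uv'$, hence $P_1(\bar\tau)=Q(\bar\tau)\,u(\bar\tau)/v(\bar\tau)<0$. Here $v(\bar\tau)<0$, since $v(\bar\tau)=0$ would force $v'(\bar\tau)=0$ and hence $Q(\bar\tau)=0$. This contradicts $P_1>0$ from Lemma~\ref{lemma positivefunction}. That positivity rests on $P_1\to0$ at infinity, i.e.\ on the decay in Lemma~\ref{lemma decay}. Your Step 2 never uses the behaviour at infinity, and local sign information alone does not close the argument.

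\textbf{Step 3.} Your dichotomy on $L=\lim_{r\to\infty}M(r)$ aims to exclude something that actually happens: $L=0$. For the Gausson, $v$ solves $v''+\frac{n-1}{r}v'+(n+2-r^2)v=0$. Its solutions are combinations of $e^{r^2/2}r^{-(n+1)}(1+o(1))$ and $r e^{-r^2/2}(1+o(1))$. With $v\sim -Ce^{r^2/2}r^{-(n+1)}$, $C>0$, one finds $M\approx -2r^nuv\sim 2Ce^{n/2}/r\to0$, while $Q\sim 2Ce^{n/2}r\to+\infty$. So $L=0$ cannot be excluded, your $L>0$ branch is vacuous there, and the implication ``$M\to0\Rightarrow Q\to0$'' is false.

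The paper argues instead by contradiction on the boundedness of $v$:
\begin{itemize}
\item Since $\log u^2+2<0$ and $v<0$ for large $r$, $(r^{n-1}v')'=-r^{n-1}(\log u^2+2)v<0$, so $v$ is eventually monotone.
\item If $v$ had a finite limit, the Gaussian bounds \eqref{decay} force $Q\to0$. When $r^{n-1}v'\to-\infty$, this uses L'H\^opital together with \eqref{convergence}.
\item This contradicts the fact that $Q$ is positive and increasing on $(r_1,\infty)$.
\end{itemize}
The same monotonicity of $r^{n-1}v'$ shows that $v$ is eventually strictly decreasing. It replaces your heuristic comparison of $u'v$ with $r^{1-n}M$.
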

		\begin{proof}
			Consider  the Wronskian of $u'$ and $v'$ 
			\begin{equation}\label{defi of W}
				W(r):=r^{n-1}(u''v'-u'v'')=r^{n-1}\mbr{f'(u)u'v-f(u)v'}.
			\end{equation}
			Then  by the Wronskian identity  \cite[(B.16)]{Tao2006}, we have 
			\begin{equation}\label{deri w}
				W'(r)=r^{n-1}f''(u)u'^2v=2r^{n-1}(u'^2/u)v.
			\end{equation}We first prove  that $v(r)$  changes sign in $(0,r_1)$. If the statement is false,  then $v>0$ in $(0,r_1)$. It follows from \eqref{deri w}  and $u>1$ in $(0,r_1)$ that   $$W'(r)=  \frac{2r^{n-1}u'^{2}v}{u} >0 \quad \text{in} ~~(0,r_1).$$   Hence, we have  $W(r_1)>W(0)=0$. However, since  $f(u(r_1))=f(1)=0$, $f'(u(r_1))=f'(1)>0$, $u'(r_1)<0$, and $v(r_1)>0$, we have $W(r_1)\leq0$. This is a contradiction.  Therefore,  $v(r)$   changes sign in $(0,r_1)$, and  there is $\tau\in(0,r_1)$ such that 
			\begin{equation} \label{j11}
				v(r)>0 ~ \text{ in } \sbr{0,\tau}, \quad 	v(\tau)=0, \quad 	v(r)<0~ \text{ in } \sbr{ \tau,r_1}, \quad \text{ and }~~ v'(\tau)<0. 
			\end{equation} 
			Then  it follows from \eqref{defi of deriQMT} that $ 	Q'(r)>0$ and $	M'(r)>0$ for $ r\in \sbr{0,\tau } $. By Lemma \ref{lemma existcriticalpoint}, we have $u'<0$ in $\sbr{0,\infty}$.
			Since $Q=M=T_1=0$ at $r=0$, we obtain  
			\begin{equation}\label{ik3}
				Q(r), ~ M(r), ~ T_1(r),~  T_1'(r)>0, \quad \text{ for } ~~r\in (0,\tau ].
			\end{equation}
			We now prove that 
			\begin{equation}\label{i1}
				M(r)>0, \quad \text{ for } ~r \in \sbr{0,\infty}. 
			\end{equation}
			With this   result, we can show that $\tau$ is the unique zero of  $v(r)$ on $\sbr{0,\infty}$; thus the first part of \eqref{j000} holds.    Indeed, suppose $v$ has a second zero $\xi$ in $(\tau,\infty)$. Then  $v(\xi)=0$ and $v'(\xi)>0$, which implies $M(\xi)=-\xi^{n-1}u(\xi)v'(\xi)<0$. This contradicts \eqref{i1}.

			\vskip0.04in

			If the statement \eqref{i1} is false,  then by \eqref{ik3} there exists $\bar{\tau}\in \sbr{\tau,\infty }$ such that 
			\begin{equation} \label{s3}
				M(r)>0 ~\text{ in } \sbr{0,\bar{\tau}},\quad 	M(\bar{\tau})=0, ~~ \text{ and } ~~v(r)<0  ~\text{ in }\sbr{\tau,\bar{\tau}}.
			\end{equation}
			Therefore,  $T_1'(r)>0$ for all $r\in \sbr{0,\bar{\tau}}$,   from which we obtain
			\begin{equation}
				Q(\bar{\tau})=	T_1(\bar{\tau})+g_1(u(\bar{\tau}))M(\bar{\tau})=	T_1(\bar{\tau})>T_1(0)=0.
			\end{equation}
			Hence, $\bar{\tau}$ cannot be a zero of $v$ and $v(\bar{\tau})<0$. From  $	M(\bar{\tau})=0$, we know that  $u'v=uv'>0$ and $u'/v'=u/v<0$ at $r=\bar{\tau}$. Hence,  
			\begin{equation}
				\begin{aligned}
					P_1(\bar{\tau})&=\bar{\tau}^n	[ u'v' \cdot\frac{u'}{v'}+ f(u)v\cdot\frac{u}{v}]+(n-2)\bar{\tau}^{n-1}uv' \frac{u'}{v'}\\
					&=Q(\bar{\tau})\cdot\frac{u(\bar{\tau})}{v(\bar{\tau})}<0,
				\end{aligned}
			\end{equation}
			which contradicts Lemma \ref{lemma positivefunction} and proves \eqref{i1}. Hence,  $\tau$ is the unique zero of  $v(r)$ on $\sbr{0,\infty}$ such that  
			\begin{equation}
				v(r)>0 ~ \text{ in } \sbr{0,\tau}, \quad 	v(\tau)=0, \quad 	v(r)<0~ \text{ in } \sbr{ \tau,\infty}.
			\end{equation}
			Moreover,  we deduce from  \eqref{defi of deriQMT}  and \eqref{i1} that 
			\begin{equation}\label{i3}
				T_1'(r)>0~~ \text{ and }~~ T_1(r)>T_1(0)=0, ~~\text{ for all }~r>0.
			\end{equation}
			From which, together with the facts $f(u(r_1))=f(1)=0$ and  $g_1(u(r_1))=0$, we have $Q(r_1)=T(r_1)>0$.   By \eqref{defi of deriQMT}, we know that $Q(r)$   increases from $Q(0)=0$ in $(0,\tau)$, decreases  in $(\tau,r_1)$, and then increases in $(r_1,\infty)$. Then we obtain that
			\begin{equation}\label{i4}
				Q(r)>0, ~~\text{ for all }~r>0.
			\end{equation}
			Finally, we show that $$\lim_{r\to\infty} 	v(r)=-\infty.$$  Since $u$ is positive for all $r>0$ and  approaches  $0$ as $r\to \infty$, it follows from \eqref{equv} that, for   $r   $ large enough
			\begin{equation} \label{o999}
				\sbr{r^{n-1}v'}'=-r^{n-1}(\log u^2+2) v<0,
			\end{equation}
			so $r^{n-1}v'$ is strictly decreasing for all large  $r   $.  Consequently, 
			$v$  is monotone for  sufficiently large $r   $. Suppose, for contradiction, that $v(r)$ is bounded. Then $v(r)$  admits a finite limit $  L:=\lim_{r\to\infty} v(r)$.  Now we claim that 
			\begin{equation}\label{lim_of_Q}
				\lim_{r\to\infty}Q(r)=0.
			\end{equation}
			Indeed, from the decay estimate \eqref{decay}, we have  
			\begin{equation}
				\lim_{r\to\infty}  r^n f(u)v =     \lim_{r\to\infty}  r^n (u \log u^2) v=0, \qquad \lim_{r\to\infty} r^n u'v=0.
			\end{equation}
			It remains to show that 
			\begin{equation}
				\lim_{r\to\infty}r^nu'v'=0.
			\end{equation}
			By \eqref{o999}, we know that $r^{n-1}v'$ is strictly monotonically decreasing for  sufficiently large $r   $. We distinguish two cases. 
			\vskip0.1in
			\noindent {Case 1}. $r^{n-1}v'$ has a finite limit. Then, using    the decay estimate \eqref{decay}, we obtain
			\begin{equation}     
				\lim_{r\to\infty}r^nu'v'=\lim_{r\to\infty}ru'(r^{n-1}v')=0.
			\end{equation} 
			{Case 2}. $r^{n-1}v'$ is unbounded and tends to $-\infty$.  By L'H\^opital's rule, we deduce from \eqref{convergence}, \eqref{o999} that $$\lim\limits_{r\to\infty}\frac{v'}{r^3}=\lim\limits_{r\to\infty}\frac{r^{n-1}v'}{r^{n+2}}=-\frac{L}{n+2}\lim\limits_{r\to\infty}\frac{\log u^2+2}{r^2}=-\frac{L}{n+2}\lim\limits_{r\to\infty}\frac{u'(r)}{ru(r)}=\frac{L}{n+2},$$ 
			Then by \eqref{decay}, we obtain that 
			\begin{equation}  \lim_{r\to\infty}r^nu'v'=\lim_{r\to\infty}r^{n+3}u'\cdot \frac{v'}{r^3}=0.
			\end{equation}
			Thus the claim \eqref{lim_of_Q} is proved.  
			However, by \eqref{i4} and  $Q'(r)>0$ for $r>r_1$,  we know that $\lim_{r\to\infty}Q(r)> Q(r_1)>0$, which contradicts \eqref{lim_of_Q}.  Hence, we have  $\lim_{r\to\infty} 	v(r)=-\infty$.  This completes the proof. 
			
			%
			%
		\end{proof}

		\subsection{The analysis of  nodal solutions}
		
		We now turn to the analysis of nodal solutions. Throughout this subsection, we assume that $n\geq 3$. Unlike the ground state case, a nodal solution has multiple zeros and changes sign, which makes the analysis more delicate. Let $u$ be a nodal solution of \eqref{radialform} with exactly  $k$ zeros $z_1<\cdots<z_k$, $k\geq 1$. Then by Lemma \ref{lemma positivenodalsolution}, $u$ is either a bound state or an oscillatory nodal solution. By  Lemma  \ref{lemma nodalcriticalpoint}, we can decompose  $\sbr{0,\infty}$ into  the following intervals
		\begin{equation}
			(0,\infty)=\begin{cases}
				\bigcup_{i=1}^{k-1} {(c_{i-1},c_i]}\cup {(c_{k-1},z_k)}\cup {(z_k,\infty)}, &\quad \text{ if } ~ u ~\text{ oscillates   in  }\sbr{z_k,\infty},\\
				\bigcup_{i=1}^k  {(c_{i-1},c_i]}  \cup  {(c_k,\infty)}, &\quad \text{ if } ~ u~ \text{  is a bound state},
			\end{cases}
		\end{equation}
		where $c_0=0$. In this subsection, we  borrow some ideas from \cite{TangInvent2026} to investigate the behavior of $v$ on these intervals.  A crucial role is played by the bridge function  $B_a(r)$ 
		\begin{equation}\label{defi of Ba}
			B_a(r)=Q(r)-aM(r)-2F_a(u(r)) \cdot \frac{r^{n-1}v}{u'}, \quad \text{ and } \quad B_a'(r)=-2F_a(u(r))\cdot \frac{Q_n(r)}{ru'^2},
		\end{equation}
		where $a$ is an arbitrary constant, $Q_n(r)$ is defined in \eqref{defi of Qi} and 
		\begin{equation}
			F_a(u(r)) =F(u(r))-\frac{a}{2}\mbr{uf(u)-2F(u)}.
		\end{equation}
		Within a single phase $(c_{i-1},c_i]$, the functions $u$, $f(u)$, and $F(u)$ all change signs. To handle this, we introduce points satisfying
		\begin{equation}
			c_{i-1}<b_i<r_i<z_i<\bar{r}_i<\bar{b}_i<c_i,
		\end{equation}
		and define the absolute values at these points as
		\begin{equation}
			|u(b_i)|=|u(\bar{b}_i)|=\alpha_*, \quad |u(r_i)|=|u(\bar{r}_i)|=1,
		\end{equation}
		where   $\alpha_*=e^{1/2}$ is the unique positive number satisfying  $F(\alpha_*)=0$. 
		
		\vskip0.08in
		
	First, we show that  $v$  changes sign  exactly once on  $(0,z_1]$.

	\begin{proposition}\label{prop phase1 one zero}
		Let $u=u(r,\alpha)$ be    a  nodal solution of \eqref{radialform}. Then there exists $\tau_1\in(0,r_1)$ such that $v(r)>0$ in $(0,\tau_1)$, $v(\tau_1)=0$, and $v(r)<0$ in $(\tau_1,z_1]$. Moreover, we have 
		\begin{equation}
			Q(r),~M(r)>0,~~r\in(0,z_1];\quad T_1(r),~T_1'(r)>0,~~r\in(0,z_1).
		\end{equation}
	\end{proposition}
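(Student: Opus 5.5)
We follow the argument of Proposition \ref{prop vgroundstate}, now on the interval $(0,z_1]$ instead of $(0,\infty)$. By Lemma \ref{lemma nodalcriticalpoint}, $u$ is decreasing on $(0,z_1]$ with $u>0$ on $(0,z_1)$. The points $r_1<z_1$ with $u(r_1)=1$ and $b_1<r_1$ with $u(b_1)=\alpha_*$ exist since $\alpha>\alpha_*$. Moreover $u'<0$ on $(0,z_1]$, because $c_1>z_1$.

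\textbf{Step 1: $v$ changes sign in $(0,r_1)$.} We use the Wronskian $W(r)=r^{n-1}[f'(u)u'v-f(u)v']$, which satisfies $W'=2r^{n-1}(u'^2/u)v$ on $(0,r_1)$. Suppose $v>0$ on $(0,r_1)$. Then $W(r_1)>W(0)=0$. On the other hand, $f(1)=0$, $f'(1)=2$ and $u'(r_1)<0$ give $W(r_1)=r_1^{n-1}f'(1)u'(r_1)v(r_1)\le 0$, a contradiction. Let $\tau_1$ be the first zero of $v$. Then $v>0$ on $(0,\tau_1)$ and $v'(\tau_1)<0$, since a double zero would force $v\equiv 0$.

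\textbf{Step 2: signs of $Q$, $M$, $T_1$ on $(0,\tau_1]$.} On $(0,\tau_1)$ we have $u>1$, so $f(u)>0$ and $v>0$. Hence $Q'=2r^{n-1}f(u)v>0$ and $M'=2r^{n-1}uv>0$. Since $Q(0)=M(0)=0$, it follows that $Q,M>0$ on $(0,\tau_1]$. Next, $T_1'=-(2u'/u)M>0$ and $T_1(0)=0$, so $T_1>0$ on this interval as well.

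\textbf{Step 3: $M>0$ on $(0,z_1]$, and hence no second zero of $v$.} This is the central step. Suppose $M$ has a first zero $\bar\tau\in(\tau_1,z_1]$.
\begin{itemize}
\item If $\bar\tau=z_1$, then $M(z_1)=z_1^{n-1}u'(z_1)v(z_1)$. This is nonpositive only if $v(z_1)\ge 0$, and we must exclude that case separately. If $v$ has a zero $\xi\in(\tau_1,z_1]$, then at the first such $\xi$ we have $v'(\xi)\ge0$. For $\xi<z_1$ this gives $M(\xi)=-\xi^{n-1}u v'(\xi)\le 0$, and $M(\xi)<0$ unless $v'(\xi)=0$, which is impossible. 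So it suffices to prove $M>0$ on $(0,z_1)$ and to treat the endpoint separately.
\item On $(0,\bar\tau)$, since $M>0$ and $u'<0$, we get $T_1'>0$. Therefore $Q(\bar\tau)=T_1(\bar\tau)+g_1(u)M(\bar\tau)=T_1(\bar\tau)>0$. In particular $v(\bar\tau)\ne0$, because at a zero of $v$ one has $Q=r^nu'v'$ and $M=-r^{n-1}uv'$ with opposite signs. Thus $v(\bar\tau)<0$.
\item The condition $M(\bar\tau)=0$ gives $u'/v'=u/v$. As in the ground state case, this yields $P_1(\bar\tau)=Q(\bar\tau)\,u(\bar\tau)/v(\bar\tau)<0$, contradicting Lemma \ref{lemma positivefunction}, which holds on $(0,z_1]$.
\end{itemize}

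At the endpoint $z_1$ itself we argue as follows. If $v(z_1)=0$, then $Q(z_1)=z_1^nu'v'$ and $M(z_1)=-z_1^{n-1}uv'=0$, since $u(z_1)=0$. Meanwhile $T_1$ is increasing, so $Q$ stays positive; this forces $v'(z_1)<0$, hence $v>0$ just to the left of $z_1$, contradicting the absence of zeros of $v$ in $(\tau_1,z_1)$. So $v(z_1)<0$, and $M(z_1)=z_1^{n-1}u'(z_1)v(z_1)>0$.

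\textbf{Step 4: $Q>0$ and $T_1>0$ on $(0,z_1]$.} With $M>0$ and $u'<0$ on $(0,z_1)$ we get $T_1'>0$, hence $T_1>0$ there. For $Q$, note that $Q'$ is positive on $(0,\tau_1)$, negative on $(\tau_1,r_1)$, and positive on $(r_1,z_1)$, where $f(u)<0$ and $v<0$. At $r_1$ we have $Q(r_1)=T_1(r_1)>0$ because $g_1(1)=0$. Hence $Q>0$ on $(0,z_1)$. At $z_1$ itself, $Q(z_1)=z_1^n u'v'+(n-2)z_1^{n-1}u'v$; its positivity follows by continuity and monotonicity, since $Q$ is increasing on $(r_1,z_1)$.

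\textbf{Main obstacle.} We expect the main difficulty to be the endpoint $r=z_1$, where $g_1(u)=\log u^2\to-\infty$ and $T_1$ is not defined. This is why the statement asserts $T_1,T_1'>0$ only on the open interval, and why $M(z_1)$ and $Q(z_1)$ must be handled by the direct formulas and limiting arguments above.
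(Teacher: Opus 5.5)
Your proof is correct and follows the paper's intended route. The paper omits the details and only says to rerun the proof of Proposition~\ref{prop vgroundstate} with $\infty$ replaced by $z_1$: the Wronskian sign change on $(0,r_1)$, the monotonicity of $T_1$, the contradiction $P_1(\bar\tau)=Q(\bar\tau)u/v<0$ at a first zero of $M$, and the piecewise monotonicity of $Q$ anchored at $Q(r_1)=T_1(r_1)>0$. Your separate handling of the endpoint $z_1$, where $g_1(u)$ blows up, is a sensible addition.

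There is one small slip in Step 3. At a zero of $v$, $Q=r^nu'v'$ and $M=-r^{n-1}uv'$ have the \emph{same} sign, because $u'<0<u$. The correct reason $v(\bar\tau)\neq 0$ is that $M(\bar\tau)=0$ would then force $v'(\bar\tau)=0$, so $Q(\bar\tau)=0$, contradicting $Q(\bar\tau)=T_1(\bar\tau)>0$. Similarly, in the endpoint paragraph, $Q(z_1)>0$ is best justified by the monotonicity of $Q$ from your Step~4 rather than by ``$T_1$ increasing'', since $g_1(u)M<0$ on $(r_1,z_1)$.
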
\vskip0.08in
	The proof of Proposition~\ref{prop phase1 one zero} is similar to that of Proposition~\ref{prop vgroundstate}, the difference being that we replace $\infty$ with $z_1$; we omit the details.  Proposition~\ref{prop phase1 one zero} shows that if  $u$ is an oscillatory nodal solution with exactly one zero, then  $v$ changes sign exactly once on $(0,z_1]$.
	
	\vskip0.08in
	If $u$ is either an oscillatory nodal solution   with  at least two zeros or a bound state, then $u$ admits a critical point $c_1>z_1$,  and we   prove that $v$ changes sign exactly once on $(0,c_1]$. 
	
	\begin{proposition}\label{prop phase1}
		Let $u=u(r,\alpha)$ be  an oscillatory nodal solution of \eqref{equlog} with  multiple zeros or a bound state. Then there exists $\tau_1\in \sbr{0,r_1}$   such that  \begin{equation}\label{i6}
			v(r)>0 ~ \text{ in } \sbr{0,\tau_1}, \quad 	v(\tau_1)=0, \quad 	v(r)<0~ \text{ in } \sbr{ \tau_1, c_1}.
		\end{equation}Moreover, we have 
		\begin{equation}\label{i5}
			Q(r)>0~ \text{ in } (0,z_1]\cup \mbr{\bar{b}_1,c_1},\quad ~     M(r),~  Q_1(r),~ Q_2(r) >0 ~~ \text{ in }  (0,c_1],
		\end{equation}
		and
		\begin{equation}\label{u11}
			T_1(r),~   T_1'(r)>0 ~~\text{ in } ~ ( 0,z_1), 
		\end{equation}
		as well as
		\begin{equation} \label{p0}
			uf(u),~ f'(u),~ uv,~ u'v'>0, \quad \text{and } \quad uu',~uv',~u'v,~vv'<0 \quad \text{ in } \sbr{0,\tau_1}.
		\end{equation}
	\end{proposition}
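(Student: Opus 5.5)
The plan is to build on Proposition \ref{prop phase1 one zero}. It already gives a unique zero $\tau_1\in(0,r_1)$ of $v$ on $(0,z_1]$, with $v<0$ on $(\tau_1,z_1]$. It also gives $Q,M>0$ on $(0,z_1]$ and $T_1,T_1'>0$ on $(0,z_1)$, which is \eqref{u11}.

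The sign relations \eqref{p0} on $(0,\tau_1)$ are immediate. There $u>r_1$-level, i.e. $u>1$, so $uf(u)=u^2\log u^2>0$ and $f'(u)=\log u^2+2>0$. Also $u'<0$ and $v>0$. Finally $v'<0$: we have $v'(0)=0$ and $(r^{n-1}v')'=-r^{n-1}f'(u)v<0$ on $(0,\tau_1)$, so $r^{n-1}v'$ decreases from $0$. The positivity of $Q_1,Q_2$ on $(0,\tau_1]$ follows from $Q_i'=r^{n-1}[iu'v'+(2-i)f(u)v]>0$ there (both terms are positive) and $Q_i(0)=0$.

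The real work is on $(z_1,c_1]$. On this interval $u<0$ and $u'<0$, so $|u|$ increases from $0$ to $|u(c_1)|>\alpha_*$. I first aim to prove $M>0$ on $(0,c_1]$, by contradiction. Since $M(z_1)>0$, suppose $\bar\tau\in(z_1,c_1]$ is the first zero of $M$, and suppose $v<0$ on $(z_1,\bar\tau)$ (I justify this below). Then $M'=2r^{n-1}uv>0$ on $(z_1,\bar\tau)$, so $M$ increases, which is impossible. The order of the argument is therefore: show that $v$ cannot vanish in $(z_1,c_1]$ before $M$ does. At a first zero $\xi$ of $v$ beyond $\tau_1$ we would have $v'(\xi)\ge0$ and $u(\xi)<0$, hence $M(\xi)=-\xi^{n-1}u v'\ge 0$. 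I need strictness and a contradiction, so I examine $Q$ instead. Here $Q(\xi)=\xi^n u'v'\le 0$, while $Q_n(\xi)=Q(\xi)$, and by the same token $Q_1(\xi),Q_2(\xi)\le 0$. I will show instead that $Q_1,Q_2>0$ on $(\tau_1,c_1]$ as long as $v<0$. On $(\tau_1,z_1)$ one has $u'v'$ of uncertain sign, so I use the identity $Q_i=Q+i r^{n-1}u'v$, where $r^{n-1}u'v>0$ whenever $v<0$. Thus $Q_i>Q$ there, and $Q>0$ on $(0,z_1]$ by Proposition \ref{prop phase1 one zero}, giving $Q_i>0$ on $(0,z_1]$.

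On $(z_1,c_1]$, $Q'=2r^{n-1}f(u)v$ has the sign of $-f(u)$. Since $u<0$ there, $f(u)>0$ when $|u|<1$ and $f(u)<0$ when $|u|>1$. So $Q$ decreases on $(z_1,\bar r_1)$ and increases on $(\bar r_1,c_1)$. For the claim $Q>0$ on $[\bar b_1,c_1]$ I use the bridge function $B_a$ of \eqref{defi of Ba}, choosing $a$ so that $F_a$ vanishes at $|u|=\alpha_*$. Since $uf-2F=u^2$, $F_a=\tfrac12u^2(\log u^2-1-a)$, so $a=0$ and $B_0=Q-2F(u)r^{n-1}v/u'$. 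We have $B_0'=-2F\,Q_n/(ru'^2)$. On $(z_1,\bar b_1)$, $F<0$, so if $Q_n>0$ then $B_0$ increases. At $z_1$, $B_0=Q(z_1)>0$, and at $\bar b_1$, $B_0=Q$. Hence $Q(\bar b_1)>0$, and then $Q$ increases up to $c_1$, giving $Q>0$ on $[\bar b_1,c_1]$.

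The main obstacle is closing this loop: proving that $Q_n>0$ (and likewise $Q_1,Q_2$) persists on $(z_1,\bar b_1)$ while $v<0$. I would first try $T_2'=\tfrac2r[\omega M+r^n uv]>0$, valid while $M>0$ and $v<0$, together with $T_2=Q-(\log u^2-1)M$. As $|u|\to0$ near $z_1$ this degenerates, so I would instead try to express $Q_i$ as combinations of $Q$ and $P$-type quantities and invoke Lemma \ref{lemma positivefunction} ($P_2>0$, $-(P/r^n)'>0$ for $n\ge3$). Specifically, I would mimic the ground-state trick: at a putative first zero of $Q_i$, use $u'v/(uv')$ relations to express $Q_i$ through $P_2$ or $P_1$ and reach a sign contradiction. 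Once $Q_1,Q_2>0$ and $M>0$ on $(0,c_1]$ are established, $v$ has no zero in $(\tau_1,c_1]$, since such a zero would force $Q_1\le0$. This yields \eqref{i6} and \eqref{i5}.
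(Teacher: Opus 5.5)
\textbf{There is a genuine gap, and it sits in the step that carries the proposition.} Several parts of your plan do agree with the paper: your treatment of $(0,z_1]$; your choice $a=0$ in $B_a$, which makes $B_0=Q$ at both $z_1$ and $\bar b_1$; the monotonicity of $B_0$ on $(z_1,\bar b_1)$ once $Q_n>0$; and the closing argument on $[\bar b_1,c_1]$. The gap is the step you yourself call ``the main obstacle'': showing $Q_1>0$ on $(z_1,\bar b_1]$, which gives $Q_n\ge Q_1>0$ since $u'v>0$ there. You leave it open, and the routes you sketch do not close it. On $(z_1,c_1)$ one has $\omega=-ru'/u<0$, so $T_2'=\frac{2}{r}[\omega M+r^nuv]$ has no definite sign. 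The ground-state trick with $P_1$ works only because $M(\bar\tau)=0$ forces $u'/v'=u/v$, and no such relation is available at a zero of $Q_1$.

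\textbf{The missing idea} is to evaluate the bridge function at the first zero $\tilde z\in(z_1,\bar b_1]$ of $Q_1$; such a zero lies strictly after $z_1$ because $Q_1(z_1)>Q(z_1)>0$. At $\tilde z$ you have $Q(\tilde z)=-\tilde z^{n-1}u'v$, so
\[
B_0(\tilde z)=-\frac{\tilde z^{n-1}v}{u'}\bigl(u'^2+2F(u)\bigr)=-\frac{2\tilde z^{n-1}v(\tilde z)}{u'(\tilde z)}\,E(\tilde z)<0 .
\]
The sign holds because $v/u'>0$ and $E>0$. For a bound state, $E>0$ is Lemma \ref{lemma propertyE}. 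For an oscillatory solution, $E$ is decreasing, and by Lemma \ref{lemma oscalliate} a point $\bar r$ with $E(\bar r)\le 0$ would leave no zero of $u$ beyond $\bar r$, whereas $u$ still vanishes at $z_2>c_1$.

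On the other hand, $v<0$ on $[z_1,\tilde z]$: a first zero $z_p$ of $v$ there would give $Q_1(z_p)=z_p^n u'v'<0$. Hence on $(z_1,\tilde z)$ you have $Q_n\ge Q_1>0$, $F(u)<0$ and $B_0'>0$. This gives $B_0(\tilde z)>B_0(z_1)=Q(z_1)>0$, a contradiction.

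The essential input is the positivity of the energy, entering through $u'^2+2F(u)=2E$; none of your suggested alternatives uses it. Once you have this step, $Q_1>0$ and $v<0$ on $[z_1,\bar b_1]$ follow, and the rest of your plan goes through as written.
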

	\begin{proof}
		By  Proposition \ref{prop phase1 one zero}, there exists $\tau_1 \in \sbr{0, r_1}$ such that   $	v(r)>0 $  in $ \sbr{0,\tau_1}$, $v(\tau_1)=0$, and $v(r)<0$ in $( \tau_1,z_1]$. Moreover, we have
		\begin{equation}\label{i7} 
			Q(r),~ M(r)>0 ~~ \text{ in } ~ ( 0,z_1]  \quad \text{ and  }\quad T_1(r),~   T_1'(r)>0~~\text{ in } ~ ( 0,z_1).
		\end{equation}
		Then \eqref{u11} holds. To prove \eqref{i6}, our first step is to demonstrate that
		\begin{equation}
			v(r)<0~ \text{ in } \mbr{ z_1, \bar{b}_1}.
		\end{equation}
		For this it is enough to show
		\begin{equation}\label{q1}
			Q_1(r)=Q(r)+r^{n-1}u'v>0\quad \text{ in } [z_1, \bar{b}_1],
		\end{equation}
		because at the first possible zero $z_p$ of $v$ in $(z_1,\bar{b}_1]$ we have $Q_1(z_p)=z_p^{n}u'(z_p)v'(z_p)<0$. Observe that since $u'(z_1)v(z_1)>0$, we have $Q_1(z_1)>Q(z_1)>0$.   If statement \eqref{q1} does not hold, then there exists some $\tilde{z}\in (z_1, \bar{b}_1]$ such that 
		\begin{equation}\label{q2}
			Q_1(r)>0 ~~ \text{ in } (z_1,  \tilde{z}), \quad  Q_1(\tilde{z})=0, \quad \text{ and }\quad  v<0  ~~\text{ in } \mbr{z_1,  \tilde{z}}.
		\end{equation}
		Given that  $u'v>0$ and $F(u)<0$ in $\sbr{z_1,\bar{b}_1}$, it follows that $Q_n(r)>Q_1(r)>0$ and $ B_0'(r)=-2F(u(r))\cdot  {Q_n(r)}/\sbr{ru'^2}>0$. Hence, we have 
		\begin{equation}
			B_0(\tilde{z})> B_0(z_1)=Q(z_1)>0.
		\end{equation}
		However, from \eqref{q2} we have 
		$Q(\tilde{z})=-\tilde{z}^{n-1}u'(\tilde{z})v(\tilde{z})$. Using \eqref{defi of Ba} together with Lemma \ref{lemma propertyE}, we obtain
		\begin{equation} \label{oi3}
			\begin{aligned}
				B_0(\tilde{z})&=-\tilde{z}^{n-1}u'(\tilde{z})v(\tilde{z})-2F(u(\tilde{z})) \cdot \frac{\tilde{z}^{n-1}v(\tilde{z})}{u'(\tilde{z})}=-\frac{2\tilde{z}^{n-1}v(\tilde{z})}{u'(\tilde{z})}\cdot E(\tilde{z})<0,
			\end{aligned}
		\end{equation}
		which is a contradiction,  and therefore \eqref{q1} is proved. Hence,  on $ \mbr{ z_1, \bar{b}_1}$, we have $	v(r)<0 $.     Note that $Q_n(r)>Q_1(r)>0$ on $ \mbr{ z_1, \bar{b}_1}$, while 
		$ B_0'(r)>0$ on $(z_1,\bar{b}_1)$. Consequently,
		\begin{equation}\label{q3}
			Q(\bar{b}_1)=B_0(\bar{b}_1)>B_0(z_1)= Q(z_1)>0.
		\end{equation}
		With \eqref{q3} at our disposal, we can now prove 
		\begin{equation}\label{q4}
			v(r)<0~ \text{ in } \mbr{   \bar{b}_1,c_1}.
		\end{equation}
		Suppose, to the contrary, that $v$ has a first possible zero at some $\tilde{l}$ in $  ( \bar{b}_1,c_1]$ . Then $v(r)<0$ in  $ ( \bar{b}_1,\tilde{l})$, $v(\tilde{l})=0$, and $v'(\tilde{l})>0$. Because    $u(r)<-\alpha_*<-1$ for all $r\in \mbr{   \bar{b}_1,c_1}$, we obtain 
		$Q'(r)=2r^{n-1} (u\log u^2) v>0 $ for $r\in ( \bar{b}_1,\tilde{l})$, so that 
		$Q(\tilde{l})>Q(\bar{b}_1)>0$. On the other hand,  by the definition of $Q$, we find    $Q(\tilde{l})=\tilde{l}^n u'(\tilde{l})v'(\tilde{l})<0$, which is a contradiction. Thus \eqref{q4} holds, which confirms \eqref{i6}.
		\medbreak
		Finally, we prove \eqref{i5}   and \eqref{p0}. As noted in  \eqref{i7},  both  $Q $ and $ M$ are positive on  $ (0,z_1]$. Since $v(r)<0$ and $u(r)<-\alpha_*<-1$  on $\mbr{   \bar{b}_1,c_1}$,  it follows   that $Q'(r)=r^{n-1} (u\log u^2) v>0$ on this interval, and then from   \eqref{q3} we obtain  $Q(r)>Q(\bar{b}_1)>0$ on $\mbr{   \bar{b}_1,c_1}$. Therefore, 
		\begin{equation}\label{q8}
			Q(r)>0~ \text{ in } (0,z_1]\cup \mbr{\bar{b}_1,c_1}.
		\end{equation} On $\mbr{z_1,c_1}$,  we know that $  M'(r)>0$, and therefore,    $M(r)>M(z_1)>0$. It remains to show that $ Q_1,Q_2 >0$ on $(0,c_1]$. By \eqref{defi of deriQMT}, we know that 
		\begin{equation}
			Q_1'(r)=r^{n-1}[u'v'+f(u)v], ~~ \text{and }~~Q_2'(r)=2r^{n-1}u'v'.
		\end{equation}  On  $(0,\tau_1)$, we have 
		\begin{equation}\label{p2}
			u>1,~ \text{ and } ~f(u),~ f'(u),~v>0.
		\end{equation}  
		Then by \eqref{equv},  we obtain 
		\begin{equation}\label{equv2}
			(r^{n-1}v')'=-r^{n-1}f'(u)v<0,
		\end{equation}
		which implies that $v'<0$, and hence, $Q_1',Q_2'>0$. Consequently, $Q_i(r)>Q_i(0)=0$ for $i=1,2$ on $(0,\tau_1)$. For $r \in [\tau_1,c_1)$, we have      $u'v>0$.    Then from \eqref{q1} we deduce that  $Q_2(r)> Q_1(r)>0 $ for $r \in \mbr{z_1, \bar{b}_1}$ and from \eqref{q8}  that $Q_2(r)>Q_1(r)>Q(r)>0$ for $r\in [\tau_1,z_1]\cup [ \bar{b}_1,c_1) $. Moreover, it is clear that $Q_2(c_1)=Q_1(c_1)=Q(c_1)>0$.  This completes the proof of \eqref{i5}. Finally,  \eqref{p0} follows directly from \eqref{p2} and  the fact that $u'<0$, $v'<0$ on  $(0,\tau_1)$. 
	\end{proof}
	\vskip0.1in
	
	In what follows, we investigate the behavior of $v$ on the remaining intervals.  The key tool is the function $T_2(r)$ defined in \eqref{defi of T2}.

	\begin{proposition}\label{prop phasei}
		Let $u=u(r,\alpha)$ be  a nodal solution of \eqref{equlog}   with $k\geq 2$ zeros.  Then for any $i\in \lbr{2,\ldots,k}$, and also for $i=k+1$ if $u$ is a bound state, we have  $$ Q(c_{i-1}),~ M(c_{i-1}),~ T_2(c_{i-1})>0.$$ Moreover, the variation $v$ has a unique zero $\tau_i$  on the  interval $I_i$ defined by
		\begin{equation}
			I_{i}=    \begin{cases}
				\mbr{c_{i-1},c_i},  \quad &\text{ if }  ~~i\in \lbr{2,\ldots,k-1} ,\\
				[c_{k-1},c_k],\quad &\text{ if }~~ i=k~ \text{ and } ~u ~\text{ is   a bound state},\\
				\mbr{c_{k-1},z_k},  \quad &\text{ if }~~ i=k ~ \text{ and } ~ u ~\text{ is not a bound state}, \\ [c_{k},\infty),\quad &\text{ if }~~ i=k+1~ \text{ and } ~u ~\text{ is   a bound state},
			\end{cases}
		\end{equation}
		with  $\tau_i\in \sbr{c_{i-1},r_i}$ in all cases.
	\end{proposition}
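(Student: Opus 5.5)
We argue by induction on $i$, following the scheme of Proposition~\ref{prop phase1} and of Tang's phase-by-phase analysis. The base case is Proposition~\ref{prop phase1}. It gives $v<0$ on $(\tau_1,c_1]$, together with $Q(c_1)>0$ and $M(c_1)>0$.

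We first compute $T_2(c_1)$. At a critical point $c$, the identity $T_2=T_1+M$ together with $g_1(u)=\log u^2$ gives
\[
T_2(c)=Q(c)-(\log u^2(c)-1)M(c).
\]
At $c=c_{i-1}$ we also have
\[
Q(c)=c^n f(u)v,\qquad M(c)=-c^{n-1}uv',\qquad\text{so}\qquad T_2(c)=c^{n-1}\bigl[c\,u\log u^2\, v+(\log u^2-1)uv'\bigr].
\]
Since $|u(c_{i-1})|>\alpha_*$, both $\log u^2-1>0$ and $\log u^2>1$ hold there. Hence the sign of $T_2(c_{i-1})$ is determined once we know that $uv$ and $uv'$ have appropriate signs at $c_{i-1}$. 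The positivity of $T_2$ at $c_1$ will not come from this pointwise formula. Instead we will propagate $T_2>0$ from the previous phase through $T_2'=\frac{2}{r}[\omega M+r^nuv]$. On the part of $(\tau_1,c_1)$ where $uv>0$, namely after $z_1$, the derivative $T_2'$ is positive provided $\omega M>0$; we use Lemma~\ref{lemma positivefunction}, which gives $\omega'>0$, to control the sign of $\omega$. On $(0,z_1)$ we use $T_1>0$ together with $M>0$. Combining these yields $T_2(c_1)>0$, and as the induction statement we carry $Q(c_{i-1}),M(c_{i-1}),T_2(c_{i-1})>0$ from each phase to the next.

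Within phase $I_i$, say with $u>0$ near $c_{i-1}$ after the reflection $u\mapsto -u$, we proceed in four steps.

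\textbf{Step 1.} We show that $v$ must vanish in $(c_{i-1},r_i)$. Suppose not. On $(c_{i-1},r_i)$ we have $u>1$, hence $f(u)>0$, and $v$ keeps the sign it has at $c_{i-1}$, which is fixed by $Q(c_{i-1})>0$. The Wronskian argument of Proposition~\ref{prop vgroundstate}, using $W'=2r^{n-1}(u'^2/u)v$ and the sign of $W$ at the point where $f(u)=0$, then gives a contradiction. Note that $W(c_{i-1})=-c^{n-1}f(u)v'$, and its sign follows from $M(c_{i-1})>0$. This produces a zero $\tau_i\in(c_{i-1},r_i)$.

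\textbf{Step 2.} We show that $\tau_i$ is the only zero on $[c_{i-1},z_i]$ (or on $[c_{k-1},z_k]$ when $u$ is not a bound state). Suppose there is a second zero $\xi$. At $\xi$ we have $T_2(\xi)=-(\log u^2-1)M(\xi)+Q(\xi)$ with $M(\xi)=-\xi^{n-1}uv'$ and $Q(\xi)=\xi^n u'v'$, so the sign of $T_2(\xi)$ is explicit. We show $T_2>0$ on the relevant range, using $T_2'=\frac2r[\omega M+r^nuv]$ and arguing on the first zero of $M$ exactly as in the proof of \eqref{i1}: at a zero of $M$ one gets $P_1=Q\,u/v$, and Lemma~\ref{lemma positivefunction} gives $P_1>0$, a contradiction. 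For $n\ge3$ we have $P_2>0$, which lets us treat the interval near $r_i$ where $\log u^2<1$ by comparing with $P_2$.

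\textbf{Step 3.} On $[z_i,c_i]$ we repeat the $Q_1$ and bridge-function argument of Proposition~\ref{prop phase1} verbatim. The function $B_0$ is increasing on $(z_i,\bar b_i)$, and the same identity expressing $B_0$ at a zero of $Q_1$ in terms of $E$ gives a contradiction, so $Q_1>0$ there. Next, $Q'>0$ on $[\bar b_i,c_i]$ excludes any zero of $v$ there. This yields $Q(c_i)>0$ and $M(c_i)>0$, while $T_2(c_i)>0$ follows from the monotonicity used in Step 2. This closes the induction.

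\textbf{Step 4.} For the bound-state interval $[c_k,\infty)$, Steps 1 and 2 run on $(c_k,\infty)$. Here we use the decay estimates of Lemma~\ref{lemma decay} to show that $Q$, $M$ and $T_2$ tend to limits compatible with the sign of $v$, as in the proof of \eqref{lim_of_Q}.

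The main obstacle is Step 2, specifically keeping $T_2>0$, or equivalently $M>0$, throughout $(\tau_i,z_i)$. Near $r_i$ the coefficient $g_2=\log u^2-1$ changes sign, and $\omega$ is unbounded near $z_i$. I would first try to prove $M>0$ on $[c_{i-1},z_i]$ by contradiction at its first zero via $P_1$, and then recover $T_2>0$ from $T_2'$. If that fails, I would fall back on the bridge function $B_a$ with $a=1$, because $F_1=F-\frac12(uf-2F)$ has an explicit sign for $f=u\log u^2$.
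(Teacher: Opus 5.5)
There is a genuine gap: the step that closes the induction, $T_2(c_i)>0$, is not proved, and the mechanism you propose for it fails. Your overall scheme does match the paper: induction over phases, the Wronskian sign change in $(c_{i-1},r_i)$, the $Q_1$/$B_0$ argument with $E>0$, and $Q'>0$ on $[\bar b_i,c_i]$.

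On $(z_i,c_i)$ one has $uu'>0$, so $\omega=-ru'/u<0$ there. It increases from $-\infty$ at $z_i^+$ to $0$ at $c_i$, which is all that $\omega'>0$ gives you. Hence $\omega M<0$ while $r^nuv>0$, and $T_2'=\frac{2}{r}[\omega M+r^nuv]$ has no sign. In fact, as $r\to z_i^+$ you get $T_2=Q-(\log u^2-1)M\to+\infty$ and $T_2'\to-\infty$, whereas $T_2'(c_i)=2c_i^{n-1}u(c_i)v(c_i)>0$. So $T_2$ always has an interior minimum in $(z_i,c_i)$. The same singularity at $z_1$ prevents carrying $T_2>0$ over from $(0,z_1)$ in the base case.

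Showing that this minimum is positive is the core of the paper (Lemma~\ref{lemma T2}):
\begin{itemize}
\item From $T_2=B_0+\frac{F(u)}{uu'}T_2'$ one gets $T_2''=\frac{2u}{ru'}Q_n>0$ at any critical point $\bar t_i\in(\bar b_i,c_i)$, using $Q_n>Q>0$ there. So this critical point is a unique minimum.
\item Take the mirror point $t_i\in(c_{i-1},b_i)$ with $u(t_i)=-u(\bar t_i)$, and the bridge function $B_{a_i}$ with $a_i=\log u^2(t_i)-1$. Its weight $F_{a_i}$ vanishes at both points, so $T_2(\bar t_i)-T_2(t_i)=\int_{t_i}^{\bar t_i}u^2\log\frac{u^2(t_i)}{u^2}\,\frac{Q_n}{r\,u'^2}\,\mathrm{d}r$.
\item This integral is shown positive by pairing points of equal $|u|$ on the two sides of $z_i$ via \eqref{u777}. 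That pairing uses Tang's comparison for $Q/(r^{n-1}|u'|)$, the monotonicity of $P/r^n$, and the identity $Q-Pv/u=\omega M+r^nuv$.
\item Lemma~\ref{lemma Qnpositive4} ($n\ge4$) and Lemma~\ref{lemma dimension3} ($n=3$) control the region where $Q_n<0$.
\end{itemize}
This is exactly where $n\ge3$ and $\log u^2(b_i)=1$ enter. Your plan never uses $n\ge3$ in an essential way, which signals the missing ingredient. Without $T_2(c_{i-1})>0$ the next phase cannot be run. It is what excludes a second zero of $v$ in $(\tau_i,b_i]$ and gives $Q(b_i)=T_2(b_i)>0$ when $\tau_i<b_i$. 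It also supplies $T_2(t_i)>0$ in the comparison above.

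Your Step~2 also does not transfer as stated. The first-zero-of-$M$ argument behind \eqref{i1} needs $Q>0$ at that zero. In the first phase this came from $T_1(0)=0$ and $T_1'>0$. For $i\ge2$, $T_1(c_{i-1})=T_2(c_{i-1})-M(c_{i-1})$ has no sign, and $T_2$ is not monotone after $\tau_i$, where $M'=2r^{n-1}uv<0$.

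The paper proceeds differently:
\begin{itemize}
\item It first uses $T_2'\ge0$ on $[c_{i-1},\tau_i]$.
\item If $\tau_i<b_i$, it excludes a further zero of $v$ in $(\tau_i,b_i]$. This uses the formula $T_2=r^{n-1}uv'(g_2(u)-\omega)$ at zeros of $v$, and, at the first critical point of $T_2$, the identity $Q-Pv/u=\frac{r}{2}T_2'$ together with $P>0$ and $F(u)>0$ before $b_i$.
\item Only afterwards is $Q_1>0$ propagated from $\max\{b_i,\tau_i\}$ to $\bar b_i$ (resp.\ $z_k$, $\infty$) with $B_0$ and $E>0$.
\end{itemize}
Finally, no decay estimates are needed for the zero count on $[c_k,\infty)$. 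The same $Q_1$/$B_0$ argument works at any finite first zero; Lemma~\ref{lemma decay} is used only later, for $|v|\to\infty$.
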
 
	
	The rest of this subsection is devoted to the proof of Proposition \ref{prop phasei} by mathematical induction.  To carry out the induction, we first prove a lemma that describes the sign properties of 
	$v$ on a generic interval.
	
	\begin{lemma}\label{lemma sign}
		Assume that 
		\begin{equation}\label{assumption1}
			Q(c_{i-1}), ~ M(c_{i-1}) >0  ~\text{ for }~i\in \lbr{2,\ldots,k}, \text{  and also for }  ~ i=k+1 \text{ if }~u ~\text{ is   a bound state}.
		\end{equation}
		Then $v(r)$ changes sign  in $(c_{i-1},r_i)$ for  each such $i$.  Let $\tau_i$ be the first zero of $v$ in $(c_{i-1},r_i)$.    Then 
		\begin{equation} \label{p1}
			uf(u),~ f'(u),~ uv,~ u'v'>0, \quad \text{and } \quad uu',~uv',~u'v,~vv'<0\quad \text{ in } \sbr{c_{i-1},\tau_i}.
		\end{equation} 
		Moreover,  if we  further assume that 
		\begin{equation}\label{assumption2}
			Q_1(r)>0, \quad  \text{ for } ~~r \in  
			I_{i} ,
		\end{equation}
		then $\tau_i$ is the unique zero of $v$ on the interval $I_i$.
	\end{lemma}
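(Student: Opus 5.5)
Throughout, fix $i$ as in the statement. By symmetry (replacing $(u,v)$ by $(-u,-v)$, which leaves $Q$, $M$, $Q_1$ and the hypotheses unchanged), we may assume $u>0$ on $(z_{i-1},z_i)$. Then $u(c_{i-1})>\alpha_*$, $u'(c_{i-1})=0$, $u'<0$ on $(c_{i-1},z_i]$, and $u>1$ on $[c_{i-1},r_i)$. At $c_{i-1}$ we have $Q(c_{i-1})=c_{i-1}^n f(u)v$ and $M(c_{i-1})=-c_{i-1}^{n-1}uv'$. Since $f(u(c_{i-1}))>0$, the hypothesis \eqref{assumption1} gives $v(c_{i-1})>0$ and $v'(c_{i-1})<0$.

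\textbf{Step 1: $v$ changes sign in $(c_{i-1},r_i)$.} I argue by contradiction with the Wronskian $W$ from \eqref{defi of W}, now started at $c_{i-1}$. There
\[
W(c_{i-1})=-c_{i-1}^{n-1}f(u)v'(c_{i-1})>0 .
\]
If $v>0$ on $(c_{i-1},r_i)$, then \eqref{deri w} gives $W'=2r^{n-1}(u'^2/u)v>0$, so $W(r_i)>0$. On the other hand, at $r_i$ we have $f(u)=0$, $f'(1)=2>0$, $u'<0$ and $v\ge 0$, so $W(r_i)=r_i^{n-1}f'(1)u'v\le 0$. This is a contradiction, so $v$ has a first zero $\tau_i\in(c_{i-1},r_i)$.

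\textbf{Step 2: the sign relations \eqref{p1}.} On $(c_{i-1},\tau_i)$ we have $u>1$, hence $uf(u)>0$, $f'(u)=\log u^2+2>0$, $v>0$ and $u'<0$. From \eqref{equv},
\[
(r^{n-1}v')'=-r^{n-1}f'(u)v<0 .
\]
Together with $c_{i-1}^{n-1}v'(c_{i-1})<0$, this gives $v'<0$ on the whole interval. All the listed signs follow from these facts, and $v'(\tau_i)<0$ as well.

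\textbf{Step 3: uniqueness under \eqref{assumption2}.} Suppose $v$ has a second zero in $I_i$, and let $\xi$ be the first zero after $\tau_i$. Then $v<0$ on $(\tau_i,\xi)$, so $v'(\xi)\ge 0$, and in fact $v'(\xi)>0$ since $v\not\equiv 0$. At $\xi$,
\[
Q_1(\xi)=\xi^n u'(\xi)v'(\xi).
\]
If $\xi<z_i$, or $\xi$ lies in the later monotone portion where $u'<0$, this is $<0$, contradicting \eqref{assumption2}.

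The main obstacle is the part of $I_i$ beyond $z_i$ where $u$ is negative. That stretch runs up to $c_i$ (where $u'$ is still negative up to $c_i$) when $I_i=[c_{i-1},c_i]$, and is all of $[c_k,\infty)$ with $u'$ of one sign after $c_k$ in the bound-state case. In both situations $u'<0$ on $(c_{i-1},c_i)$, resp. $u'$ keeps the sign of $-u$ after $c_k$. So the only delicate point is checking that $u'v'$ has the forbidden sign at every possible $\xi$, including the endpoint $c_i$, where $u'=0$ and hence $Q_1(c_i)=c_i^nf(u)v$ with $v(c_i)=0$ giving $Q_1(c_i)=0$.

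I plan to settle these cases as follows.
\begin{itemize}
\item If $\xi=c_i$, then $Q_1(c_i)=0$, which contradicts the strict positivity in \eqref{assumption2}.
\item For the bound-state interval $[c_k,\infty)$, I would redo the normalization with $u>0$ on $(z_k,\infty)$, so that $u'<0$ there. The same computation then yields $Q_1(\xi)<0$.
\end{itemize}
Hence $\tau_i$ is the unique zero of $v$ on $I_i$.
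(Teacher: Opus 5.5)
Your proof is correct and follows essentially the same route as the paper. The sign change comes from the Wronskian $W$ run from $c_{i-1}$ to $r_i$, the sign relations from the monotonicity of $r^{n-1}v'$, and uniqueness from $Q_1=r^n u'v'<0$ (or $Q_1(c_i)=0$) at a putative second zero of $v$. Your one difference is presentational: you first read off $v(c_{i-1})>0>v'(c_{i-1})$ from the hypotheses and reduce via the odd symmetry $(u,v)\mapsto(-u,-v)$, which leaves $W$, $Q$, $M$, $Q_1$ invariant; this replaces the paper's case split on the parity of $i$ and the sign of $v$.
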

	
	\begin{proof}
		We first prove that $v(r)$ changes sign in $(c_{i-1}, r_i)$ for $i\in \lbr{2,\ldots,k}$, and also for $i=k+1$ if $u$ is a bound state. Without loss of generality, we assume by contradiction that $v>0$ in $(c_{i-1}, r_i)$; the case $v<0$ can be treated similarly. Then by \eqref{deri w}, we have $W'(r)>0$ in $(c_{i-1}, r_i)$. Recall that $u'(c_{i-1})=0$ and  $|u(r_i)|=1$, and we have 
		\begin{equation}
			W(c_{i-1})=-c_{i-1}^{n-1}f(u(c_{i-1}))v'(c_{i-1}),  \quad \text{and}\quad  W(r_i)=r_i^{n-1}f'(1)u'(r_i)v(r_i).
		\end{equation}
		Moreover, it follows from  $Q(c_{i-1})$, $  M(c_{i-1}) >0$ that 
		\begin{equation} 
			f(u)v>0, \quad \text{ and } \quad uv'<0, \quad \text{ at } ~r=c_{i-1}.
		\end{equation}
		Together with $|u(c_{i-1})|>\alpha_*>1$, this implies that 
		\begin{equation}\label{q9}
			v(c_{i-1})v'(c_{i-1})<0, \quad \text{ and }\quad u(c_{i-1})v(c_{i-1})>0.
		\end{equation}
		Because $v > 0$ in $(c_{i-1}, r_i)$, we have $v(c_{i-1}) > 0$ and consequently $v'(c_{i-1}) < 0$. If $i$ is even, then $f(u(c_{i-1})) < 0$ and consequently $W(c_{i-1}) < 0$. Meanwhile, from 
		$u'(r_i) > 0$ and $v(r_i) > 0$, we get  $W(r_i) > 0$. However, by \eqref{deri w}, we have $W'(r) < 0$ for all $r \in [c_{i-1}, r_i]$, which is impossible. If $i$ is odd, a similar argument shows $W(c_{i-1}) > 0$ and $W(r_i)<0$, while $W'(r)> 0$ on $  [c_{i-1}, r_i]$, which is also impossible. Hence, $v(r)$ changes sign in $(c_{i-1}, r_i)$ for $i\in \lbr{2,\ldots,k}$, and also for $i=k+1$ when $u$ is a bound state. 
		
		Next, we prove \eqref{p1}.  On the interval $(c_{i-1},\tau_i)$, we have $|u|>1$, $uf(u)>0$, $f'(u)>0$. If $i$ is odd, then   $u>0$ and $u'<0$. By \eqref{q9}, we obtain $v'(c_{i-1}) < 0$ and $v(c_{i-1}) > 0$. Since $v$ does not change sign on $(c_{i-1}, \tau_i)$, it follows that $v > 0$ on this interval. Then from \eqref{equv2} we have $(r^{n-1}v')' = -r^{n-1}f'(u)v < 0$, which implies $r^{n-1}v' < c_{i-1}^{n-1}v'(c_{i-1}) < 0$ on $(c_{i-1}, \tau_i)$; hence $v' < 0$ on this interval.   Consequently, \eqref{p1} follows directly from    $u,v>0$ and  $u', v'<0$.  Similarly,  if $i$ is even, one finds $u, v < 0$ and $u', v' > 0$, so that \eqref{p1} remains valid.
		
		Finally, we prove that if \eqref{assumption2} holds, then 
		$\tau_i$ is the unique zero of $v$ on the interval $I_i$. Note that  $u'(\tau_i)v'(\tau_i)>0$. First, we consider the case where $i\in\lbr{2,\ldots,k-1}$, or $i=k$ when $u$ is a bound state. Suppose that $v$ has another zero in $(\tau_i,c_i]$, and  let $\tilde\tau_i$ be the first such zero.  If $\tilde\tau_i\in(\tau_i,c_i)$, then $u'(\tilde\tau_i)v'(\tilde\tau_i)<0$ and consequently $Q_1(\tilde\tau_i)=\tilde\tau_i^{n-1}u'(\tilde\tau_i)v'(\tilde\tau_i)<0$. If $\tilde\tau_i=c_i$, then $u'(\tilde\tau_i)v'(\tilde\tau_i)=0$ and $Q_1(\tilde\tau_i)=0$. Both possibilities contradict \eqref{assumption2}. Next, we consider the case $i=k$ when $u$ is not a bound state. Suppose that $v$ has another zero in $(\tau_i,z_k]$, and let $\tilde\tau_i$ be the first such zero. Then $u'(\tilde\tau_i)v'(\tilde\tau_i)<0$ and hence $Q_1(\tilde\tau_i)=\tilde\tau_i^{,n-1}u'(\tilde\tau_i)v'(\tilde\tau_i)<0$, which again contradicts \eqref{assumption2}. Finally, for the case $i=k+1$ with $u$ a bound state, we can similarly prove that $\tau_i$ is the unique zero of $v$ on $[c_k,\infty)$. The proof is completed.\end{proof}

	\begin{lemma}\label{lemma Qnpositive4}
		Assume that \eqref{assumption1} and \eqref{assumption2} hold. Then for $n\geq 4$ we have 
		\begin{equation}
			Q_n(r)>0, \quad \text{ on } ~[b_i,c_i],
		\end{equation}
		for $i\in\lbr{1,\ldots,k-1}$, and also for $i=k$ if $u$ is a bound state.
	\end{lemma}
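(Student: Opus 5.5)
The plan is to split $[b_i,c_i]$ at the zero $\tau_i$ of $v$ given by Lemma \ref{lemma sign}, and to use the decomposition
\begin{equation}
Q_n(r)=Q_1(r)+(n-1)r^{n-1}u'v=Q_2(r)+(n-2)r^{n-1}u'v .
\end{equation}
By Lemma \ref{lemma sign}, under \eqref{assumption1}--\eqref{assumption2} the zero $\tau_i$ lies in $(c_{i-1},r_i)$ and is the only zero of $v$ on $I_i$. Hence $u'v<0$ on $(c_{i-1},\tau_i)$ by \eqref{p1}, and $u'v\ge 0$ on $[\tau_i,c_i]$, since $v$ changes sign at $\tau_i$ while $u'$ keeps its sign up to $c_i$. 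On the part $[\max\{b_i,\tau_i\},c_i]$ I would simply write $Q_n\ge Q_1>0$, invoking \eqref{assumption2}; at $c_i$ itself, $Q_n(c_i)=Q_1(c_i)=Q(c_i)$. So that part of the interval is immediate, and the only real work is on $[b_i,\tau_i]$, in the case $b_i<\tau_i$.

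On $[b_i,\tau_i]$ we have $1<|u|\le\alpha_*$, so $F(u)\le 0$, while $uf(u)>0$, $f'(u)>0$, $u'v'>0$, $fv>0$ and $u'v<0$ by \eqref{p1}. At $\tau_i$ we have $Q_n(\tau_i)=\tau_i^n u'v'>0$. I would argue by contradiction, letting $\rho\in[b_i,\tau_i)$ be the last zero of $Q_n$ before $\tau_i$, so that $Q_n>0$ on $(\rho,\tau_i]$. The tool is the bridge function \eqref{defi of Ba} with $a=0$: since $F(u)<0$ on $(b_i,\tau_i)$, the formula $B_0'=-2F\,Q_n/(ru'^2)$ shows that $B_0$ is increasing on $(\rho,\tau_i)$. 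At $\rho$, the relation $Q_n(\rho)=0$ gives $Q(\rho)=-n\rho^{n-1}u'v$, and hence
\begin{equation}
B_0(\rho)=-\frac{\rho^{n-1}v}{u'}\big[n u'^2+2F(u)\big].
\end{equation}
At $\tau_i$, instead, $B_0(\tau_i)=Q(\tau_i)=\tau_i^n u'v'>0$. The aim is to obtain the contradiction by comparing these values with a quantity whose sign is known. If the direct comparison fails, I would switch to the choice $a=-1$ or $a=g_2$-type constants, which makes $F_a$ a positive multiple of $uf(u)>0$ on this range, and run the same monotonicity argument in the reverse direction.

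The main obstacle is the sign bookkeeping on $[b_i,\tau_i]$, and this is where I expect the hypothesis $n\ge4$ to enter. The plan is to express $n u'^2+2F(u)$, or more precisely the combination appearing in $B_a(\rho)$, through the Pohozaev-type quantities. I would write $r^n[u'^2+2F]=P-(n-2)r^{n-1}uu'$ and use $P>0$, $P_2>0$ and $-(P/r^n)'>0$ from Lemma \ref{lemma positivefunction}, valid for $n\ge3$, together with $\omega'>0$. The goal is to show that $n u'^2+2F>0$ at $\rho$ as soon as $n\ge4$, using $|F|\le \tfrac12 u^2$ for $1<|u|<\alpha_*$ and the lower bound on $|u'|$ coming from $E>0$. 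That would give $B_0(\rho)>0$ with the wrong relation to $Q_n(\rho)=0$.

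Concretely, I would first try to prove the pointwise inequality $n u'^2\ge -2F(u)$ on $[b_i,r_i]$ from $E(r)>0$, which gives $u'^2>-2F$. With $n\ge1$ this already yields $n u'^2+2F>0$, so $B_0(\rho)$ has sign $-\mathrm{sgn}(v/u')>0$. The contradiction should then come from integrating $B_0'$ backward from $\rho$ to $b_i$, where $F=0$ and $Q_n(b_i)$ can be related to $Q_1(b_i)>0$ via $Q_2$. Here $Q_2'=2r^{n-1}u'v'>0$ on $(c_{i-1},\tau_i)$ and $Q_2>0$ are inherited inductively as in Proposition \ref{prop phase1}, and the factor $n-2\ge2$ is what balances the term $(n-2)r^{n-1}u'v$ against $Q_2$.
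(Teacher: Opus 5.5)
Your argument on $[\max\{b_i,\tau_i\},c_i]$ is correct and matches the paper: there $u'v\ge 0$, so $Q_n\ge Q_1>0$. On $[b_i,\tau_i]$ with $b_i<\tau_i$, however, the proposal never reaches a contradiction.

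Your formula $B_0(\rho)=-\rho^{n-1}(v/u')\,[n u'^2+2F(u)]$ is right, and $E>0$ does give $B_0(\rho)>0$, since $u'v<0$ there. But the three facts $B_0(\rho)>0$, $B_0'>0$ on $(\rho,\tau_i)$, and $B_0(\tau_i)=\tau_i^n u'v'>0$ are mutually consistent, so nothing follows from them.

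The fallback with $a=-1$ fails in the same way. Here $F_{-1}=\tfrac12 uf(u)>0$, so $B_{-1}$ decreases on $(\rho,\tau_i)$, while both $B_{-1}(\rho)$ and $B_{-1}(\tau_i)=Q(\tau_i)+M(\tau_i)$ are positive. Integrating $B_0'$ back to $b_i$ gives no control either, because the sign of $Q_n$ on $(b_i,\rho)$ is exactly what is unknown.

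Two signs that the route cannot work:
\begin{itemize}
\item Your key inequality $nu'^2+2F>0$ holds for every $n\ge1$, so the hypothesis $n\ge 4$ never actually enters.
\item In $Q_n=Q_2+(n-2)r^{n-1}u'v$, the extra term is negative on $(c_{i-1},\tau_i)$ and grows with $n$. It works against you rather than \emph{balancing} $Q_2$.
\end{itemize}

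The missing idea is a two-sided estimate for $\omega=-ru'/u$ at $\rho$.

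\textbf{Upper bound.} Use the Wronskian $W=r^{n-1}[f'(u)u'v-f(u)v']$. Since $W'=2r^{n-1}(u'^2/u)v>0$ on $(c_{i-1},\tau_i)$ and $W(c_{i-1})\ge 0$, you get $W(\rho)>0$, i.e.\ $-f(u)v'/f'(u)>-u'v$. The condition $Q_n(\rho)=0$ reads $\rho[u'v'+f(u)v]=-2(n-1)u'v$, and $f(u)v>0$ there, so $-f(u)v'/f'(u)>\rho u'v'/(2(n-1))$. Multiplying by $2(n-1)$, dividing by $uv'<0$ and changing sign yields
$\omega(\rho)<2(n-1)f(u)/(uf'(u))=2(n-1)L/(L+2)$, where $L=\log u^2(\rho)\in(0,1]$.

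\textbf{Lower bound.} Since $F(u(b_i))=0$, you have $P(b_i)=b_i^{n-1}|uu'|\,[\omega(b_i)-(n-2)]>0$. Hence $\omega(\rho)\ge\omega(b_i)>n-2$ by $\omega'>0$.

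Combining the two bounds gives $L>2(n-2)/n$. This threshold is $\ge 1$ exactly when $n\ge4$, which contradicts $|u(\rho)|\le\alpha_*=e^{1/2}$. This is the paper's argument, and it is where the restriction $n\ge 4$ is genuinely needed.
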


	\begin{proof}
		For any $i\in\lbr{1,\ldots,k-1}$, and also for $i=k$ if $u$ is a bound state,  by Proposition \ref{prop phase1} and Lemma \ref{lemma sign},  the variation $v(r)$ has a unique zero $\tau_i$  on $\mbr{c_{i-1},c_i}$  with $\tau_i\in\sbr{c_{i-1},r_i}$. It is straightforward to show that $u'v\geq 0$ on $\mbr{\tau_i,c_i}$. Hence,   from \eqref{assumption2}  we  obtain
		\begin{equation}
			Q_n(r)\geq Q_1(r)>0, \quad \text{ on }~~\mbr{\tau_i,c_i}.
		\end{equation}
		Clearly, if $\tau_i\leq b_i$, then $Q_n(r)>0$ on $\mbr{b_i,c_i} $.   It remains to consider the case $b_i<\tau_i$.    In this situation, one can directly verify that $Q_n(c_i)=c_i^nf(u(c_i))v(c_i)>0$. Define
		\begin{equation}
			\theta_i=\inf\lbr{\theta: \theta\in \mbr{c_{i-1},c_i}, \quad Q_n(r)>0 \quad \text{ for all } ~r\in (\theta, c_i]}.
		\end{equation}
		Since $Q_n(\tau_i)>0$, we have $\theta_i<\tau_i$.  Obviously, 
		\begin{equation}
			Q_n(r)>0 ~~\text{ on } ~r\in [b_i, c_i] \quad \Leftrightarrow\quad  \theta_i<b_i.
		\end{equation}
		Suppose, for contradiction, that  $\theta_i\geq b_i$.  
		Hence, we have 
		\begin{equation}\label{o3}
			c_{i-1}  < b_i\leq \theta_i<\tau_i<r_i.
		\end{equation}
		By \eqref{p1} and \eqref{deri w}, we have $W'(r)>0$ in $\sbr{  c_{i-1},\tau_i}$. As $W(0)=0$ and  $W( c_{i-1})=- c_{i-1}^{n-1}f(u)v'>0$ for $i>1$, we have 
		\begin{equation}\label{u2}
			W(\theta_i)=\theta_i^{n-1} \mbr{f'(u)u'v-f(u)v'}  >W(c_{i-1} )\geq 0.
		\end{equation}
		Consequently, by \eqref{p1}, we have 
		\begin{equation}\label{o1}
			-\frac{f(u)v'}{f'(u)} >-u'v  \quad \text{at } ~~r=\theta_i.
		\end{equation}
		Moreover,  noting that  $Q_n(\theta_i)=0$, we obtain 
		\begin{equation}\label{o2}
			\theta_i \mbr{u'v'+f(u)v} =-2(n-1) u'v  \quad \text{at } ~~r=\theta_i.
		\end{equation}
		By \eqref{o3}, we obtain $f(u)v>0$ at  $r=\theta_i$. Combining \eqref{o2} with \eqref{o1} yields 
		\begin{equation}
			-\frac{f(u)v'}{f'(u)} >\frac{ \theta_i \mbr{u'v'+f(u)v}}{2(n-1)} >\frac{ \theta_i  u'v'  }{2(n-1)}\quad \text{at } ~~r=\theta_i.
		\end{equation}
		Therefore, using \eqref{p1} again, we obtain  \begin{equation}\label{o4}
			\frac{2(n-1)f(u) }{uf'(u)} =  \frac{2(n-1)f(u) v'}{f'(u)}\cdot \frac{1}{uv'}  > -\frac{ \theta_i  u'  }{u}  = \omega(\theta_i)\quad \text{at } ~~r=\theta_i,
		\end{equation}
		where $\omega$ is  defined in Lemma \ref{lemma positivefunction}. Since $uu'<0$ at $r=b_i$, it follows from   Lemma \ref{lemma positivefunction}  that  $\omega'(r)>0$ whenever $u(r)\neq 0$ and 
		\begin{equation}\label{u4}
			0<P(b_i)=b_i^nu'^2+(n-2)b_i^{n-1}uu'= b_i^{n-1}|uu'|\cdot\mbr{ \omega(b_i)-(n-2)}.
		\end{equation}
		Consequently, from \eqref{o3}, 
		\begin{equation}\label{o5}
			\omega(\theta_i)   > \omega(b_i)>n-2.
		\end{equation}
		Since $|u(r)|$ decreases in $\sbr{c_{i-1},\tau_i}$, we see from  \eqref{o3} that $|u(\theta_i)|\leq |u(b_i)|= \alpha_*=e^{1/2}$. Hence,   using \eqref{o4}, \eqref{o5} and the fact that 
		$f(u)=u\log u^2$, we obtain 
		\begin{equation}\label{u5}
			\frac{2(n-2)}{n}<  \log u^2(\theta_i)< \log u^2(b_i)=1,
		\end{equation}
		which is impossible if $n\geq 4$. The proof is completed.\end{proof}

	\begin{lemma}\label{lemma dimension3}
		Assume that \eqref{assumption1} and \eqref{assumption2} hold. Let $n=3$ and  suppose $\tau_i>b_i$. Then for every $i\in\lbr{1,\ldots,k-1}$, and also for $i=k$ if $u$ is a bound state, we have 
		\begin{equation}\label{o6}
			H_i:=\int_{b_i}^{\tau_i} u^2(r)\log \sbr{\abs{\frac{\tilde{a}}{u(r)}}^2} \frac{Q_3(r)}{ru'^2(r)} ~\mathrm{d}r>0,
		\end{equation}
		where $\tilde{a}$  is a constant such that  $\tilde{a}\geq  e^{1/2}$.
	\end{lemma}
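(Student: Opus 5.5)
The plan is to recognise the integrand of $H_i$ as the derivative of the bridge function $B_a$ from \eqref{defi of Ba}, for a suitable $a\geq 0$. Then $H_i$ reduces to boundary values at $b_i$ and $\tau_i$.

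\emph{Identifying $H_i$ with $B_a$.} For $f(u)=u\log u^2$ we have $uf(u)-2F(u)=u^2$, so
\[
-2F_a(u)=u^2\bigl(1+a-\log u^2\bigr)=u^2\log\bigl(e^{1+a}/u^2\bigr).
\]
Choosing $a=2\log\tilde a-1$, which is $\geq 0$ exactly because $\tilde a\geq e^{1/2}$, gives $-2F_a(u)=u^2\log(|\tilde a/u|^2)$. Hence, with $n=3$,
\[
H_i=\int_{b_i}^{\tau_i}B_a'(r)\,\mathrm{d}r=B_a(\tau_i)-B_a(b_i).
\]
The integral is legitimate since $u'\neq 0$ on $[b_i,\tau_i]\subset(c_{i-1},r_i)$. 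We also record the pointwise facts on $(b_i,\tau_i)$ available from \eqref{p1}: $1<|u|<\alpha_*$, $uv>0$, $f(u)v>0$, $uu'<0$, $u'v'>0$, $uv'<0$, and the weight $u^2\log(\tilde a^2/u^2)$ is positive.

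\emph{Splitting in $a$.} Since $B_a=B_0+aG$ with $G:=-M+r^{2}u^2v/u'$ and $G'=u^2Q_3/(ru'^2)$, it suffices to prove two things: $B_0(\tau_i)>B_0(b_i)$, and $G(\tau_i)\geq G(b_i)$.

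The first is easy. At $\tau_i$ we have $v=0$, so $B_0(\tau_i)=Q(\tau_i)$. At $b_i$ we have $F(u(b_i))=0$, so $B_0(b_i)=Q(b_i)$. On $(b_i,\tau_i)$, $Q'=2r^{2}f(u)v>0$, hence $Q(\tau_i)>Q(b_i)$. This already gives $H_i>0$ for $\tilde a=e^{1/2}$.

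\emph{The main obstacle: the $a$-coefficient.} It remains to show $\int_{b_i}^{\tau_i}u^2Q_3/(ru'^2)\,\mathrm{d}r\geq 0$, i.e. $G(\tau_i)\geq G(b_i)$. The boundary values are
\[
G(\tau_i)=\tau_i^2u(\tau_i)v'(\tau_i),\qquad G(b_i)=-M(b_i)+b_i^2u^2v/u'\big|_{b_i}=b_i^2\Bigl[uv'-u'v+\frac{u^2v}{u'}\Bigr]_{b_i},
\]
and both are negative, so a direct comparison is delicate.

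My first attempt exploits the structure of $Q_3$ on $[b_i,\tau_i]$:
\begin{enumerate}
\item Show that $Q_3$ changes sign at most once there, from negative to positive. Note $Q_3(\tau_i)=\tau_i^3u'v'>0$. Use the argument of Lemma \ref{lemma Qnpositive4} at a zero $\theta$ of $Q_3$: combining $W(\theta)>0$ with $Q_3(\theta)=0$ forces $\omega(\theta)<4f(u)/(uf'(u))=4\log u^2/(\log u^2+2)$. Together with $\omega'>0$ and $\omega(b_i)>1$ from $P(b_i)>0$, this should confine the negative part of $Q_3$ to an initial segment $(b_i,\theta)$.
\item Exploit the weight ratio. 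On $(b_i,\tau_i)$, $|u|$ decreases, so $w_0/u^2=1-\log u^2$ increases in $r$.
\item If this ordering alone is insufficient, estimate $G(\tau_i)-G(b_i)$ directly. Rewrite it using the identity $Q(\tau_i)-Q(b_i)=\int 2r^2u\log u^2\,v$ and $M(\tau_i)-M(b_i)=\int 2r^2uv$. Since $\log u^2<1$ on this range, we have $M$-growth $>$ $Q$-growth. Combine this with the Wronskian inequality \eqref{u2} at $b_i$ to control the extra term $b_i^2u^2v/u'$.
\end{enumerate}

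I expect this last comparison, namely controlling the $a$-linear part uniformly in $a\geq 0$, to be the genuinely hard step. If it fails in general, the fallback is to prove $H_i>0$ only for the specific $\tilde a$ needed later, using the single-sign-change structure from step 1 and the monotone weight from step 2.
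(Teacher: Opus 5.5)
\paragraph{Verdict: genuine gap.} Your reduction is correct and neat, but it does not prove the lemma. Since $uf(u)-2F(u)=u^2$, one has $-2F_a(u)=u^2\log(\tilde a^2/u^2)$ with $a=2\log\tilde a-1\geq 0$. Hence $H_i=B_a(\tau_i)-B_a(b_i)=[Q(\tau_i)-Q(b_i)]+a[G(\tau_i)-G(b_i)]$. The $a=0$ part is positive because $Q'=2r^2f(u)v>0$ on $(b_i,\tau_i)$.

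However, $H_i$ is affine in $a$ and you only control $a=0$. So the lemma for all $\tilde a\geq e^{1/2}$ is \emph{equivalent} to $G(\tau_i)\geq G(b_i)$. Using $u^2/(ru'^2)=r/\omega^2$, this is the inequality $\int_{b_i}^{\tau_i} rQ_3/\omega^2\,\mathrm{d}r\geq 0$. The reduction therefore moves the whole difficulty to the $a$-coefficient, and that inequality is never proved. The fallback does not help either. In Lemma \ref{lemma T2} the lemma is applied with $\tilde a=|u(t_i)|$, where $t_i\in(c_{i-1},b_i)$. So $\tilde a>e^{1/2}$ strictly (i.e. $a_i>0$), with no control on its size.

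\paragraph{Why the sketched steps do not close the gap.}
In step 2, the multiplier turning the $a=0$ weight $w_0=-2F(u)$ into the $u^2$ weight is $u^2/w_0=1/(1-\log u^2)$. This multiplier is decreasing and blows up at $b_i$, exactly where $Q_3<0$. It therefore amplifies the negative part, so $\int w_0Q_3/(ru'^2)>0$ yields only an \emph{upper} bound for $\int u^2Q_3/(ru'^2)$.

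In step 3, note that
$G(\tau_i)-G(b_i)=M(b_i)-M(\tau_i)+b_i^2u^2(b_i)|v(b_i)|/|u'(b_i)|$.
You therefore need an \emph{upper} bound on the growth of $M$. The observation $M(\tau_i)-M(b_i)>Q(\tau_i)-Q(b_i)$ is a lower bound, so it points the wrong way.

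Step 1 as phrased only confines the zeros of $Q_3$ to an initial segment. A single crossing does follow, because at any zero $Q_3'=4ru'v(1+rv'/v)>0$, using $-rv'/v>\omega>1$ from $M>0$. The paper in fact proves the stronger statement $Q_3'>0$ on $(b_i,\tau_i)$.

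\paragraph{What is missing: the paper's quantitative argument.}
\begin{enumerate}
\item Pull the increasing positive weight $r\log(\tilde a^2/u^2)$ out at the crossing point $\theta_i$. This reduces the lemma to $\int_{b_i}^{\tau_i}Q_3/\omega^2\,\mathrm{d}r>0$.
\item Bound the $\omega^{-2}$ factors using the monotonicity of $\omega$. This produces the factor $\kappa_i^2=\omega^2(\tau_i)/\omega^2(b_i)$.
\item Change variables $r\mapsto u$, using that $|u'|$ decreases on $(b_i,\tau_i)$. This comes from $u$ having a unique inflection point in $(c_{i-1},z_i)$.
\item Show $|Q_3(\theta_i^-)|<Q_3(\theta_i^+)$ at points symmetric in $u$ about $u(\theta_i)$. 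This uses that $f(u)v/(u'v')$ is decreasing and $r^2|v'|$ is increasing.
\item Use the ratio bound $|u(\tau_i)-u(\theta_i)|/|u(\theta_i)-u(b_i)|>\kappa_i^2$. This rests on the specific values $|u(b_i)|=e^{1/2}$ and $\log u^2(\theta_i)>2/3$.
\end{enumerate}
None of these ingredients appears in your plan.
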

	\begin{proof}
		The proof is inspired by that of \cite[Lemma 5.4]{TangInvent2026}. We first consider the case $\theta_i \leq b_i$. Then we have $Q_3(r) > 0$ on $\sbr{b_i,\tau_i}$, and consequently \eqref{o6} is trivially satisfied because  
		\begin{equation} \label{o9}
			1=|u(r_i)|<|u(r)| < |u(b_i)| = e^{1/2} \leq \tilde{a}  \quad \text{ on }~~\sbr{b_i,\tau_i}\subset \sbr{b_i,r_i}.
		\end{equation}
		It remains to consider the   case $ b_i<\theta_i<\tau_i$. We divide the proof into five steps.  
		
		\vskip0.04in
		{\it Step 1.} We   show that
		\begin{equation}\label{o8}
			Q_3'(r) >0 ~~ \text{ on } \sbr{b_i,\tau_i}.
		\end{equation}
		Observe that  
		\begin{equation}\label{u6}
			Q_3'(r)=r^{2}[3u'v'-f(u)v]=r^{2}u'v' \mbr{ 3- \frac{f(u)v}{u'v'} },
		\end{equation}
		and from \eqref{radialform} and \eqref{equv} we have 
		\begin{equation}\label{o10}
			\begin{aligned}
				\frac{\mathrm{d}}{\mathrm{d}r} \sbr{\frac{f(u)v}{u'v'}}&=\frac{\mbr{f'(u)u'v+f(u)v'}u'v'-f(u)v\mbr{u''v'+u'v''}}{\sbr{u'v'}^2}
				\\&=\frac{1}{u'v'}\mbr{f'(u)u'v+f(u)v'+\frac{ f(u)v}{ {u'v'} }\sbr{\frac{4}{r}u'v'+f(u)v'+f'(u)u'v}}\\
				&<\frac{1}{u'v'}\mbr{f'(u)u'v+f(u)v'+\frac{4}{r}f(u)v},
			\end{aligned}
		\end{equation}
		where the last inequality follows from \eqref{p1}.
		\vskip0.04in
		
		Since $M(c_{i-1})>0$ and $M'(r)=r^{2}\mbr{uf'(u)-f(u)}v=2r^{2}uv>0$ on $\sbr{c_{i-1},\tau_i}$, it follows that  $M (r)>M(c_{i-1})>0$ on $\sbr{b_i,\tau_i}\subset \sbr{c_{i-1},\tau_i}$. By the definition of $M$, we have $u'v>uv'$ on $\sbr{b_i,\tau_i} $, which implies that $ -rv'/v>-ru'/u =\omega(r)$ on this interval.  Similar to \eqref{o5}, we have $\omega(r)>\omega(b_i)>1$ on $\sbr{b_i,\tau_i} $.  
		Therefore, using \eqref{p1}, \eqref{o9}, and \eqref{o10}, we obtain
		\begin{equation}
			\begin{aligned}
				\frac{\mathrm{d}}{\mathrm{d}r} \sbr{\frac{f(u)v}{u'v'}} &<\frac{1}{u'v'}\mbr{\frac{f'(u)uv}{r }\sbr{ \frac{3f(u)}{uf'(u)} -\omega(r) }  +\frac{f(u)v}{r}\sbr{rv'/v+1 }}\\
				&<\frac{f'(u)uv}{ru'v'}\mbr{ \frac{3f(u)}{uf'(u)} -1 }\\
				&=\frac{f'(u)uv}{ru'v'}\cdot\frac{\log u^2 -2}{\log u^2 +2} <0, \quad \text{ on }~ \sbr{b_i,\tau_i}.
			\end{aligned}
		\end{equation}
		Hence, the function  ${f(u)v}/\sbr{{u'v'}} $ is strictly decreasing on $\sbr{b_i,\tau_i}$. We now proceed to prove \eqref{o8}. Assume by contradiction  that there exists $ \gamma_* \in \sbr{b_i,\tau_i}$ such that $Q_3'(\gamma_*)=0$, which implies that 
		\begin{equation}\label{io11}
			f(u)v=3u'v', \quad \text{ at }~r=\gamma_*.
		\end{equation}
		Moreover,  we have 
		$Q_3'(r)<0$ in  $\sbr{b_i,\gamma_* } $ and 
		$Q_3'(r)>0$ in  $\sbr{ \gamma_*,\tau_i } $. Consequently, for every    $r\in \sbr{b_i,\tau_i }$, we deduce from \eqref{io11} that
		\begin{equation}
			Q_3(r)\geq Q_3(\gamma_*)=\gamma_*^2\mbr{\gamma_*\sbr{u'v'+f(u)v}+4u'v }=4\gamma_*^2u'v(1+\gamma_*v'/v)>0,
		\end{equation}
		which contradicts   the assumption that $b_i<\theta_i<\tau_i$ and the definition of $\theta_i$. We thus conclude that \eqref{o8} is true.
		\vskip0.04in
		{\it Step 2.} We prove that $u''$  has a unique zero in $\sbr{c_{i-1},z_i}$. Without loss of generality, we focus on the case of odd $i$; the case of even $i$ follows a similar argument, with careful attention paid to the signs of the corresponding quantities. In this setting,   we have $u>0$ and $u'<0$ on $\sbr{c_{i-1},z_i}$, and 
		\begin{equation}
			u>1  ~~\text{ on } [c_{i-1},r_i),   \quad u(r_i)=1,\quad u<1  ~~\text{ on } \sbr{r_i,z_i},
		\end{equation}
		Thus,    $u''(c_{i-1})=-f(u(c_{i-1})  )<0$, and 
		\begin{equation}\label{o88}
			u''(r)=-2u'/r-f(u)>0, \quad \text{ in }~\sbr{r_i, z_i}.
		\end{equation}
		Hence, $u''$ has a zero in $\sbr{c_{i-1},r_i}$. Let $   \gamma_i \in \sbr{c_{i-1},r_i}$ be the first such  zero. Then 
		\begin{equation}
			u''(r)<0 ~~ \text{ in }~\sbr{c_{i-1}, \gamma_i }\quad \text{ and  }\quad  u''( \gamma_i )=0 ,
		\end{equation}
		and 
		\begin{equation}\label{op11}
			0\leq u'''( \gamma_i )=\frac{2}{  \gamma_i^{2}}u'(  \gamma_i )-f'(u(  \gamma_i ))u'(  \gamma_i )  .
		\end{equation}
		Since $u'(  \gamma_i )<0$ and $u''( \gamma_i )=0 $,   we see from \eqref{radialform} and \eqref{op11} that
		\begin{equation}
			-\frac{2}{  \gamma_i } u'=f(u), \quad   \frac{2}{  \gamma_i^{2}}\leq f'(u)\quad \text{ at  }~~ r=  \gamma_i.
		\end{equation}
		Therefore, 
		\begin{equation}\label{p99}
			\omega(  \gamma_i)=-\frac{ \gamma_i u'}{u}= -\frac{2}{  \gamma_i } u'\cdot \frac{  \gamma_i^{2}}{2} \cdot \frac{1}{u}\geq \frac{f(u)}{uf'(u)} \quad \text{ at  }~~ r=  \gamma_i.
		\end{equation}
		Define 
		\begin{equation} 
			\tilde{\omega} (r):=\frac{f(u)}{uf'(u)}=\frac{\log u^2}{\log u^2+2}.
		\end{equation}
		A direct calculation shows that  $\tilde{\omega} (r)$ is strictly decreasing on $\sbr{ \gamma_i,r_i}$. Moreover, by Lemma \ref{lemma positivefunction},  we have $\omega'>0$ on $\sbr{  \gamma_i,r_i}$. Then from \eqref{p99}, we obtain 
		\begin{equation}\label{o77}
			\omega(r)>\omega(  \gamma_i)>\tilde{\omega} (  \gamma_i)>\tilde{\omega}(r)\quad \text{ for } ~r\in \sbr{  \gamma_i,r_i},
		\end{equation}
		which implies that,  if $u''(\tilde{r})=0$ at some $\tilde{r}\in \sbr{ \gamma_i,r_i}$, then there holds $u'''(\tilde{r})>0$. 
		\vskip0.04in
		If $ u'''( \gamma_i )>0$, then $u''$ changes sign from negative to positive at  $ \gamma_i$, and stays positive in $\sbr{  \gamma_i,r_i}$.  Then from \eqref{o88},  $u''$  has a unique zero in $\sbr{c_{i-1},z_i}$.  It remains to consider the case 
		\begin{equation}
			u''(  \gamma_i )=u'''( \gamma_i )=0, \quad \text{ and  }\quad u''(r)<0 ~ \text{ in  } ~(  \gamma_i,  \gamma_i +\varepsilon]~ \text{ for a small }  \varepsilon>0.
		\end{equation}
		In such case, $u''$ has a zero in $\sbr{\gamma_i,r_i}$ since $u''(r_i)=-2u'(r_i)/r_i>0$. Then $u''$ has a negative minimum value $\tilde\gamma_i \in \sbr{\gamma_i,r_i}$ such that 
		\begin{equation}
			u''(\tilde\gamma_i)= -\frac{2}{\tilde\gamma_i}u'-f(u)<0\quad \text{ and }\quad 0=u'''(\tilde\gamma_i)>\frac{2}{  \gamma_i^{2}}u' -f'(u )u' .
		\end{equation}
		Since $u'(\tilde\gamma_i)<0$, we have $ -2u'/{\tilde\gamma_i}<f(u)$ and $2/{  \gamma_i^{2}}  >f'(u ) $. Then 
		\begin{equation}
			\omega(\tilde\gamma_i ) = -\frac{2}{  \tilde\gamma_i } u'\cdot \frac{  \tilde\gamma_i^{2}}{2} \cdot \frac{1}{u}<\tilde{\omega}  (\tilde\gamma_i ),
		\end{equation}
		which contradicts \eqref{o77}. Hence,  $u''$  has a unique zero in $\sbr{c_{i-1},z_i}$.
		\vskip 0.04in
		{\it Step 3.} We prove that $u'^2$ is strictly decreasing in $\sbr{b_i,\tau_i}$.   Observe that $$u'(z_i)u''(z_i) = -2u'^2(z_i)/z_i < 0,$$ and $u'$ keeps a constant sign on $(c_{i-1},z_i)$. It follows from Step 2 that   there exists $   \gamma_i \in \sbr{c_{i-1},r_i}$ such that 
		\begin{equation} 
			u'u''>0  ~ \text{ in  } ~ \sbr{c_{i-1},\gamma_i }, \quad \text{ and   }  \quad u'u''<0  ~ \text{ in  } ~  (\gamma_i,z_i ].
		\end{equation}
		Assume, to the contrary, that Step 3 does not hold. Then we must have  $b_i<\gamma_i$ 
		and  
		\begin{equation} \label{u1}
			u''v=-\frac{2}{b_i}u'v-f(u)v<0\quad \text{ at }  r=b_i.
		\end{equation}
		From Step 1 and $ b_i<\theta_i<\tau_i$, we know $Q_3(b_i)<Q_3(\theta_i)=0$. Combining this with  \eqref{defi of Qi} and \eqref{u1}, we obtain
		\begin{equation}\label{u3}
			u'v'+f(u)v<-\frac{4}{b_i} u'v<2f(u)v \quad \text{ at }  r=b_i.
		\end{equation}As in \eqref{u2}, we have
		\begin{equation}
			W(b_i)=b_i^{2} \mbr{f'(u)u'v-f(u)v'}  >W(c_{i-1} )\geq 0.
		\end{equation}
		Hence, at $r=b_i$,
		\begin{equation}
			\begin{aligned}
				-f(u)v'&>-f'(u)u'v= \frac{b_if'(u)}{4} \cdot \sbr{- \frac{4}{b_i} u'v}>\frac{b_if'(u)}{4}\cdot\mbr{ u'v'+f(u)v} .
			\end{aligned}
		\end{equation}
		From \eqref{u3} we then deduce, at $r=b_i$,   
		\begin{equation}
			\begin{aligned}
				2b_iu'v' <  b_i \mbr{ u'v'+f(u)v} <-\frac{4f(u)v'}{f'(u)}.
			\end{aligned}
		\end{equation}
		Combining this with \eqref{u4} gives 
		\begin{equation}
			1< \omega(b_i) =-  \frac{b_iu'}{u}|_{r=b_i}<\frac{2f(u)}{uf'(u)}|_{r=b_i}= \frac{2}{3},
		\end{equation}
		where we used  $|u(b_i)|=\alpha_*=e^{1/2}$.
		This is a contradiction.
		
		\vskip 0.04in
		{\it Step 4.} We prove that for each $\theta_i^-\in [b_i,\theta_i)$, there exists a unique $\theta_i^+\in (\theta_i,\tau_i)$ such that 
		\begin{equation}\label{o55}
			u(\theta_i^-)-u(\theta_i)= u(\theta_i)-u(\theta_i^+),
		\end{equation}
		By an argument similar to that used for \eqref{u5}, we  obtain $\log u^2(\theta_i)>\frac{2}{3}$.
		Hence, 
		\begin{equation}
			0<|u(b_i)|-|u (\theta_i)| <e^{1/2}-e^{1/3}.
		\end{equation}
		It follows from \eqref{u6} that 
		\begin{equation}\label{u8}
			\begin{aligned}
				Q_3(\tau_i) &=3\int_{\theta_i}^{\tau_i} r^2u'v' ~\mathrm{d}r- \int_{\theta_i}^{\tau_i} r^2f(u)v ~\mathrm{d}r<3\int_{\theta_i}^{\tau_i} r^2u'v' ~\mathrm{d}r
				<3  \int_{\theta_i}^{\tau_i} r^2|u'| |v'|~\mathrm{d}r.
			\end{aligned}
		\end{equation}
		Using \eqref{p1} and \eqref{equv2}, we find 
		\begin{equation}\label{o22}
			(r^2|v'|)'=r^2 f'(u)|v|>0, \quad \text{ on }~\sbr{\theta_i,\tau_i}.
		\end{equation}
		Then from \eqref{u8} we infer
		\begin{equation}
			\begin{aligned}
				\tau_i^3 |u'(\tau_i) | |v'(\tau_i) | =   Q_3(\tau_i)  
				<3\tau_i^2 |v'(\tau_i)| \int_{\theta_i}^{\tau_i} |u'| ~\mathrm{d}r=3\tau_i^2 |v'(\tau_i)| \cdot|u(\theta_i)-u(\tau_i)|.
			\end{aligned}
		\end{equation}
		Since $|u(\tau_i) |>1$ and $\omega(b_i)>1$, we deduce
		\begin{equation}
			3|u(\theta_i)-u(\tau_i)|> \tau_i|u'(\tau_i) |=|u(\tau_i) |\cdot\omega(\tau_i)>\omega(\tau_i)=\kappa_i\omega(b_i)>\kappa_i, 
		\end{equation}
		where  $\kappa_i= {\omega(\tau_i)}/{\omega(b_i)}>1$ because $\omega'>0$ in $(0,\infty)$. Moreover, note that $1<|u(\tau_i)|<|u(\theta_i)|<|u(b_i)|=e^{1/2}$, and then 
		\begin{equation}
			|u(\theta_i)-u(b_i)|<e^{1/2}-1-|u(\theta_i)-u(\tau_i)|<e^{1/2}-1-\kappa_i/3.
		\end{equation}
		Therefore,
		\begin{equation}\label{o44}
			\frac{u(\tau_i)-u(\theta_i)}{u(\theta_i)-u(b_i)}= \frac{|u(\tau_i)-u(\theta_i)|}{|u(\theta_i)-u(b_i)|}>\frac{\kappa_i}{3\sbr{e^{1/2}-1}-\kappa_i}>\frac{\kappa_i}{2-\kappa_i}>\kappa_i^2>1.
		\end{equation}
		Then for each $\theta_i^-\in [b_i,\theta_i)$, we can find  $\theta_i^+\in (\theta_i,\tau_i)$ such that 
		\begin{equation}
			u(\theta_i^-)-u(\theta_i)= u(\theta_i)-u(\theta_i^+).
		\end{equation}
		Uniqueness follows directly from the monotonicity of $u$ on $\sbr{b_i,\tau_i}$.
		
		\vskip 0.04in
		{\it Step 5.}  We proceed to establish \eqref{o6}. Given that  $|u(r)|<|u(b_i)|<\tilde{a}$, $Q_3(\theta_i)=0$ and $Q_3'(r)>0$ holds for all  $r\in \sbr{b_i,\tau
			_i}$, it follows that
		\begin{equation}\label{y1}
			\begin{aligned}
				H_i&=\int_{b_i}^{\tau_i}\log \sbr{\abs{\frac{\tilde{a}}{u(r)}}^2} \frac{rQ_3(r)}{\omega^2(r)} ~\mathrm{d}r \\&=\int_{b_i}^{\theta_i}\log \sbr{\abs{\frac{\tilde{a}}{u(r)}}^2} \frac{rQ_3(r)}{\omega^2(r)} ~\mathrm{d}r+\int_{\theta_i}^{\tau_i}\log \sbr{\abs{\frac{\tilde{a}}{u(r)}}^2} \frac{rQ_3(r)}{\omega^2(r)} ~\mathrm{d}r\\&>\theta_i \log \sbr{\abs{\frac{\tilde{a}}{u(\theta_i)}}^2}\int_{b_i}^{\tau_i}\frac{Q_3(r)}{\omega^2(r)} ~\mathrm{d}r.
			\end{aligned}
		\end{equation}
		Lemma \ref{lemma positivefunction} implies that $\omega(r)$ is increasing on $\sbr{\theta_i,\tau_i}$. Therefore, we have
		\begin{equation}\label{y2}
			\begin{aligned}
				\int_{b_i}^{\tau_i}\frac{Q_3(r)}{\omega^2(r)} ~\mathrm{d}r&=\int_{\theta_i}^{\tau_i} \frac{Q_3(r)}{\omega^2(r)} ~\mathrm{d}r-\int_{b_i}^{\theta_i}  \frac{|Q_3(r)|}{\omega^2(r)} ~\mathrm{d}r \\
				&>\frac{1}{\omega^2(\tau_i)}\int_{\theta_i}^{\tau_i} {Q_3(r)}~\mathrm{d}r-\frac{1}{\omega^2(b_i)}\int_{b_i}^{\theta_i} {|Q_3(r)|} ~\mathrm{d}r\\
				&=\frac{1}{\omega^2(\tau_i)}\mbr{\int_{\theta_i}^{\tau_i} {Q_3(r)}~\mathrm{d}r-\kappa_i^2\int_{b_i}^{\theta_i} {|Q_3(r)|} ~\mathrm{d}r }.
			\end{aligned}
		\end{equation}
		Since $u(r)$ is monotone in $(b_i,\tau_i)$, we denote its inverse by  $r_u$, that is  $u(r_u)=u$. For each $\mu$ between $u(\theta_i)$ and $u(b_i)$, there exists a unique $s\in \mbr{0,1}$ such that 
		\begin{equation}\label{o33}
			\mu=u(r_\mu)=u(\theta_i)+ [u(b_i)-u(\theta_i)] s.
		\end{equation}
		This gives a bijection between   $\mu$ and $s$. We set $\tilde r_s:=r_{\mu(s)} $.  From Step 3,   $|u'(r)|$ is strictly decreasing in $(b_i,\tau_i)$. 
		Therefore,
		\begin{equation}\label{y3}
			\begin{aligned}
				\int_{b_i}^{\theta_i} {|Q_3(r)|} ~\mathrm{d}r &< \frac{1}{u'(\theta_i) }\int_{b_i}^{\theta_i} {|Q_3(r)|} u'(r) ~\mathrm{d}r =\frac{1}{u'(\theta_i) }\int_{u(b_i)}^{u(\theta_i)} {|Q_3(r_u)|} ~\mathrm{d}u\\&=\frac{u(\theta_i)-u(b_i)}{u'(\theta_i) }\int_{0}^{1} {|Q_3(\tilde r_s)|} ~\mathrm{d}s.
			\end{aligned}
		\end{equation}
		Analogously, for each  $\mu$ between $u(\theta_i)$ and $u(\tau_i)$, we can uniquely choose $s\in \mbr{0,1}$  with
		\begin{equation}\label{o222}
			\mu=u(r_\mu)=u(\theta_i)+ [u(\tau_i)-u(\theta_i)] s.
		\end{equation}
		In such case, we define $\hat r_s:=r_{\mu(s)} $. Then 
		\begin{equation}\label{y4}
			\begin{aligned}
				\int_{\theta_i}^{\tau_i} {Q_3(r)}~\mathrm{d} r &>\frac{1}{u'(\theta_i) }\int_{\theta_i}^{\tau_i} {Q_3(r)} u'(r) ~\mathrm{d}r =\frac{1}{u'(\theta_i) }\int_{u(\theta_i)}^{u(\tau_i)} {Q_3(r_u)} ~\mathrm{d}u\\&=\frac{u(\tau_i)-u(\theta_i)}{u'(\theta_i) }\int_{0}^{1} {Q_3(\hat r_s)} ~\mathrm{d}s.
			\end{aligned}
		\end{equation}
		For any  given $s\in \mbr{0,1}$, we write $\theta_i^-=\tilde r_s\in  [b_i,\theta_i)$. By Step 4, there exists a unique $\theta_i^+\in  \sbr{\theta_i,\tau_i}$  such that 
		\begin{equation}
			|u(\theta_i)-u(\theta_i^+)|=| u(\theta_i^-)-u(\theta_i)|.
		\end{equation}
		From \eqref{u6} and \eqref{o22},  we obtain 
		\begin{equation}
			\begin{aligned}
				Q_3(\theta_i^+)&=\int_{\theta_i}^{\theta_i^+}r^{2}u'v' \mbr{ 3- \frac{f(u)v}{u'v'} }~\mathrm{d}r>\mbr{ 3- \frac{f(u)v}{u'v'} |_{r=\theta_i} }  \int_{\theta_i}^{\theta_i^+}r^{2}|u'||v'|~\mathrm{d}r\\
				&>\mbr{ 3- \frac{f(u)v}{u'v'} |_{r=\theta_i} } \theta_i^2\cdot|v'(\theta_i)|\cdot |u(\theta_i)-u(\theta_i^+)|,
			\end{aligned}
		\end{equation}
		because ${f(u)v}/\sbr{u'v'}<3$  and  this ratio is decreasing  on $(b_i,\tau_i)$. Similarly, 
		\begin{equation}
			\begin{aligned}
				| Q_3(\theta_i^-)|&=\int_{\theta_i^-}^{\theta_i}r^{2}u'v' \mbr{ 3- \frac{f(u)v}{u'v'} }~\mathrm{d}r<\mbr{ 3- \frac{f(u)v}{u'v'} |_{r=\theta_i} }  \int_{\theta_i^-}^{\theta_i}r^{2}|u'||v'|~\mathrm{d}r\\
				&<\mbr{ 3- \frac{f(u)v}{u'v'} |_{r=\theta_i} } \theta_i^2\cdot|v'(\theta_i)|\cdot |u(\theta_i)-u(\theta_i^-)|.
			\end{aligned}
		\end{equation}
		Therefore, 
		\begin{equation}\label{o111}
			| Q_3(\theta_i^-)|< Q_3(\theta_i^+).
		\end{equation}
		Moreover, using \eqref{o44}, \eqref{o33} and \eqref{o222}, we have
		\begin{equation}
			\begin{aligned}
				&|u(\theta_i)-u(\theta_i^+)|=| u(\theta_i^-)-u(\theta_i)|= |u(\tilde r_s)-u(\theta_i)|\\
				&=|u(b_i)-u(\theta_i)| s<|u(\tau_i)-u(\theta_i)|s=|u(\hat r_s)-u(\theta_i)|.
			\end{aligned}   
		\end{equation}
		Therefore, $\hat r_s>\theta_i^+$. From Step 1 and \eqref{o111}, we obtain
		\begin{equation}\label{y5}
			Q_3(\hat r_s)>Q_3(\theta_i^+)> | Q_3(\theta_i^-)|= | Q_3(\tilde  r_s)|.
		\end{equation}
		Therefore, from \eqref{o44}, \eqref{y3}, \eqref{y4} and \eqref{y5}, we deduce
		\begin{equation}
			\begin{aligned}
				&\int_{\theta_i}^{\tau_i} {Q_3(r)}~\mathrm{d}r-\kappa_i^2\int_{b_i}^{\theta_i} {|Q_3(r)|} ~\mathrm{d}r \\
				& >\frac{u(\tau_i)-u(\theta_i)}{u'(\theta_i) }\int_{0}^{1} {Q_3(\hat r_s)} ~\mathrm{d}s-\frac{u(\theta_i)-u(b_i)}{u'(\theta_i) }\kappa_i^2\int_{0}^{1} {|Q_3(\tilde r_s)|} ~\mathrm{d}s\\
				& >\frac{u(\theta_i)-u(b_i)}{u'(\theta_i) } \mbr{\frac{u(\tau_i)-u(\theta_i)}{u(\theta_i)-u(b_i) } -\kappa_i^2}\int_{0}^{1} {|Q_3(\tilde r_s)|} ~\mathrm{d}s>0.
			\end{aligned}
		\end{equation}
		Finally, from \eqref{o9}, \eqref{y1} and \eqref{y2} we conclude  $H_i>0$. This completes the proof.
	\end{proof}
	
	\begin{lemma}
		We have \begin{equation}\label{T2B0connection}
			T_2(r)=B_0(r)+\frac{F(u)}{uu'}T_2'(r).
		\end{equation} 
	\end{lemma}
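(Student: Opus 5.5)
The identity is purely algebraic, so I will expand both sides using the definitions \eqref{defi of T2}, \eqref{defi of Ba} (with $a=0$) and the derivative formula for $T_2'$ in \eqref{defi of deriQMT}. It is understood at every $r>0$ with $u(r)\neq 0$ and $u'(r)\neq 0$, since both sides involve division by $u$ and $u'$.

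\emph{Step 1 (the right-hand side).} With $a=0$ we have $F_0=F$, hence
\begin{equation}
B_0(r)=Q(r)-2F(u)\,\frac{r^{n-1}v}{u'}.
\end{equation}
From \eqref{defi of deriQMT},
\begin{equation}
T_2'(r)=-\frac{2u'}{u}M(r)+2r^{n-1}uv.
\end{equation}
Multiplying by $F(u)/(uu')$ gives
\begin{equation}
\frac{F(u)}{uu'}\,T_2'(r)=-\frac{2F(u)}{u^2}\,M(r)+2F(u)\,\frac{r^{n-1}v}{u'}.
\end{equation}

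\emph{Step 2 (cancellation).} Adding the two expressions from Step 1, the terms $\pm 2F(u)r^{n-1}v/u'$ cancel. Hence
\begin{equation}
B_0(r)+\frac{F(u)}{uu'}\,T_2'(r)=Q(r)-\frac{2F(u)}{u^2}\,M(r).
\end{equation}

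\emph{Step 3 (identifying $g_2$).} Since $F(u)=\frac12u^2(\log u^2-1)$, we get
\begin{equation}
\frac{2F(u)}{u^2}=\log u^2-1=g_2(u).
\end{equation}
Equivalently, this follows from the general formula $g_2=2F/(uf-2F)$, because $uf(u)-2F(u)=u^2$ for the logarithmic nonlinearity. Therefore the right-hand side of Step 2 equals $Q-g_2(u)M=T_2$, which proves \eqref{T2B0connection}.

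There is no genuine obstacle here. The only point needing care is to record that the identity is asserted where $uu'\neq0$, i.e.\ away from zeros and critical points of $u$. Away from those points the computation above is a finite algebraic manipulation.
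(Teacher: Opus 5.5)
Your proof is correct and follows the paper's approach: both rest on the formula $T_2'=-\frac{2u'}{u}M+2r^{n-1}uv$ and the identification $g_2(u)=2F(u)/u^2$. The only difference is direction: the paper solves for $M$ and substitutes it into $T_2=Q-g_2M$, while you expand $B_0+\frac{F(u)}{uu'}T_2'$ and let the $\pm 2F(u)r^{n-1}v/u'$ terms cancel; your remark that the identity is meant where $uu'\neq 0$ is a reasonable addition.
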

	\begin{proof}
		It follows from \eqref{defi of deriQMT} that 
		\begin{equation}
			M(r)=\frac{u}{2u'}\mbr{ 2r^{n-1}uv-T_2'(r)}.
		\end{equation}
		Since $g_2(u)=\log u^2-1={2F(u)}/{u^2}$, we have 
		\begin{equation}
			\begin{aligned}
				T_2(r)&=Q(r)-g_2(u)M(r)=Q(r)-\frac{2F(u)}{u^2} \cdot \frac{u}{2u'}\mbr{ 2r^{n-1}uv-T_2'(r)}\\
				&=Q(r)-2F(u)\cdot \frac{r^{n-1}v}{u'}+\frac{F(u)}{uu'}T_2'(r)\\
				&=B_0(r)+\frac{F(u)}{uu'}T_2'(r),
			\end{aligned}
		\end{equation}
		where we use the definition  $B_a(r)$ \eqref{defi of Ba} with $a=0$. This completes the proof.
	\end{proof}
	\begin{lemma}\label{lemma T2}
		Assume that \eqref{assumption1} and \eqref{assumption2} hold. We further assume that    for  any given $i\in\lbr{2,\ldots,k-1}$, and also for $i=k$ if $u$ is a bound state,  
		\begin{equation}\label{assumption3}
			Q(r)>0\quad  \text{ in }~(c_{i-1},b_i]\cup \mbr{\bar b_i,c_i}, \quad Q(\bar b_i)>Q(b_i),  
		\end{equation}
		and
		\begin{equation}\label{assumption4}
			T_2(r)>0  \quad  \text{ in }~\mbr{c_{i-1},b_i}, \quad Q_2(r)>0  \quad  \text{ in }~\mbr{c_{i-1},c_i},
		\end{equation}
		then we have
		\begin{equation}\label{u33}
			T_2(r)>0  \quad  \text{ in }~\mbr{\bar b_i,c_i}.
		\end{equation}
		In particular, $ T_2(r)>0$ in $\mbr{\bar b_1,c_1}$.
	\end{lemma}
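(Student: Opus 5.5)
The plan is to argue by contradiction at the first zero of $T_2$ in $[\bar b_i,c_i]$, using the identity \eqref{T2B0connection} together with the sign information available on the last sub-interval of the phase.

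\emph{Signs on $[\bar b_i,c_i]$.} By the boundary values of $\log$, we have $g_2(u(\bar b_i))=\log\alpha_*^2-1=0$, so
$$T_2(\bar b_i)=Q(\bar b_i)>0$$
by \eqref{assumption3}. Take $i$ odd for definiteness; the even case follows by $u\mapsto -u$, $v\mapsto -v$. By Lemma \ref{lemma sign} and \eqref{assumption2}, $\tau_i<r_i$ is the only zero of $v$ on $I_i$. Hence on $[\bar b_i,c_i]$ we have $u<-\alpha_*$, $u'<0$ and $v<0$. This gives
$$uu'>0,\quad u'v>0,\quad uv>0,\quad F(u)>0,\quad g_2(u)>0,\quad \omega<0 .$$
In particular $M'=2r^{n-1}uv>0$ there, and the coefficient $F(u)/(uu')$ in \eqref{T2B0connection} is positive.

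\emph{Contradiction step.} Suppose \eqref{u33} fails, and let $\zeta\in(\bar b_i,c_i]$ be the first zero of $T_2$. Then $T_2'(\zeta)\le 0$, and \eqref{T2B0connection} yields $B_0(\zeta)=-\frac{F}{uu'}T_2'(\zeta)\ge 0$. On the other hand $T_2(\zeta)=0$ means $Q=g_2M$. Using $2F=g_2u^2$, I would rewrite
$$B_0(\zeta)=g_2\Big[M-\frac{r^{n-1}u^2v}{u'}\Big]=\frac{g_2r^{n-1}}{u'}\big(u'^2v-uu'v'-u^2v\big).$$
The aim is to show this is strictly negative, in the spirit of \eqref{oi3}, where $B_0$ collapsed to $-\frac{2r^{n-1}v}{u'}E<0$. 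For that I would combine three facts: $Q_2(\zeta)=Q(\zeta)+2\zeta^{n-1}u'v>0$ from \eqref{assumption4}; the relation $Q=g_2M$, which trades $u'v'$ for $M$; and $E(\zeta)>0$ from Lemma \ref{lemma propertyE}. I expect that this recasts the bracket as a negative multiple of $\zeta^{n-1}vE$, plus a term whose sign is controlled by $Q_2$. If the pointwise computation does not close, I would instead propagate information along the phase. Specifically, $T_2(b_i)=Q(b_i)$ (again $g_2=0$ there) and $T_2>0$ on $[c_{i-1},b_i]$ by \eqref{assumption4}. Integrating $B_0'=-2F\,Q_n/(ru'^2)$ over $[b_i,\bar b_i]$, where $F<0$, and using $Q(\bar b_i)>Q(b_i)$ should give a lower bound for $M$ at $\bar b_i$. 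Since $M'>0$ on $[\bar b_i,c_i]$, that bound would keep $M>0$ there. Then $T_2'=\frac{2}{r}[\omega M+r^nuv]$ should be compared with $T_2$ itself to forbid $T_2$ from reaching zero.

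\emph{Expected obstacle.} The main difficulty is exactly this sign of $B_0(\zeta)$, equivalently of $T_2'(\zeta)$. The reason is that $\omega<0$ and $uv>0$ enter $T_2'$ with opposite signs, so $T_2$ need not be monotone on $[\bar b_i,c_i]$, and the logarithmic form of $g_2$ must be used explicitly. The endpoint $\zeta=c_i$ is handled separately. There $u'=0$, so $M(c_i)=-c_i^{n-1}uv'$ and $T_2(c_i)=c_i^n f(u)v-g_2M(c_i)$; I would check positivity directly using $Q(c_i)>0$ and $Q_2(c_i)=Q(c_i)$.

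\emph{The case $i=1$.} For the final claim, all hypotheses are supplied by Proposition \ref{prop phase1}: $Q>0$ on $(0,z_1]\cup[\bar b_1,c_1]$, $Q_2>0$ on $(0,c_1]$, and $M>0$. It also gives $T_2=T_1+M>0$ on $(0,b_1]\subset(0,z_1)$. Finally, $Q(\bar b_1)>Q(b_1)$ follows from the monotonicity $B_0'>0$ on $(z_1,\bar b_1)$ in \eqref{q3}, combined with $Q'>0$ on $(b_1,z_1)$ because $f(u)v<0$ there would need checking. With these in hand, the general argument applies verbatim.
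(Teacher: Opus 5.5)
Your proposal has a genuine gap at exactly the step you flag as the obstacle, and a pointwise computation at $\zeta$ cannot close it. Since $F(u)/(uu')>0$ on $(\bar b_i,c_i)$, the relation $B_0(\zeta)=-\frac{F(u)}{uu'}T_2'(\zeta)$ from \eqref{T2B0connection} makes $B_0(\zeta)<0$ literally equivalent to $T_2'(\zeta)>0$. That is, you would need $T_2$ to cross zero only upwards, which is the content of the lemma itself. The analogy with \eqref{oi3} breaks down. There, $Q_1=0$ pins $Q$ to the exact value $-r^{n-1}u'v$; here you only have inequalities, and they point the wrong way. Indeed $Q_2(\zeta)>0$ yields only $B_0(\zeta)>-\frac{2\zeta^{n-1}v}{u'}\bigl(u'^2+F(u)\bigr)$, which is a negative lower bound. (On $[\bar b_i,c_i]$ the condition $Q_2>0$ is anyway automatic from $Q>0$ and $u'v>0$.) All local information at $\zeta$ is compatible with $T_2'(\zeta)\le 0$: $Q=g_2M>0$, hence $M>0$; $\omega<0<uv$; $E>0$. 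The reason is that $T_2'=\frac{2}{r}\bigl[r^nuv-|\omega|M\bigr]$ is a difference of positive quantities. Your fallback does not help either. $F$ vanishes at $b_i$ and $\bar b_i$, so integrating $B_0'$ over $[b_i,\bar b_i]$ just returns $Q(\bar b_i)-Q(b_i)$ and says nothing about $M$. Moreover, a lower bound on $M$ works against you, since $T_2=Q-g_2M$ with $g_2>0$ there. Tellingly, the hypothesis $T_2>0$ on $[c_{i-1},b_i]$ plays no real role in your sketch.

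The missing idea is a nonlocal comparison across the phase. The paper first differentiates \eqref{T2B0connection} and uses \eqref{defi of Ba} to get $T_2''(\bar t_i)=\frac{2u}{\bar t_iu'}Q_n(\bar t_i)>0$ at any critical point $\bar t_i\in(\bar b_i,c_i)$. Combined with $T_2'(c_i)=2c_i^{n-1}u(c_i)v(c_i)>0$ and $T_2(\bar b_i)=Q(\bar b_i)>0$, this reduces everything to the sign of $T_2$ at a unique interior minimum $\bar t_i$; it also disposes of your endpoint case $\zeta=c_i$ at once. The paper then takes the mirror point $t_i\in(c_{i-1},b_i)$ with $u(t_i)=-u(\bar t_i)$, and the bridge function $B_{a_i}$ with $a_i=g_2(u(t_i))$, chosen so that $F_{a_i}$ vanishes at both $t_i$ and $\bar t_i$. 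Hence $T_2(\bar t_i)=T_2(t_i)+D_i$ with $D_i=\int_{t_i}^{\bar t_i}u^2\log\bigl(|u(t_i)/u|^2\bigr)\frac{Q_n}{ru'^2}\,dr$. Here $T_2(t_i)>0$ is precisely \eqref{assumption4}, and the real work is showing $D_i>0$. That rests on the mirror-point inequality \eqref{u777}, which combines several ingredients:
\begin{itemize}
\item the comparison \eqref{j1} imported from Tang's Lemma 5.5;
\item the monotonicity of $P/r^n$;
\item the connection identity \eqref{ConnectionPQ};
\item the hypothesis $Q_2>0$ on the early part of the phase, where $u'v<0$, together with \eqref{assumption3};
\item Lemma \ref{lemma Qnpositive4} ($n\ge 4$) or Lemma \ref{lemma dimension3} ($n=3$), to control the region where $Q_n<0$.
\end{itemize}
None of this appears in your proposal, so your remark that the general argument applies verbatim for $i=1$ has nothing to apply. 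Separately, for $i=1$ your claim that $Q'>0$ on $(b_1,z_1)$ is false in general, since $f(u)>0>v$ on $(\max\{b_1,\tau_1\},r_1)$. Instead, $Q(\bar b_1)>Q(b_1)$ should come from $B_0'>0$ on $(\max\{b_1,\tau_1\},\bar b_1)$, where $F<0$ and $Q_n\ge Q_1>0$, together with $Q'>0$ on $(b_1,\tau_1)$ when $b_1<\tau_1$.
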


	\begin{proof}
		
		For any given   $i\in\lbr{2,\ldots,k-1}$, and also for $i=k$ if $u$ is a bound state, Lemma \ref{lemma sign} implies that $v(r)$ has a unique zero $\tau_i\in(c_{i-1},r_i)$ in the interval $\mbr{c_{i-1},c_i}$. Note that by \eqref{defi of deriQMT}, $T_2'(c_i)=2c_i^{n-1}u(c_i)v(c_i)>0$. Consequently, if $T_2$ has no critical points in $\sbr{\bar{b}_i,c_i}$, then on this interval  $T_2'(r)>0$  and $T_2(r)>T_2(\bar{b}_i)=Q(\bar{b}_i)>0$. Hence, \eqref{u33} holds.
		\vskip0.04in
		We now turn to the case  where $T_2$   has critical points in    $\sbr{\bar{b}_i,c_i}$. Let $\bar t_i$   be an arbitrary critical point  within this interval. Using \eqref{T2B0connection} and \eqref{defi of Ba} with $a=0$, we obtain
		\begin{equation}
			T_2'(r)=-2F(u)\cdot \frac{Q_n(r)}{ru'^2}+\sbr{\frac{F(u)}{uu'}}'T_2'(r)+\frac{F(u)}{uu'}T_2''(r).
		\end{equation}
		Since $T_2'(\bar t_i)=0$, it follows that 
		\begin{equation}
			T_2''(\bar t_i)=   \frac{ 2u   }{\bar t_iu' } \cdot Q_n(\bar t_i)  , \quad \text{ at } r=\bar t_i.
		\end{equation} 
		Because  $u'v>0$ and $u/u'>0$ in $\sbr{\bar{b}_i,c_i}$, we have $Q_n(\bar t_i) >Q (\bar t_i)>0$,   and therefore $T_2''(\bar t_i)>0$.  Hence, $\bar{t}_i$ is a   minimum point of $T_2$ and, in fact, the unique  critical
		point of $T_2$ in $\sbr{\bar{b}_i,c_i}$.  Therefore, 
		\begin{equation}\label{y11}
			T_2'(r)<0 \text{ in } \sbr{\bar{b}_i,\bar{t}_i }, \quad  T_2'(\bar{t}_i)=0, \quad \text{ and }\quad T_2'(r)>0 \text{ in } \sbr{ \bar{t}_i,c_i }.
		\end{equation}
		and 
		\begin{equation}\label{p11}
			T_2(\bar{t}_i)=\min_{r\in \mbr{\bar{b}_i,c_i}} T_2(r).
		\end{equation}
		Since $|u(c_{i-1})|>|u(c_i)|$ and $|u(r)|$ is monotone in $(c_{i-1},c_i)$, we can find a unique $t_i\in \sbr{c_{i-1},b_i}$  such that $$u(t_i)=-u(\bar{t}_i).$$ Let 
		\begin{equation}
			a_i=g_2(u(t_i))=g_2(u(\bar t_i))=\log u^2(t_i)-1.
		\end{equation}
		Then 
		\begin{equation}
			\begin{aligned}
				F_{a_i}(u(t_i))&=F(u(t_i))-\frac{a_i}{2}\mbr{u(t_i)f(u(t_i))-2F(u(t_i))}\\
				&=\frac{1}{2}u^2(t_i)\mbr{\log u^2(t_i)-1-a_i }=0.
			\end{aligned}
		\end{equation}
		Therefore, 
		\begin{equation} \label{u55}
			\begin{aligned}
				T_2(\bar t_i)-  T_2( t_i)&=B_{a_i}(\bar t_i)-B_{a_i}(  t_i)= \int_{t_i}^{\bar t_i}B_{a_i}'(r) ~\mathrm{d} r = \int_{t_i}^{\bar t_i} -2F_{a_i}(u)\cdot \frac{Q_n(r)}{r|u'(r)|^2}  ~\mathrm{d} r .
			\end{aligned}
		\end{equation}
		Set 
		\begin{equation}
			D_i:=\int_{t_i}^{\bar t_i} -2F_{a_i}(u)\cdot \frac{Q_n(r)}{r|u'(r)|^2}  ~\mathrm{d} r=\int_{t_i}^{\bar t_i} u^2(r)\log \sbr{\abs{\frac{u(t_i)}{u(r)}}^2} \frac{Q_n(r)}{r|u'(r)|^2} ~\mathrm{d}r.
		\end{equation}
		It suffices to show  that $D_i>0$. Indeed, from \eqref{p11} and \eqref{assumption4}, it follows that for any $r \in \mbr{b_i,t_i}$, 
		\begin{equation}
			T_2( r)\geq     T_2(\bar t_i)= T_2( t_i)+  D_i>0,
		\end{equation}
		which is the desired result \eqref{u33}. Note that if $t_i\geq \theta_i$, then we have $Q_n(r)>0$ on $\sbr{t_i, \bar t_i}$. Since $|u(r)|<|u(t_i)|$ on the same interval,  it follows that $D_i>0$. 
		
		\vskip0.08in
		The case $t_i<\theta_i$ is more delicate. In this situation, from  
		$Q_n(\tau_i)>0$ we deduce that  
		\begin{equation}\label{y22}
			c_{i-1}<   t_i<\theta_i< \tau_i, \quad \text{and} \quad t_i<b_i, \quad \bar b_i<\bar t_i.
		\end{equation}
		For any $\mu$ in the interval $ [\alpha_*, |u(t_i)|)$, we define $r_{i\mu}\in (t_i,b_i]$ and $\bar r_{i\mu}\in [\bar b_i, \bar t_i)$ such that $u(r_{i\mu})=u(\bar r_{i\mu})=\mu$. Then by Lemma \ref{lemma propertyE} (ii) we have
		\begin{equation}\label{j6}
			|u'(r_{i\mu})|>|u'(\bar r_{i\mu})|.
		\end{equation}Inspired by  \cite[Lemma 5.6]{TangInvent2026}, we claim    that 
		\begin{equation}\label{u777}
			\frac{Q_n(\bar r_{i\mu})}{\bar r_{i\mu}  |u'(\bar r_{i\mu})|^3}+\frac{Q_n(r_{i\mu})}{r_{i\mu}  |u'(r_{i\mu})|^3}>0, \quad  \text{ for   } ~\mu\in R_i:=\sbr{\max \lbr{|u(\theta_i)|, |u(b_i)| }, |u(t_i)|}.
		\end{equation}
		Indeed, since  $R_i\subseteq \sbr{|u(b_i)|,|u(t_i)|}=\sbr{\alpha_*,|u(t_i)|}$ and all the assumptions in the proof of  \cite[Lemma 5.5]{TangInvent2026} remain valid (note that the proof of that lemma does not depend on the condition (C3)),  then by the same argument as in that lemma we obtain the following comparison result
		\begin{equation} \label{j1}
			\frac{Q(\bar r_{i\mu})}{\bar r_{i\mu}^{n-1} |u'(\bar r_{i\mu})|}>\frac{Q(r_{i\mu})}{r_{i\mu}^{n-1} |u'(r_{i\mu})|} ,\quad \text{ for  } ~\mu\in R_i.
		\end{equation}
		It follows from Lemma \ref{lemma positivefunction} that $P(r)/r^n$ is universally decreasing. Hence, we deduce from \eqref{j6}, \eqref{j1} that for $\mu\in R_i$, 
		\begin{equation}\label{j8}
			\begin{aligned}
				& \frac{  \bar r_{i\mu}^{n-2}Q(\bar r_{i\mu})  }{P(\bar r_{i\mu})  |u'(\bar r_{i\mu})|^2}= \frac{Q(\bar r_{i\mu})}{\bar r_{i\mu}^{n-1} |u'(\bar r_{i\mu})|}\cdot  \frac{ \bar r_{i\mu}^{n-3}  }{|u'(\bar r_{i\mu})|} \cdot\frac{\bar r_{i\mu}^{n}}{P(\bar r_{i\mu})  }\\
				>&\frac{Q(r_{i\mu})}{r_{i\mu}^{n-1} |u'(r_{i\mu})|} \cdot\frac{  r_{i\mu}^{n-3}  }{|u'( r_{i\mu})|} \cdot\frac{ r_{i\mu}^{n}}{P( r_{i\mu})  } =\frac{   r_{i\mu}^{n-2}Q( r_{i\mu})  }{P( r_{i\mu})  |u'(r_{i\mu})|^2}.
			\end{aligned} 
		\end{equation}By \eqref{y22}, we have $u'v<0$ on $\sbr{t_i,\min\lbr{\theta_i,b_i}}\subset \sbr{c_{i-1},\tau_i }$ and $u'v>0$ on $\sbr{\bar b_i,\bar t_i}\subset \sbr{\tau_i, c_i}$. It then follows from \eqref{assumption4} that
		\begin{equation}\label{j2}
			-Q_n(r)=-Q_2(r)-(n-2)r^{n-1}u'v\leq nr^{n-1}|u'v|\quad \text{on } \sbr{t_i,\min\lbr{\theta_i,b_i}},
		\end{equation}
		while from \eqref{assumption3} we obtain
		\begin{equation}\label{j3}
			Q_n(r)=Q(r)+nr^{n-1}u'v>nr^{n-1}u'v=nr^{n-1}|u'v|\quad \text{on } \sbr{\bar b_i,\bar t_i}.
		\end{equation}
		Moreover,  we claim that 
		\begin{equation}\label{Connection2}
			|v|<\frac{Q(r)}{P(r)}|u| \quad \text{in } ~\sbr{t_i,\tau_i}, \quad \quad  |v|>\frac{Q(r)}{P(r)}|u| \quad \text{in } ~\sbr{\bar b_i,\bar t_i}.
		\end{equation}
		In fact, by a direct calculation, we have the following connection identity  
		\begin{equation}\label{ConnectionPQ}
			Q(r)-P(r)\cdot \frac{v}{u}=\omega(r) M(r)+r^nuv= \omega(r)\mbr{ M(r)- \frac{r^{n-1}u^2v}{u'}}.
		\end{equation}
		Since $uv$, $\omega$, $M>0$  in $(c_{i-1},\tau_i)$, then it follows from \eqref{ConnectionPQ}  that $Q(r)>P(r)v/u$; thus the first part of \eqref{Connection2} holds. On $\sbr{\bar b_i,\bar t_i}$,  by using \eqref{ConnectionPQ}, \eqref{defi of deriQMT}  and  \eqref{y11}, we obtain
		\begin{equation}
			Q(r)-P(r)\cdot\frac{v}{u}= \frac{1}{2}rT_2'(r)<0 \quad \text{for } r\in\sbr{\bar b_i,\bar t_i},
		\end{equation}
		which implies that the second part of \eqref{Connection2} holds, since $uv>0$ on this interval. 
		\vskip0.08in
		
		Note that for  $\mu\in R_i$, we have $ r_{i\mu}\in \sbr{t_i,\min\lbr{\theta_i,b_i}}$ and $\bar r_{i\mu}\in \sbr{\bar b_i,\bar t_i}$. Then we deduce from  \eqref{j8},  \eqref{j2}, \eqref{j3} and  \eqref{Connection2}   that  for $\mu\in R_i$, 
		\begin{equation}
			\begin{aligned}
				& \frac{Q_n(\bar r_{i\mu})}{\bar r_{i\mu}  |u'(\bar r_{i\mu})|^3} > \frac{ n\bar r_{i\mu}^{n-2}| v(\bar r_{i\mu})|}{   |u'(\bar r_{i\mu})|^2}  >  \frac{ n\bar r_{i\mu}^{n-2}Q(\bar r_{i\mu}) |u(\bar r_{i\mu})|}{P(\bar r_{i\mu})  |u'(\bar r_{i\mu})|^2} ~~~ \quad \text{ by }~  \eqref{j3},~\eqref{Connection2}\\
				=& \frac{ n\mu\bar r_{i\mu}^{n-2}Q(\bar r_{i\mu})  }{P(\bar r_{i\mu})  |u'(\bar r_{i\mu})|^2}>\frac{   n\mu r_{i\mu}^{n-2}Q( r_{i\mu})  }{P( r_{i\mu})  |u'(r_{i\mu})|^2}
				>\frac{   n\mu r_{i\mu}^{n-2}  |v(r_{i\mu})| }{   |u(r_{i\mu})||u'(r_{i\mu})|^2} \quad \text{ by }~  \eqref{j8},~\eqref{Connection2}\\=&\frac{   n r_{i\mu}^{n-2}  |v(r_{i\mu})| }{    |u'(r_{i\mu})|^2}>- \frac{    Q_n(r_{i\mu})   }{  r_{i\mu}  |u'(r_{i\mu})|^3} \quad \quad \quad \quad \quad \quad \quad \quad \quad \quad \quad~  \text{ by }~  \eqref{j2}.
			\end{aligned}
		\end{equation}
		Therefore, the relation \eqref{u777} holds. Now we are ready to prove $D_i>0$  when  $t_i<\theta_i$.     The proof is divided  into   two  cases  $\theta_i\leq b_i$ and $\theta_i>b_i$.
		
		\vskip 0.08in  
		\textit{Case 1.} If $\theta_i\leq b_i$, then we choose $\bar \theta_i\in \sbr{z_i,c_i}$   such that   $u(\bar \theta_i)=-u( \theta_i)$.
		Since $Q_n(r)>0$ in $(\theta_i,c_i]$ and $|u(r)|< |u(t_i)|$ in $\sbr{t_i,\bar t_i}$, we obtain 
		\begin{equation}\label{u666}
			\int_{\theta_i}^{\bar \theta_i} W_i(u)\cdot \frac{Q_n(r)}{r|u'(r)|^2} ~\mathrm{d}r >0, 
		\end{equation}
		where 
		\begin{equation}
			W_i(u(r)):=u^2(r)\log \sbr{\abs{\frac{u(t_i)}{u(r)}}^2}>0 ~~\text{ in } \sbr{t_i,\bar t_i}.
		\end{equation}
		If $i$ is odd, then $u>0$, $u'<0$ in $\sbr{t_i,\theta_i}$, and $u<0$, $u'<0$ in $\sbr{\bar \theta_i, \bar t_i}$. Therefore, since $|u(\theta_i)|\geq |u(b_i)|$, we deduce   from \eqref{u777} and \eqref{u666} that
		\begin{equation} \label{u44}
			\begin{aligned}
				&  D_i >\int_{t_i}^{\theta_i} W_i(u(r))\cdot \frac{Q_n(r)}{r|u'(r)|^2} ~\mathrm{d}r+\int_{\bar \theta_i}^{\bar t_i}W_i(u(r))\cdot \frac{Q_n(r)}{r|u'(r)|^2} ~\mathrm{d}r\\
				&=\int_{u(t_i)}^{u( \theta_i)} W_i(u )\cdot \frac{Q_n(r_{iu})}{r_{iu}|u'(r_{iu})|^2 u'(r_{iu})}     \mathrm{d}u+\int_{u( \bar \theta_i)}^{u( \bar t_i)}W_i(u)\cdot \frac{Q_n(\bar r_{i(-u)})}{\bar r_{i(-u)}|u'(\bar r_{i(-u)})|^2u'(\bar r_{i(-u)})}  \mathrm{d}u\\
				&=\int_{|u( \theta_i)|}^{|u(t_i)|} W_i(u )\cdot \frac{Q_n(r_{iu})}{r_{iu}|u'(r_{iu})|^3  }    ~\mathrm{d}u+\int_{|u( \theta_i)|}^{|u(t_i)|} W_i(u)\cdot \frac{Q_n(\bar r_{iu})}{\bar r_{iu}|u'(\bar r_{iu})|^3} ~\mathrm{d}u\\
				&=\int_{|u(t_i)|}^{|u( \theta_i)|} W_i(u )\cdot \sbr{ \frac{Q_n(\bar r_{i\mu})}{\bar r_{i\mu}  |u'(\bar r_{i\mu})|^3}+\frac{Q_n(r_{i\mu})}{r_{i\mu}  |u'(r_{i\mu})|^3} }~\mathrm{d}u>0.
			\end{aligned}
		\end{equation}
		If   $i$ is even, then $u<0$, $u'>0$ in $\sbr{t_i,\theta_i}$, while $u>0$, $u'>0$ in $\sbr{\bar \theta_i, \bar t_i}$.  Then, similarly to \eqref{u44}, we also obtain  $D_i>0$. 
		
		\vskip 0.08in
		\textit{Case 2.} If $\theta_i>b_i$, then $Q_n(b_i)<0$. According to Lemma \ref{lemma Qnpositive4}, this situation is possible only for $n=3$.  In such case, by Lemma \ref{lemma dimension3} and $Q_3(r)>0$ on $\sbr{\tau_i,\bar b_i}\subset \sbr{\theta_i,c_i}$, we obtain 
		\begin{equation}
			\int_{b_i}^{\tau_i} W_i(u ) \cdot\frac{Q_n(r)}{r|u'(r)|^2} ~\mathrm{d}r>0, \quad \int_{\tau_i}^{\bar b_i} W_i(u ) \cdot\frac{Q_n(r)}{r|u'(r)|^2} ~\mathrm{d}r>0.
		\end{equation}
		Because 
		$|u(\theta_i)|< |u(b_i)|$, an argument similar to Case 1 together with \eqref{u777} gives
		\begin{equation}  
			\begin{aligned}
				&  D_i >\int_{t_i}^{b_i} W_i(u(r))\cdot \frac{Q_n(r)}{r|u'(r)|^2} ~\mathrm{d}r+\int_{\bar b_i}^{\bar t_i}W_i(u(r))\cdot \frac{Q_n(r)}{r|u'(r)|^2} ~\mathrm{d}r\\
				&=\int_{|u(t_i)|}^{|u(b_i)|} W_i(u )\cdot \sbr{ \frac{Q_n(\bar r_{i\mu})}{\bar r_{i\mu}  |u'(\bar r_{i\mu})|^3}+\frac{Q_n(r_{i\mu})}{r_{i\mu}  |u'(r_{i\mu})|^3} }~\mathrm{d}u>0.
			\end{aligned}
		\end{equation}
		Consequently, we have proved that   \eqref{u33} holds for any given   $i\in\lbr{2,\ldots,k-1}$, and also for $i=k$ if $u$ is a bound state.
		
		Finally, we consider the case $i=1$. In such case,   Proposition \ref{prop phase1} and the identity $g_1(u)=g_2(u)+1$ imply $T_2(r)>T_1(r)>0$ on $\sbr{0,z_1}$. We now prove 
		\begin{equation}\label{y66}
			Q(\bar b_1)>Q(b_1).
		\end{equation}
		Indeed, if $b_1\geq \theta_1$, then on $\sbr{b_1,\bar b_1}$ we have $F(u)<0$ and  $Q_n(r)>0$; thus   $ B_0'(r)=-2F(u(r))\cdot  {Q_n(r)}/\sbr{ru'^2}>0$,and consequently 
		$Q(\bar b_1)=B_0(\bar b_1)>B_0( b_1)=Q( b_1)$. If instead $b_1 \leq \theta_1$, then $b_1<\tau_1$.  Since $u'v>0$ in $\sbr{\tau_1,\bar b_1}$, we deduce from Proposition \ref{prop phase1} that   $Q_n(r)>Q_1(r)>0$ in $\sbr{\tau_1,\bar b_1}$.  Consequently, $Q(\bar b_1)=B_0(\bar b_1) >B_0(\tau_1)=Q(\tau_1)$. Because  $uv>0$ and $ u >1$ in $\sbr{b_1,\tau_1}$, we see from   \eqref{defi of deriQMT}  that $Q'(r)=2r^{n-1} \sbr{u\log u^2 }v>0$; therefore, $Q(\bar b_1)>Q(\tau_1)>Q(b_1)$. Hence, \eqref{y66} holds. Note that Proposition \ref{prop phase1} also shows that all the assumptions made for  $i>1$ are all satisfied for $i=1$ as well, except that $M$, $Q$, $Q_1$, $Q_2$ and $T_2$ all vanish at $c_0=0$,   but when $i>1$ we need them to be positive at $c_{i-1}$.   Nevertheless, the arguments for $i>1$   still hold for $i=1$ with only minor modifications; we omit the details here.   The proof is completed.\end{proof}

	With Lemmas \ref{lemma sign}-\ref{lemma T2} at hand, we are now ready to prove Proposition \ref{prop phasei}.
	\begin{proof}[\bf Proof of Proposition \ref{prop phasei}]

		By Proposition \ref{prop phase1} and Lemma \ref{lemma T2}, we have $Q(c_1), M(c_1), T_2(c_1)>0$. 
		We now prove the proposition by induction under the assumption that for some $i\in\{2,\dots,k\}$  or $i=k+1$ if $u$ is a bound state,  
		\begin{equation}\label{sq0}
			Q(c_{i-1}), ~M(c_{i-1}), ~T_2(c_{i-1})>0 .  
		\end{equation}
		According to Lemma \ref{lemma sign}, $v(r)$ changes sign in $(c_{i-1},r_i)$. Let $\tau_i$ be the first zero in this interval. From \eqref{p1} we obtain  $uv,~f(u)v>0$ in $(c_{i-1},\tau_i)$. Hence, $Q'(r)=2r^{n-1}f(u)v>0$,  and $  M'(r)=2r^{n-1}uv>0 $ in $(c_{i-1},\tau_i]$. Then 
		\begin{equation}\label{sq2}
			\begin{aligned} 
				&Q(r)>Q(c_{i-1})>0,\quad M(r)>M(c_{i-1})>0,\quad r\in(c_{i-1},\tau_i].
			\end{aligned}
		\end{equation}
		We split the proof into four steps.  
		
		\vskip 0.1in
		{\it Step 1.} We   prove 
		\begin{equation}\label{sq1-1}
			T_2'(r),~T_2(r)>0,~~~~r\in[c_{i-1},\max\{\tau_i,b_i\}],
		\end{equation}
		for $i\in\{2,...,k\}$ and for $i=k+1$ if $u$ is a bound state.    The proof of   both cases is the same.
		\vskip 0.08in
		Since $uv$,  $M(r)$, $ -uu'\geq0$ in $(c_{i-1},\tau_i]$, we see from \eqref{defi of deriQMT} that  
		\begin{equation}\label{oi2}
			T_2'(r)=-\frac{2u'}{u}M(r)+2r^{n-1}uv\geq0,\quad \forall ~r\in[c_{i-1},\tau_i],
		\end{equation}
		and then we have 
		\begin{equation} \label{oi1}
			T_2(r)\geq T_2(c_{i-1})>0, \quad \forall~ r\in[c_{i-1},\tau_i].
		\end{equation}  This proves \eqref{sq1-1} in  the case $\tau_i\geq b_i$.  
		Then we consider the case $\tau_i<b_i$. We claim that
		\begin{equation}\label{sq1-2}
			\tau_i<b_i~~~~~\Rightarrow~~~~~uv<0~~~\text{in}~~~(\tau_i,b_i].
		\end{equation}
		This implies that $v$ has no zero in $(\tau_i,b_i]$.
		Suppose that \eqref{sq1-2} does not hold.  Then there exists $\tilde\tau_i\in(\tau_i,b_i]$ such that   
		\begin{equation}\label{sq1-3}  v(\tau_i)=v(\tilde{\tau}_i)=0, ~~~~~uv<0~~~\text{in}~~~(\tau_i,\tilde{\tau}_i).
		\end{equation}
		With the definitions of $T_2,~M,~Q$, we have 
		\begin{equation}\label{sq1-4}
			\begin{aligned}
				T_2(\tau_i)&=Q(\tau_i)-g_2(u(\tau_i))M(\tau_i)\\
				&=\tau_i^nu'v'+\tau_{i}^{n-1}g_2(u(\tau_i))uv'\\
				&=\tau_i^{n-1}u(\tau_i)v'(\tau_i)[g_2(u(\tau_i)-\omega(\tau_i))].
			\end{aligned}
		\end{equation}
		The same expression holds for $T_2(\tilde\tau_i)$ with $\tau_i$ replaced by $\tilde\tau_i$. It follows from \eqref{oi1} that  $T_2(\tau_i) > 0$.   Since $u(\tau_i)v'(\tau_i)<0$, we deduce from  \eqref{sq1-4} that $g_2(u(\tau_i))<\omega(\tau_i)$. Note that $g_2(u(r))=u\log u^2-1$ decreases in $(\tau_i,\tilde{\tau_i})$,  while $\omega(r)$ is increasing   in this interval by Lemma \ref{lemma positivefunction}, we have $$g_2(u(\tilde{\tau}_i))<\omega(\tilde{\tau}_i).$$ Because $v'$  has opposite signs at consecutive zeros,   we have $u(\tilde{\tau}_i)v'(\tilde{\tau}_i)>0$, which implies $T_2(\tilde{\tau}_i)<0$. Then from \eqref{oi2} and \eqref{oi1}, $T_2$ has a critical point within the interval $(\tau_i,\tilde\tau_i)$. 
		
		\vskip0.08in 
		
		Denote by $\tau_i^*$ the first critical point of $T_2$ in $(\tau_i,\tilde\tau_i)$. Then 
		\begin{equation}\label{sq1-5}
			\tau^*_i\in(\tau_i,\tilde{\tau}_i),~~~T_2(r),~T_2'(r)>0~\text{in}~[c_{i-1},\tau^*_i),~~~T_2'(\tau^*_i)=0.
		\end{equation}
		Since $u'(\tau_i^*)v(\tau_i^*)>0$ and $T_2'(\tau_i^*)=0$, we deduce from the definition of $T_2'$ in \eqref{defi of deriQMT} that 
		\begin{equation}\label{sq1-6}
			M(\tau_i^*)=\frac{\tau_i^{*n-1}u^2v}{u'}>0.
		\end{equation}
		On the other hand, we see from  \eqref{ConnectionPQ},  \eqref{sq1-6} that
		\begin{equation}\label{sq1-7}
			Q(\tau_i^*)=P(\tau_i^*)\cdot\frac{v(\tau_i^*)}{u(\tau_i^*)}<0,
		\end{equation}
		where the last inequality follows from the facts $u(\tau_i^*)v(\tau_i^*)<0$ and $P(r)>0$. Hence, we deduce from   \eqref{sq1-5} that   $T_2(\tau_i^*)>0$ and therefore, 
		\begin{equation}\label{sq1-8}
			g_2(u(\tau_i^*))M(\tau_i^*)=Q(\tau_i^*)-T_2(\tau_i^*)<0.
		\end{equation}
		Since $M(\tau_i^*)>0$, we get
		\begin{equation}\label{sq1-9}
			F(u(\tau_i^*))=\frac{1}{2}u^2(\tau_i^*)g_2(u(\tau_i^*))<0.
		\end{equation}
		However,  we have $F(u(\tau_i^*))>0$ because $\tau_i^*\in(\tau_i,b_i)$. This is a contradiction and we confirm \eqref{sq1-2}. 
		\vskip0.08in 
		Now we prove \eqref{sq1-1} by contradiction.  Suppose there exists a critical point $\tau_i^{**}$ of $T_2$ such that 
		\begin{equation}\label{sq1-10}
			\tau_i^{**}\in(\tau_i,b_i], ~~~~~T_2,~T_2'>0\text{ in } [c_{i-1},\tau_i^{**}),~~~~~T_2'(\tau_i^{**})=0.    
		\end{equation}
		Since  $u'(\tau_i^{**})v(\tau_i^{**})>0$,  by a similar argument as that of \eqref{sq1-6}, \eqref{sq1-7} and \eqref{sq1-8}, we obtain
		\begin{equation}\label{sq1-11}
			M(\tau_i^{**})=\frac{\tau_i^{**n-1}u^2v}{u'}>0,~~~~~Q(\tau_i^{**})= P(\tau_i^{**})\cdot\frac{v(\tau_i^{**})}{u(\tau_i^{**})}<0,
		\end{equation}
		and consequently
		\begin{equation}
			g_2(u(\tau_i^{**}))M(\tau_i^{**})=Q(\tau_i^{**})-T_2(\tau_i^{**})<0, 
		\end{equation}
		which again gives $g_2(u(\tau_i^{**}))<0$, and therefore $F(u(\tau_i^{**}))<0$. This contradicts the fact that $\tau_i^{**}\in(\tau_i,b_i]$.  Hence \eqref{sq1-1} holds.
		
		\vskip 0.1in
		{\it Step 2.} We   prove that for $i\in\{2,...,k\}$ and for $i=k+1$ if $u$ is a bound state,  
		\begin{equation}\label{sq2-1}
			Q(r),~Q_1(r),~Q_2(r)>0,\quad r\in[c_{i-1},\max\{b_i,\tau_i\}].
		\end{equation}  
		From \eqref{sq0}, we have
		\begin{equation}
			Q_1(c_{i-1})=Q_2(c_{i-1})=c_{i-1}^{n}f(u(c_{i-1}))v(c_{i-1})=Q(c_{i-1})>0.
		\end{equation}
		Moreover, by \eqref{defi of deriQMT} and Lemma~\ref{lemma sign}, the functions $Q$, $Q_1$, $Q_2$ are strictly increasing on $(c_{i-1},\tau_i)$. Consequently,  \begin{equation}
			Q(r),\,Q_1(r),\,Q_2(r) > 0\quad\text{for all } r\in[c_{i-1},\tau_i].
		\end{equation}
		This establishes \eqref{sq2-1} in the case $b_i\le\tau_i$.
		
		\vskip0.04in 
		Now assume $b_i > \tau_i$. Then \eqref{sq1-2} holds, so in $(\tau_i,b_i]$ we have $f(u)v < 0$, $Q'(r)<0$, and $u'v > 0$.   From \eqref{sq1-1} we obtain
		\begin{equation}
			Q(b_i)=T_2(b_i)>0.
		\end{equation}
		Since $Q(r)$ increases over $[c_{i-1},\tau_i]$ and decreases over $[\tau_i,b_i]$, it follows that $Q_2>Q_1>Q>0$ in $(\tau_i,b_i]$. Hence \eqref{sq2-1} holds also when $b_i>\tau_i$, completing the proof.
		
		\vskip 0.1in
		{\it Step 3.}  We prove the following:
		\begin{equation}\label{sq3-1}
			\left\{
			\begin{aligned}
				& Q_1>0 \quad \text{ in }~[\tau_i,\bar{b}_{i}] ~\text{ if }~ i\in\{2,...,k-1\};\\
				& Q_1>0 \quad\text{ in }~[\tau_{k},z_{k}] ~\text{ if }~ i=k \text{ and u is not a bound state};\\
				& Q_1>0 \quad\text{ in }~[\tau_k,\bar{b}_k] ~\text{ if }~ i=k \text{ and u is a bound state};\\
				& Q_1>0 \quad\text{ in }~[\tau_{k+1},\infty) ~\text{ if }~ i=k+1 \text{ and u is a bound state}.
			\end{aligned}
			\right.
		\end{equation}
		We argue by contradiction.   Let $q_i$ be the first zero of $Q_1$ in the respective interval. From \eqref{sq2-1}, we have $q_i > \max\{b_i,\tau_i\}$, and
		\begin{equation}\label{sq3-2}
			u'v>0 \text{ in }(\tau_i,q_i],~~~Q_n(r)\geq Q_1(r)>0\text{ in }[\tau_i,q_i),~~~Q_1(q_i)=0.
		\end{equation}We distinguish two cases. 
		First, we consider the case $b_i\leq\tau_i$. By   \eqref{defi of Ba}  and \eqref{sq3-2}, we have $B_0'(r) > 0$ on $(\tau_i,q_i)$. Using \eqref{sq2-1} we obtain
		\begin{equation}\label{sq3-3}
			B_0(q_i)>B_0(\tau_i)=Q(\tau_i)>0.
		\end{equation}
		Then we consider the case   $b_i>\tau_i$.  Then we see from \eqref{defi of Ba}  and \eqref{sq3-2} that  $B_0'(r)>0$ in $(b_i,q_i)$, and  again \eqref{sq2-1} yields
		\begin{equation}
			B_0(q_i)>B_0(b_i)=Q(b_i)>0.
		\end{equation}
		In both cases we conclude $B_0(q_i) > 0$. However,  similar to \eqref{oi3}, we have 
		\begin{equation}
			B_0(q_i)=-q_i^{n-1}u'v-2F(u)\frac{q_i^{n-1}v}{u'}=-\frac{2q_i^{n-1}v}{u'}E(q_i)<0.
		\end{equation}
		This is a contradiction and \eqref{sq3-1} is proved.
		
		\vskip 0.1in
		{\it Step 4.} We now complete the proof of  Proposition \ref{prop phasei}.
		We first claim that for $i\in\{2,...,k-1\}$ and for $i=k$ if $u$ is a bound state,
		\begin{equation}\label{sq4-1}
			Q(\bar{b}_i)>Q(b_i)>0.
		\end{equation}
		Indeed,   since $Q_1<0$ at the first possible zero of $v$ in the corresponding intervals in \eqref{sq3-1}, we see from \eqref{sq3-1}   that   $v$ has no zero in $(\tau_i,\bar{b}_i]$. Moreover $u'v>0$ in $(\tau_i,\bar{b}_i]$.   If $b_i \ge \tau_i$, then on $(b_i,\bar{b}_i]$ we have $Q_n(r) > Q_1(r) > 0$ and $B_0'(r)>0$, hence $$Q(\bar{b}_i)=B_0(\bar{b}_i) > B_0(b_i)=Q(b_i).$$  
		If $b_i < \tau_i$, then $B_0'(r)>0$ on $(\tau_i,\bar{b}_i)$, so $Q(\bar{b}_i) > Q(\tau_i)$. Since $Q'(r)>0$ on $(b_i,\tau_i)$, we obtain $$Q(\bar{b}_i) > Q(\tau_i) > Q(b_i) > 0,$$ where the positivity follows from \eqref{sq2-1}. This establishes \eqref{sq4-1}.
		
		\vskip0.08in
		Now consider $i\in\{2,\dots,k-1\}$ or $i=k$  if $u$
		is a bound state. Assume that $Q,~M,~T_2>0$ at $c_{i-1}$. By Lemma~\ref{lemma sign}, the first zero of $v$ in $[c_{i-1},c_i]$ occurs at $\tau_i\in(c_{i-1},r_i)$.   From \eqref{sq3-1}, $v$ has no zero in $(\tau_i,\bar{b}_i]$.  To prove that the zero is unique in $[c_{i-1},c_i]$, it suffices to show that $v$ has no zero in $[\bar{b}_i,c_i]$.
		We argue by contradiction.  
		Suppose there exists $\tilde\tau_i\in[\bar{b}_i,c_i]$ such that 
		\begin{equation}\label{sq4-2}
			u'v>0~\text{in}~(\tau_i,\tilde{\tau}_i),\qquad v(\tilde{\tau}_i)=0.
		\end{equation}
		Since $u'v'>0$ at $\tau_i$, we have $u'v'<0$ at $\tilde{\tau}_i$. Consequently,   $Q(\tilde{\tau}_i)=\tilde{\tau}_i^nu'v'<0$. On the other hand, on $[\bar{b}_i,\tilde\tau_i]$ we have $f(u)v>0$ and $Q'(r)=2r^{\,n-1}f(u)v>0$, so $Q(\tilde\tau_i) > Q(\bar{b}_i)$. By \eqref{sq4-1}, we obtain $Q(\bar{b}_i)>0$, hence $Q(\tilde\tau_i)>0$, a contradiction. Therefore $v$ has exactly one zero in $[c_{i-1},c_i]$.
		
		\vskip0.08in
		It remains to verify $Q,~M,~T_2>0$ at $c_i$. Because the zero of $v$ on $[c_{i-1},c_i]$ is unique, we have
		\begin{equation}\label{sq4-3}
			Q'(r)=2r^{\,n-1}f(u)v>0~ \text{on } ~[\bar{b}_i,c_i],\qquad u'v>0 ~\text{in } ~(\tau_i,c_i],\qquad Q(r)>0  ~\text{on }~ [\bar{b}_i,c_i],
		\end{equation}
		where the last inequality follows from \eqref{sq4-1}. 
		From \eqref{sq2-1} and \eqref{sq3-1}, we have $Q_1>0$ on $[c_{i-1},\bar{b}_i]$.  Moreover, we see from  \eqref{sq4-3} that  $Q_1(r)=Q(r)+r^{n-1}u'v>Q(r)>0$ on $[\bar{b}_i,c_i]$. Hence $Q_1>0$ on the whole interval $[c_{i-1},c_i]$.  On the other hand,  by \eqref{sq2-1}, we obtain $Q_2>0$ on $[c_{i-1},\tau_i]$.  Since $u'v>0$ in $(\tau_i,c_i]$, we have $Q_2 > Q_1 > 0$ on $(\tau_i,c_i]$, so $Q_2>0$ on $[c_{i-1},c_i]$ as well.  
		Using \eqref{sq1-1} and \eqref{sq4-1}, the hypotheses of Lemma~\ref{lemma T2} are satisfied on $[\bar{b}_i,c_i]$, giving $T_2>0$ on $[\bar{b}_i,c_i]$.   Finally, since  $M'(r)=2r^{\,n-1}uv$, it increases from $M(c_{i-1})>0$ to $M(\tau_i)$, then decreases to $M(z_i)=z_i^{\,n-1}u'v>0$, and finally increases to $M(c_i)$. Thus $M>0$ on $[c_{i-1},c_i]$. This completes the proof of Proposition~\ref{prop phasei} for the considered indices.

		\vskip0.08in
		The remaining cases are $i=k$ if $u$ is not a bound state, and $i=k+1$ if $u$ is a bound state. In both situations, the uniqueness of the zero of $v$ on $I_i$ follows directly from \eqref{sq3-1}, and the same arguments as above apply. The proof of  Proposition \ref{prop phasei} is finished. 
		
	\end{proof}

	\subsection{Proof of Theorems \ref{Thm} and \ref{theoremmain1}}
	In this subsection we prove our main results, namely Theorems~\ref{Thm} and \ref{theoremmain1}.  The proof relies on the following proposition concerning the behavior of the variation function $v$.
	\begin{proposition}\label{prop v}
		Let $u(r)=u(r,\alpha)$ be the solution of \eqref{radialform}. Then for any $\alpha>\alpha_*$, the  variation function $v(r)=v(r,\alpha)$ satisfies the following statements: 
		\begin{itemize}
			\medbreak
			\item[(i)]   If $n\geq 2$ and  $u(r)$ is a ground state,  then $v(r)$ has exactly one zero $\tau_1\in(0,+\infty)$.  Moreover, $v(r)$ is strictly monotonically decreasing for sufficiently large $r$ and $$\lim\limits_{r\to\infty}v(r)=-\infty.$$
			\medbreak
			\item[(ii)]   If $n\geq 3$ and  $u(r)$ is a $k$-node solution  with zeros $z_1<\cdots<z_k$, then $v(r)$ has precisely $k$ zeros $\tau_1<\cdots<\tau_k$ on $[0,z_k]$, where $\tau_1\in(0,z_1)$, and $\tau_i\in(z_{i-1},z_i)$ for $2\leq i\leq k$.   If, in addition, $u(r)$ is a bound state, then $v(r)$ has precisely one additional zero $\tau_{k+1}  \in [z_k,+\infty)$ satisfying  $\tau_{k+1}>c_k>z_k$, and $v(r)$ is strictly monotone for sufficiently large $r$ with  $$\lim\limits_{r\to\infty}|v(r)|=+\infty.$$
		\end{itemize} 
		
	\end{proposition}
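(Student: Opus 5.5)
Part (i) is exactly Proposition \ref{prop vgroundstate}: the unique zero $\tau\in(0,r_1)$, the eventual strict monotonicity of $v$ and $v\to-\infty$ are all proved there for every $n\ge 2$. For (ii) I would assemble the zero count from the earlier propositions and then prove divergence for bound states separately.

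\emph{Zeros on $[0,z_k]$.} Proposition \ref{prop phase1 one zero} (if $k=1$) or Proposition \ref{prop phase1} (if $k\ge2$) gives exactly one zero $\tau_1\in(0,r_1)\subset(0,z_1)$ of $v$ on $(0,z_1]$, resp. on $(0,c_1]$. For $2\le i\le k$, Proposition \ref{prop phasei} gives a unique zero $\tau_i\in(c_{i-1},r_i)\subset(z_{i-1},z_i)$ on $I_i$. The intervals $(0,c_1],[c_1,c_2],\dots,[c_{k-1},z_k]$ cover $[0,z_k]$, and at the overlap points $c_i$ we have $v(c_i)\ne0$ (since $\tau_i<r_i<c_i$ and $\tau_{i+1}>c_i$). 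So $v$ has exactly $k$ zeros on $[0,z_k]$. If $u$ is a bound state, Proposition \ref{prop phasei} with $i=k$ uses $I_k=[c_{k-1},c_k]$, so there is no zero in $(z_k,c_k]$. With $i=k+1$ it gives a unique zero $\tau_{k+1}\in(c_k,r_{k+1})$ on $[c_k,\infty)$. Here $r_{k+1}>c_k$ is the point where $|u|=1$ beyond $c_k$, which exists because $|u(c_k)|>\alpha_*$ and $u\to0$. Hence there is exactly one additional zero, and $\tau_{k+1}>c_k>z_k$.

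\emph{Divergence for bound states.} Without loss of generality take the sign after $z_k$ so that $uv<0$ on $(\tau_{k+1},\infty)$, and argue as in the ground-state case. For large $r$ we have $\log u^2+2<0$, so $(r^{n-1}v')'=-r^{n-1}(\log u^2+2)v$ has the sign of $v$. Hence $r^{n-1}v'$ is eventually strictly monotone, so $v$ is eventually monotone and has a limit in $[-\infty,\infty]$. Suppose $|v|$ stays bounded, with finite limit $L$. Using the Gaussian decay \eqref{decay} together with the same two-case analysis of $r^{n-1}v'$, I would show $Q(r)\to0$: either $r^{n-1}v'$ has a finite limit, or L'H\^opital with \eqref{convergence} gives $v'=O(r^3)$, and in both cases $r^nu'v'\to0$.

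On the other hand, $Q'=2r^{n-1}f(u)v>0$ on $(r_{k+1},\infty)$, because $f(u)$ has the sign of $u$ once $|u|<1$ and $uv<0$, so $f(u)v>0$. It therefore suffices to show $Q(r_{k+1})>0$, which contradicts $Q\to0$. Here I would use the sign information from Proposition \ref{prop phasei}/Lemma \ref{lemma sign} for $i=k+1$, namely $Q(c_k),M(c_k),T_2(c_k)>0$ together with \eqref{p1} on $(c_k,\tau_{k+1})$, to get $Q>0$ on $[c_k,\tau_{k+1}]$. On $(\tau_{k+1},r_{k+1})$ we have $Q'<0$, so positivity must be propagated differently. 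I would use $T_1=Q-g_1(u)M$: since $g_1(u(r_{k+1}))=\log 1=0$, we get $Q(r_{k+1})=T_1(r_{k+1})$. Then I would show $T_1>0$ there by combining $T_2=T_1+M$ with the behaviour of $M$ ($M'=2r^{n-1}uv<0$ after $\tau_{k+1}$) and $T_1'=-2u'M/u$. Alternatively, I would reuse the Step 3 bound $Q_1>0$ on $[\tau_{k+1},\infty)$ from \eqref{sq3-1}: at $r_{k+1}$ it gives $Q>-r^{n-1}u'v$, and that might be closed using $Q_1\to0$ and $Q_1'=r^{n-1}(u'v'+f(u)v)$.

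\emph{Main obstacle.} The delicate step is establishing $\lim Q>0$ (equivalently, excluding bounded $v$) in the tail after $c_k$, since $Q$ decreases on $(\tau_{k+1},r_{k+1})$. The first thing I would try is the $Q_1>0$ route combined with the decay limits, and only fall back on $T_1$ if that fails.
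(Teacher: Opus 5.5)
\textbf{Gap in the divergence step.} Your zero count and part (i) agree with the paper. So does the setup of the divergence argument: eventual monotonicity of $v$, and $Q\to0$ if $v$ had a finite limit. The gap is the step you flag as the main obstacle. You never exhibit a quantity whose limit is forced to be positive, and neither proposed route yields $Q(r_{k+1})>0$.

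\begin{itemize}
\item[(a)] The $Q_1$ route gives only $Q(r_{k+1})>-r_{k+1}^{n-1}u'v$. Since $u'v>0$ after $\tau_{k+1}$, this lower bound is negative. Moreover, $Q_1>0$ together with $Q_1\to0$ is perfectly consistent, because $Q_1'=r^{n-1}(u'v'+f(u)v)$ has no definite sign.
\item[(b)] The $T_1$ route is circular, since $T_1(r_{k+1})=Q(r_{k+1})$ is the same unknown. Under the bounded-$v$ hypothesis, $M$ increases until $\tau_{k+1}$ and then decreases to $0$, so $M>0$ on $(c_k,\infty)$. Hence $T_1'=-2u'M/u>0$ and $T_1=Q-\log u^2\,M\to0$, i.e.\ $T_1<0$ on the whole tail. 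To contradict this you would need independent positivity of $T_1$ somewhere on $[c_k,\infty)$, and nothing supplies it. The value $T_1(c_k)=T_2(c_k)-M(c_k)$ is a difference of two positive quantities. Also, $T_1$ blows up at every zero of $u$, because $g_1=\log u^2\to-\infty$ while $M(z_i)\neq0$. So its positivity near $r=0$ cannot be carried past $z_1,\dots,z_k$.
\end{itemize}

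\textbf{The missing idea.} Work with the bridge function $B_0=Q-2F(u)r^{n-1}v/u'$ rather than with $Q$ itself.
\begin{itemize}
\item[(a)] By \eqref{sq2-1} and \eqref{sq3-1}, $Q_1>0$ on $[c_k,\infty)$.
\item[(b)] Set $\rho:=\max\{\tau_{k+1},b_{k+1}\}$. Beyond $\rho$ one has $F(u)<0$ and $u'v>0$, so $Q_n\ge Q_1>0$ and $B_0'=-2F(u)Q_n/(ru'^2)>0$.
\item[(c)] $B_0(\rho)=Q(\rho)$, because $v(\tau_{k+1})=0$, resp.\ $F(u(b_{k+1}))=0$. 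Also $Q(\rho)>0$ by \eqref{sq2-1}. Hence $\lim_{r\to\infty}B_0(r)$ is positive, possibly $+\infty$.
\item[(d)] If $v$ had a finite limit, then $B_0-Q=-r^{n-2}u(\log u^2-1)\frac{ru}{u'}\,v\to0$ by \eqref{convergence} and \eqref{decay}. So $B_0\to0$, a contradiction.
\end{itemize}
This argument compares limits only and never needs $Q(r_{k+1})>0$. That is how it bypasses the interval $(\tau_{k+1},r_{k+1})$ on which $Q$ decreases.
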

	
	\begin{proof}
		Statement (i) follows directly from Proposition \ref{prop vgroundstate}. We now verify the  statement (ii).    Assume that  $n\geq 3$ and $u(r)$ is a $k$-node solution  with zeros $z_1<\cdots<z_k$.  When $k=1$, it follows from Proposition \ref{prop phase1 one zero}  that $v(r)$ has exactly one zero  $\tau_1$ on $[0,z_k]$. When $k=2$, Proposition \ref{prop phase1} shows that $v(r)$ has a unique zero $\tau_1$ in $[0,c_1]$, and $\tau_1\in \sbr{0,r_1}$. By Proposition \ref{prop phasei},  we have $Q(c_1),~M(c_1),~T_2(c_1)>0$, and $v$ has a unique zero $\tau_2$ in $[c_1,z_2]$ with $\tau_2\in(c_1,r_2)$.   Since
		$r_1 < z_1 < c_1 < r_2 < z_2$, it follows that $\tau_2$ is the unique zero of $v$ on $[z_1,z_2]$.  For $k\ge 3$, the conclusion follows from Proposition \ref{prop phasei} by induction on $k$.
		\vskip0.08in
		Now we assume, in addition,  that $u(r)$ is a $k$-node bound state. By Lemma \ref{lemma nodalcriticalpoint}, there exists a unique critical point $c_k>z_k$. Moreover, it follows from Proposition \ref{prop phasei} that $Q(c_k),~M(c_k),~T_2(c_k)>0$ and   $v(r)$ has a unique zero $\tau_{k+1}$ on $[c_k,+\infty)$ with  $\tau_{k+1}\in(c_k,r_{k+1})$. Since
		$\tau_{k}<r_{k} < z_{k} < c_k < r_{k+1} $, we deduce that $\tau_{k+1}$ is the unique zero of $v$ on $[z_k,+\infty)$.  Finally, we prove that $v(r)$ is strictly monotone for sufficiently large $r$ with  $$\lim\limits_{r\to\infty}|v(r)|=+\infty.$$
		By \eqref{sq2-1} and \eqref{sq3-1}, we have $Q_1(r)>0$ for all $r\ge c_k$. First we prove that $\lim_{r\to\infty}B_0(r)$ is positive (possibly $+\infty$).
		If $\tau_{k+1}\ge b_{k+1}$, then
		\begin{equation}\label{mq1}
			0<|u(r)|<\alpha_*,\quad u'v>0,\quad\forall r\in(\tau_{k+1},\infty),
		\end{equation}
		and \eqref{mq1} implies that
		\begin{equation}
			F(u(r))<0,\quad Q_n(r)\ge Q_1(r)>0,\quad\forall r\in(\tau_{k+1},\infty).
		\end{equation}
		Hence $B_0'(r)>0$ in $(\tau_{k+1},\infty)$ and
		\begin{equation}
			\lim_{r\to\infty} B_0(r) > B_0(\tau_{k+1}) = Q(\tau_{k+1}) > 0.
		\end{equation}
		If $\tau_{k+1}<b_{k+1}$, then we similarly have
		\begin{equation}
			\lim_{r\to\infty} B_0(r) > B_0(b_{k+1}) = Q(b_{k+1}) > 0.
		\end{equation}
		Since $\lim_{r\to\infty}u(r)=0$, we have $f'(u(r))<0$ for sufficiently large $r$. Because $(r^{n-1}v')' = -r^{n-1}f'(u)v$ and $v$ does not change sign for large $r$, the right-hand side is eventually of one sign; hence $r^{n-1}v'$ is eventually monotone. Consequently, $v'$ is eventually of constant sign, so $v$ is eventually monotone. Suppose that $v$ has a finite limit. Similar to the proof of \eqref{lim_of_Q}, we have
		\[
		Q(r)=r^n[u'v'+f(u)v]+(n-2)r^{n-1}u'v \to 0 \quad \text{as } r\to\infty.
		\]
		Then
		\[
		B_0(r)=Q(r)-2F(u)\frac{r^{n-1}v}{u'}=Q(r)-r^{n-2}u(\log u^2-1)\frac{ru}{u'}v \to 0,
		\]
		which contradicts $\lim_{r\to\infty}B_0(r)>0$. Hence $\lim_{r\to\infty}|v(r)| = \infty$.  
	\end{proof}
	
	With the help of Proposition \ref{prop v}, we proceed to complete the proof of Theorems \ref{Thm} and \ref{theoremmain1}. Since Theorem \ref{Thm} follows directly from Theorem \ref{theoremmain1}, it suffices to prove the latter.  In what follows,    $\mathcal{N}(\alpha)$ denotes the number of zeros of   $u(r,\alpha)$ over $(0,\infty)$. 
	By Lemmas  \ref{lemma existcriticalpoint} and \ref{lemma nodalcriticalpoint}, we have $\mathcal{N}(\alpha)=0$ for all $\alpha\leq\alpha_*$, and $\mathcal{N}(\alpha)$ is finite for each $\alpha>\alpha_*$.
	
	\begin{proof}[\bf Proof for parts (i) and (iii) of Theorem \ref{theoremmain1}] 
		Let $n\geq 2$ and  $\bar \alpha>\alpha_*$ be such that $\bar{u}(r)=u(r,\bar\alpha)$ is a ground state. Then $\mathcal{N}(\bar\alpha)=0$. We claim that
		\begin{equation}\label{ik1}
			\mathcal{N}(\alpha)=0 ~~\text{ for all}~~ \alpha\leq \bar\alpha, \quad \text{ and }\quad  \liminf\limits_{\alpha\to \bar\alpha^+}\mathcal{N}(\alpha)> \mathcal{N}(\bar\alpha)=0.
		\end{equation}
		To prove the first statement,   assume that there exists  $\tilde\alpha<\bar\alpha$ such that  $\mathcal{N}(\tilde\alpha)\geq1$.  Denote by $z(\tilde\alpha)$  the  zero of $u(r,\tilde\alpha)$. Note that $u, v\in C^1(0,\infty)$. By the Implicit Function Theorem, we  differentiate the identity $u(z(\tilde\alpha),\tilde\alpha)=0$ at $\alpha=\tilde\alpha$  and obtain
		\begin{equation}\label{derivative-zero}  u'(z(\tilde\alpha),\tilde\alpha)z'(\tilde\alpha)
			+v(z(\tilde\alpha),\tilde\alpha)=0.
		\end{equation}
		By   Proposition  \ref{prop v}, we have $u'v>0$  at $z_1(\tilde\alpha)$. Consequently, 
		\begin{equation}\label{new-mono1-ground}
			z'(\tilde\alpha)=-\frac{v(z(\tilde\alpha),\tilde\alpha)}{u'(z(\tilde\alpha),\tilde\alpha)}<0.
		\end{equation}
		Hence the   zero moves left as  $\alpha$ increases.  By continuous dependence on $\alpha$, the solution $u(r,\alpha)$ must have at least one zero for every $\alpha>\tilde\alpha$.   This   contradicts  the fact  $\mathcal{N}(\bar\alpha)=0$. Therefore, for all $\alpha\leq \bar\alpha$, we have $ \mathcal{N}(\alpha) = 0 $. This proves the first part of \eqref{ik1}.
		\vskip 0.08in
		We proceed to prove    the second statement in \eqref{ik1} by following the McLeod's arguments in \cite{M1993}.  By Lemma \ref{lemma existcriticalpoint}, $\bar{u}$ is positive and decreases to zero as $r\to+\infty$. Let $\bar{v}=v(r,\bar\alpha)$ be the variation  corresponding to $\bar{u}$. By Proposition \ref{prop v}, we have  $\bar{v},~\bar{v}'<0$ for $r$ sufficiently large and $\lim_{r\to+\infty}\bar{v}(r)=-\infty$. Choose $\bar{r}$ large enough so that
		\begin{equation}\label{new-jump-ground1}
			\bar v,~\bar{v}'<0,\quad f'(\bar{u}(r))<0,\quad\forall r\in[\bar{r},+\infty).
		\end{equation}
		Then by Proposition \ref{prop v} again,  $\bar{r}$ is behind   the last critical point of $\bar{u}$, and $\bar{u}$ is strictly decreasing on  $(\bar{r},+\infty)$. 
		For any  $\alpha>\bar\alpha$, set $w(r):=u(r)-\bar{u}(r)$. If   $\alpha$ is sufficiently close to $\bar{\alpha}$, then by continuity we have 
		\begin{equation}\label{new-jump-ground2}
			w(\bar{r})=u(\bar{r},\alpha)-u(\bar{r},\bar\alpha)=\int_{\bar{\alpha}}^\alpha v(\bar{r},t)~\mathrm{d}t<0,
		\end{equation}
		and
		\begin{equation}\label{new-jump-ground3}
			w'(\bar{r})=u'(\bar{r},\alpha)-u'(\bar{r},\bar\alpha)=\int_{\bar{\alpha}}^\alpha v'(\bar{r},t)~\mathrm{d}t<0.  
		\end{equation}
		Also, by continuity and the fact that $\bar u(\bar{r})>0$ on $\mbr{0,\bar r}$, we have $u(\bar{r})>0$ for $\alpha$ sufficiently close to $\bar\alpha$ and $u$ has no zero in $(0,\bar{r})$. Suppose, for contradiction, that  $u$ has no zero in $(\bar{r},+\infty)$.   Then $u>0$ in $(\bar{r},+\infty)$ because $u(\bar{r})>0$. By Lemma \ref{lemma oscalliate} and \ref{lemma positivenodalsolution}, either $u$ tends to zero or $u$ oscillates about 1 in $(\bar{r},+\infty)$.  Since  $w(\bar{r})<0$ and $w'(\bar{r})<0$, the function $w$ attains a negative minimum at some $\hat{r}\in(\bar{r},+\infty)$. Hence, 
		\begin{equation}
			w(\hat{r})<0, \quad w'(\hat{r})=0, \quad w''(\hat{r})\ge 0.
		\end{equation}
		Because $\bar{u}$ is decreasing in $(\bar{r},+\infty)$ and $\bar{u}(\hat{r})>u(\hat{r})>0$, we have
		\begin{equation}\label{new-jump-ground4}
			f'(u(\hat{r}))<0,\quad f'(\bar{u}(\hat{r}))<0.
		\end{equation}
		Both $u$ and $\bar{u}$ satisfy \eqref{radialform} with parameters $\alpha$ and $\bar\alpha$, respectively. Therefore, $w$ satisfies  
		\begin{equation}\label{new-jump-ground5}
			w''+\frac{n-1}{r}w'+f'(\tilde{u}(r))w=0,
		\end{equation}
		where $\tilde{u}(r)$ lies between  $u(r)$ and $\bar{u}(r)$.  Evaluating at $r=\hat{r}$ yields  $w''(\hat{r})=-f'(\tilde{u}(\hat{r}))w(\hat{r})<0$,  contradicting  $w''(\hat{r}) \geq 0$. Hence our supposition is false, and $u$ must have at least one zero in $\sbr{\bar r, \infty}$; thus $\mathcal{N}(\alpha)>0$  for all $\alpha$ sufficiently close to $\bar \alpha$ from above. Consequently, $\liminf_{\alpha\to \bar\alpha^+}\mathcal{N}(\alpha)>0$,  which proves the second part of \eqref{ik1}.
		
		\vskip 0.08in
		We now complete the proof of parts (i) and (iii) of Theorem \ref{theoremmain1}.   Bialynicki-Birula and Mycielski \cite{Bialynicki1976,Bialynicki1979} established the existence of a ground state
		\begin{equation}
			u_0(r)= \exp\sbr{-\frac{r^2}{2}+\frac{n}
				{2}}  , \quad 0\leq r <\infty,
		\end{equation}
		with      $\alpha_0=u_0(0)=e^{n/2}$. Then by \eqref{ik1} we have 
		\begin{equation} \label{iqq1}
			\mathcal{N}(\alpha)=0 ~~\text{ for all}~~ \alpha\leq  \alpha_0, \quad \text{ and }\quad  \liminf\limits_{\alpha\to  \alpha_0^+}\mathcal{N}(\alpha)>0.
		\end{equation}
		We  now prove that $u_0$ is the unique ground state of \eqref{radialform} for $n\geq 2$ by investigating the behavior of $\mathcal{N}(\alpha)$; this argument differs from the uniqueness proof for the ground state given in  \cite{avenia2014,TroyARMA2018}.  Let  $  \hat \alpha>\alpha_*$ be such that $ \hat{u}(r)=u(r,\hat\alpha)$ is another ground state with $\hat \alpha \neq \alpha_0$. Then we deduce from \eqref{ik1}   that 
		\begin{equation}\label{iqq2}
			\mathcal{N}(\alpha)=0 ~~\text{ for all}~~ \alpha\leq  \hat\alpha, \quad \text{ and }\quad  \liminf\limits_{\alpha\to  \hat\alpha^+}\mathcal{N}(\alpha)>0.
		\end{equation}
		If $\hat\alpha<\alpha_0$, then from \eqref{iqq1} we see that $\liminf_{\alpha\to \hat \alpha^+}\mathcal{N}(\alpha)= 0$,
		which   contradicts  \eqref{iqq2}. Similarly, if $\hat\alpha > \alpha_0$, then from \eqref{iqq2} we obtain   $\liminf_{\alpha\to   \alpha_0^+}\mathcal{N}(\alpha)= 0$, contradicting \eqref{iqq1}.
		Hence $u_0(r)$ is the unique ground state and part (i) of Theorem \ref{theoremmain1} is proved. 
		\vskip0.08in
		Finally, if $\alpha=1$, then the uniqueness of the solution to the ODE  implies that $u\equiv1$.  For    any $\alpha<\alpha_0$ with $\alpha\neq 1$, we see from \eqref{iqq1} that  $ \mathcal{N}(\alpha)=0$ and  that the function $u(r,\alpha)$  is a positive solution to \eqref{radialform}. Such a solution cannot be a ground state  since the unique ground state corresponds to $\alpha_0$.  Consequently, by Lemmas \ref{lemma positivenodalsolution} and  \ref{lemma oscalliate}, the function  $u$ oscillates about   $u\equiv  1$ and $\inf u>0$. Hence part (iii) of Theorem \ref{theoremmain1} is confirmed. 
	\end{proof}
	
	\begin{proof}[\bf Proof for parts (ii) and (iv) of Theorem \ref{theoremmain1}]
		For $n\geq 2$ and any $k\geq 1$, let $\alpha_k>\alpha_0$ be such that $u_k(r)=u(r,\alpha_k)$ is a $k$-node bound state with zeros $0<z_1<z_2<\cdots<z_k<\infty$. Then $\mathcal{N}(\alpha_k)=k$. The existence of such $\alpha_k$  follows from the fact that \eqref{radialform} admits bound states with any prescribed number of zeros (see Shuai \cite{W.ShuaiNonlinearity2019}). 
		By Lemma \ref{lemma nodalcriticalpoint}, for $1\leq i \leq k-1$  there exists   exactly one critical point $c_i\in \sbr{z_i,z_{i+1}}$  and one critical point  $c_k$ in $(z_k, \infty)$ beyond which $|u_k(r)|$ decreases strictly.   At each critical point, $|u_k|>\alpha_*=e^{1/2}$. The asymptotic properties \eqref{decay1} and \eqref{decay2} follow  directly from  Lemma \ref{lemma decay}.

		\vskip0.08in
		When $n=2$, Cort\'azar,   Garc\'ia-Huidobro and  Yarur \cite[Theorem 1.1]{Cortazar2011} proved that the sequence   $\{\alpha_k\}_{k=1}^\infty$  satisfies  \eqref{increasing prop}
		and that  $u_k$ is the unique bound state of \eqref{radialform} with 
		precisely $k$ zeros. Moreover, they showed that for any 
		$\alpha \in (\alpha_k,\alpha_{k+1})$, 
		$u(r)=u(r,\alpha)$ is a nodal solution with exactly $k+1$ zeros. 
		By Lemma~\ref{lemma positivenodalsolution}, such a solution oscillates 
		about $u\equiv 1$ or $u\equiv -1$ after its last zero. 
		
		\vskip0.08in
		It remains to consider the case $n\geq3$.  Let $\bar \alpha>\alpha_0$ be such that $\bar{u}(r)=u(r,\bar\alpha)$ is a nodal solution  with exactly $k$ zeros  $z_1(\bar\alpha)<\cdots<z_k(\bar\alpha)$. 
		We claim that
		\begin{equation}\label{new monotonicity prop}
			\mathcal{N}(\alpha)\geq\mathcal{N}(\bar\alpha),\quad\forall \alpha\geq\bar\alpha.
		\end{equation}  
		For  $\alpha>\bar\alpha$, let $z(\alpha)$ be any zero of $u(r)=u(r,\alpha)$. Still by the Implicit Function Theorem, differentiating $u(z(\alpha),\alpha)=0$ gives 
		\begin{equation}
			u'(z(\alpha),\alpha)z'(\alpha)+v(z(\alpha),\alpha)=0.
		\end{equation}
		Since $u'v>0$ wherever $u=0$ by Proposition \ref{prop v}, we obtain
		\begin{equation}\label{new-mono1}
			z'(\alpha)=-\frac{v(z(\alpha),\alpha)}{u'(z(\alpha),\alpha)}<0.
		\end{equation}
		Thus each zero moves strictly leftward as $\alpha$ increases. In particular, for $\alpha$  slightly larger than $\bar\alpha$,   the solution $u(r,\alpha)$ still has at least $k$ zeros $z_1(\alpha)<z_1(\bar \alpha)$, $\ldots$ ,$z_k(\alpha)<z_k(\bar \alpha)$.   Hence $\mathcal{N}(\alpha)\geq\mathcal{N}(\bar{\alpha})$,  and \eqref{new monotonicity prop} holds.

		\vskip 0.08in
		By Lemma \ref{lemma positivenodalsolution},  the nodal solution $\bar{u}(r)=u(r,\bar\alpha)$ is either a bound state or an oscillatory solution. If $\bar{u}$ is  a  bound state, then we see from Proposition \ref{prop v} that  $\bar v(r)=v(r,\bar \alpha)$ is strictly monotone for sufficiently large $r$ with  $\lim_{r\to\infty}|\bar v(r)|=+\infty$.  By a similar argument, with slight modifications, to the one used in the proof of \eqref{ik1},  we have 
		\begin{equation}\label{new-jump-prop}
			\liminf\limits_{\alpha\to\bar\alpha^+}\mathcal{N}(\alpha)>\mathcal{N}(\bar\alpha).
		\end{equation}
		On the other hand,    if $\bar{u}$ is  an oscillatory solution, then $\mathcal{N}(\alpha)$ remains  constant in a neighborhood of $\bar\alpha$.  That is, 
		\begin{equation}\label{new-pausing-property}
			\mathcal{N}(\alpha)=\mathcal{N}(\bar\alpha), \quad \text{ for  } ~\alpha ~\text{ sufficiently close to } ~\bar\alpha.
		\end{equation}
		Indeed, since $\bar{u}$ is an oscillatory solution, by Lemma \ref{lemma oscalliate} we can choose $\bar{r}$ to be a critical point of $\bar{u}$ such that $|\bar{u}(\bar{r})|<1$. By Lemma \ref{lemma nodalcriticalpoint}, $\bar{r}$ lies behind the last zero of $\bar{u}$, and $\bar{E}$ is negative at $\bar{r}$, where $\bar E$ is the energy functional \eqref{defi of E} of $\bar u$. By continuity, for $\alpha$ sufficiently close to $\bar\alpha$, the solution $u(r,\alpha)$ has the same number of zeros as $\bar{u}$ does in $(0,\bar{r}]$, and its energy $E$ is negative at $\bar{r}$ as well. Hence by Lemma \ref{lemma oscalliate}, $u$ has no zeros in $(\bar{r},+\infty)$ and thus $\mathcal{N}(\alpha)=\mathcal{N}(\bar\alpha)$.
		\vskip 0.08in
		With the help of \eqref{new monotonicity prop}, \eqref{new-jump-prop} and \eqref{new-pausing-property}, we have
		\begin{equation} \label{e11}
			\mathcal{N}(\alpha)\geq k+1,\quad\forall\alpha>\alpha_k;\quad\mathcal{N}(\alpha)\leq k,\quad\forall\alpha<\alpha_k.
		\end{equation}
		Consequently, $u_k$ is the unique $k$-node bound state, and   $\alpha_0<  \alpha_1<\alpha_2<\cdots  $. Because $ \mathcal{N}(\alpha)$ is finite for all $\alpha>0$, we have $\lim_{k\to \infty} \alpha_k=\infty$. For each  
		$\alpha \in (\alpha_k,\alpha_{k+1})$, we see from \eqref{e11} that  $ \mathcal{N}(\alpha)= k+1$; thus,
		$u(r)=u(r,\alpha)$ is a nodal solution with exactly $k+1$ zeros. By \eqref{new-jump-prop}, such solution cannot be a bound state. Lemmas \ref{lemma positivenodalsolution} and \ref{lemma nodalcriticalpoint} imply that  $u$ oscillates about $u\equiv1$ or $u\equiv-1$ behind its last zero. Thus we complete the proof of part (ii) and (iv) of Theorem \ref{theoremmain1}.     
	\end{proof}

	\medskip
	
	{\small \noindent \textbf{Acknowledgements:} This work is funded by National Key R\&D Program of China (Grant 2023YFA1010001) and NSFC(12171265).  Liu is supported by the National Funded Postdoctoral Researcher Program (GZB20240945)  and China Postdoctoral Science Foundation (2025M784442).
	}
	
	\medskip
	
	{\small \noindent \textbf{Statements and Declarations:} The authors have no relevant financial or non-financial interests to disclose.}
	
	\medskip
	
	{\small \noindent \textbf{Data availability:} Data sharing is not applicable to this article as no datasets were generated or analysed during the current study.}

\end{document}